\documentclass[12pt]{amsart}
\usepackage{bbm}
\usepackage{xcolor}
\usepackage[normalem]{ulem}
\usepackage{yfonts}

\marginparwidth=50pt

\usepackage[bookmarks,colorlinks]{hyperref}
\usepackage{amssymb}
\textwidth 145mm

\pagestyle{headings}

\theoremstyle{plain}
\newtheorem{theorem}{Theorem}[section]

\newtheorem{lemma}[theorem]{Lemma}
\newtheorem{proposition}[theorem]{Proposition}

\theoremstyle{definition}
\newtheorem{definition}{Definition}[section]

\theoremstyle{remark}
\newtheorem*{rem*}{Remark}

\sloppy

\makeatletter
\@addtoreset{equation}{section}
\makeatother

\date{\today}
\author[Wang]{Dinghuai Wang}
\address{School of Mathematics and Statistics, Anhui Normal University, Wuhu, 241002, China}
\email{Wangdh1990@126.com}
\author[Hu]{Xi Hu}
\address{School of Mathematical Sciences, Beijing Normal University,
 Laboratory of Mathematics and Complex Systems, Ministry of Education, Beijing, 100875, China.}
 \email{huximath1994@pku.edu.cn}
\author[Qi]{Shuai Qi}
\address{ School of Mathematical Sciences, Beijing Normal University, Laboratory of Mathematics and Complex Systems, Ministry of Education, Beijing, 100875, China}
\email{qshuai@bnu.edu.cn}

\thanks{This work was supported by National Natural Science Foundation of China(Nos. 12101010,11771023) and Natural Science Foundation of China of Anhui Province (No. 2108085QA19).}

\subjclass[2010]{Primary 46B50, 46E30; Secondary 42B20}

\keywords{Characterization; Commutator; Endpoint theory; Minkowski-type inequality; Relative compactness.}

\begin{document}
	\title[Endpoint theory for the compactness of commutators]{Endpoint theory for the compactness of commutators}

	\begin{abstract} In this paper, we establish a Minkowski-type inequality for weak Lebesgue space, which allows us to obtain a characterization of relative compactness in these spaces. Furthermore, we are the first to investigate the compactness results of commutators at the endpoint.
The paper provides a comprehensive study of the compactness properties of commutators of Calder\'{o}n-Zygmund operators in Hardy and $L^{1}(\mathbb{R}^n)$ type spaces.
Additionally, we provide factorization theorems for Hardy spaces in terms of singular integral operators in the $L^1(\mathbb{R}^n)$ space.
	\end{abstract}\
	\maketitle
	
\section{Introduction and Statement of Main Results}

The endpoint theory for the compactness of commutators has been missing until now, and this paper aims to fill that gap. The main objective of this study is to investigate the compactness properties of commutators of singular integral operators with suitable symbols. In particular, we derive a Minkowski-type inequality for weak Lebesgue spaces and establish the characterization of relative compactness in these spaces.

We briefly summarize some classical and recent works in the literature, which lead to the results presented here (precise definitions are given in the next section). Let's define $b$ as a locally integrable function on $\mathbb{R}^n$ and $T$ as a Calder\'{o}n-Zygmund singular integral operator. Now, let's look at the commutator operator $[b,T]$, which is defined for smooth functions $f$ as
$$[b,T]f=bT(f)-T(bf).$$
It is a well-established fact that commutators of singular integrals with a multiplication by a measurable function $b$ are bounded operators on $L^p(\mathbb{R}^n), 1<p<\infty$, provided $b$ is a BMO function \cite{CRW1976}. Furthermore, if the commutator of all Riesz transforms
$$R_{j}f(x)={\rm p.v.}\int_{\mathbb{R}^n}\frac{x_{j}-y_{j}}{|x-y|^{n+1}}f(y)dy, 1\leq j\leq n,$$ are bounded for some $p, 1 < p <\infty$, then the function $b$ must necessarily be in BMO. In \cite{J1978}, Janson advanced this result by demonstrating that it is sufficient to assume the boundedness of the commutator $[b, R_{j}]$ for some $j\in 1,2, \cdots, m$. Commutators of more general singular integral operators were considered by Uchiyama in \cite{U1, U2}, Li in \cite{LSY} and Wang in \cite{W-SM}. These findings have had significant applications in other areas of operator theory and partial differential equations. For instance, the investigation of div-curl lemmas \cite{CLMS1993, LPPW2012} and additional interpretations in operator theory \cite{NPTV2002,N1957} have stemmed from this work.
This theory has since been expanded and generalized in several directions. The concept of the smoothing effect of commutators of linear operators is a well-established and highly beneficial principle, where smoothing refers to the enhancement of boundedness to a more robust condition of compactness. Uchiyama \cite{U2}, in 1978, honed the boundedness results related to commutators to compactness. This refinement was achieved by mandating the commutator with the symbol to reside in ${\rm CMO}(\mathbb{R}^n)$, which is the closure in ${\rm BMO}(\mathbb{R}^n)$ of the space of $C^{\infty}$ functions with compact support. It is demonstrated that $[b,T]$ holds compactness on $L^{p}(\mathbb{R}^n)$ for any $p\in (1,\infty)$ if and only if $b\in {\rm CMO}(\mathbb{R}^n)$. In addition to this, Uchiyama posited an equivalent characterization of ${\rm CMO}(\mathbb{R}^n)$ in terms of the mean value oscillation of functions, which serves a pivotal role in the compactness characterization of $[b, T]$ on $L^{p}(\mathbb{R}^n)$.

For the case $p=1$, P\'{e}rez addressed the boundedness of commutators of singular integrals with ${\rm BMO}(\mathbb{R}^n)$ symbols in \cite{P1995}. Specifically, he showed that the commutator of a singular integral with a ${\rm BMO}(\mathbb{R}^n)$ symbol is not bounded from $L^1(\mathbb{R}^n)$ onto $L^{1,\infty}(\mathbb{R}^n)$. However, they do satisfy the modular inequality of the $L\log L(\mathbb{R}^n)$ type. P\'{e}rez also introduced a subspace of $H^1(\mathbb{R}^n)$  for which $[b, T]$ is a bounded operator, indicating that these commutators are more singular operators than Calder\'{o}n-Zygmund operators. It was also proven that $[b, T]$ is not of $(H^1(\mathbb{R}^n),L^1(\mathbb{R}^n))$ type unless $b$ is constant. Instead, Chen and Hu in \cite{CH2001} proved the weak $(H^1(\mathbb{R}^n),L^1(\mathbb{R}^n))$ boundedness.

However, the compactness of commutators in the $L^1(\mathbb{R}^n)$ type and Hardy spaces is still missing. The main goal of this work is to show the compactness of commutators of singular integrals at the endpoint. Simple examples (cf. Section 6) are provided to show that the commutator $[b, T]$ fails to be of type $(L\log L(\mathbb{R}^n), L^{1,\infty}(\mathbb{R}^n))$ \big(or $(H^{1}(\mathbb{R}^n), L^{1,\infty}(\mathbb{R}^n))\big)$ compact operators when $b$ is in ${\rm CMO}(\mathbb{R}^n)$. The study of relatively compact sets in weak Lebesgue spaces is necessary for this purpose, and the theorem obtained provides an important improvement of the known results. We obtain the following theorem:

\begin{theorem}\label{WLP}
Let $0<p<\infty$. A subset $\mathcal{F}$ of $L^{p,\infty}(\mathbb{R}^n)$ is relatively compact if and only if the following three conditions hold:
\begin{enumerate}
  \item[(i)] norm boundedness uniformly
  \begin{equation}\label{WLP1}
  \sup_{f\in \mathcal{F}}\|f\|_{L^{p,\infty}(\mathbb{R}^n)}<\infty;
  \end{equation}
  \item[(ii)] translation continuty uniformly
  \begin{equation}\label{WLP2}
\lim_{a\rightarrow 0}\sup_{y\in B(O,a)}\|f(\cdot+y)-f(\cdot)\|_{L^{p,\infty}(\mathbb{R}^n)}=0 \ \text{uniformly in} f\in \mathcal{F};
  \end{equation}
  \item[(iii)] control uniformly away from the origin
    \begin{equation}\label{WLP3}
\lim_{A \rightarrow \infty}\|f\chi_{E_{A}}\|_{L^{p,\infty}(\mathbb{R}^n)}=0 \  \text{uniformly in} f\in \mathcal{F},
  \end{equation}
  where $E_{A}=\{x\in \mathbb{R}^n:|x|>A\}$.
\end{enumerate}
\end{theorem}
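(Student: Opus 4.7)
The plan is to follow the pattern of the classical Kolmogorov-Riesz-Fr\'{e}chet characterization of compactness in $L^p$, adapted to the quasi-normed setting of $L^{p,\infty}(\mathbb{R}^n)$ by means of the Minkowski-type inequality for weak Lebesgue spaces established earlier in the paper. The argument naturally splits into necessity and sufficiency.

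For necessity, (i) is immediate because a relatively compact subset of a quasi-normed space is norm-bounded. For (ii) and (iii), fix $\epsilon>0$, extract a finite $\epsilon$-net $\{f_{1},\dots,f_{N}\}\subset\mathcal{F}$, and for each $f\in\mathcal{F}$ choose $f_{i}$ with $\|f-f_{i}\|_{L^{p,\infty}}<\epsilon$. The quasi-triangle inequality and translation invariance of the quasi-norm give
$$\|f(\cdot+y)-f(\cdot)\|_{L^{p,\infty}}\le C_{p}\bigl(2\epsilon+\max_{1\le i\le N}\|f_{i}(\cdot+y)-f_{i}(\cdot)\|_{L^{p,\infty}}\bigr),$$
and the analogous inequality for the tail $\|f\chi_{E_{A}}\|_{L^{p,\infty}}$. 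It then suffices to verify translation continuity and tail decay for each fixed $f_{i}$ individually; this I would obtain by approximating each $f_{i}$ within error $\epsilon$ by a smooth, compactly supported function, for which both properties are classical, and pushing the approximation back through the quasi-triangle inequality.

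For sufficiency, the goal is to show total boundedness of $\mathcal{F}$ by an explicit $\epsilon$-net construction. Given $\epsilon>0$, use (iii) to pick $A$ with $\sup_{f\in\mathcal{F}}\|f\chi_{E_{A}}\|_{L^{p,\infty}}<\epsilon$ and (ii) to pick $\delta>0$ with $\sup_{|y|\le\delta,\,f\in\mathcal{F}}\|f(\cdot+y)-f(\cdot)\|_{L^{p,\infty}}<\epsilon$. Let $\phi_{\delta}$ be a standard mollifier supported in $B(O,\delta)$. Writing
$$f*\phi_{\delta}(x)-f(x)=\int\phi_{\delta}(y)\bigl(f(x-y)-f(x)\bigr)\,{\rm d}y,$$
the Minkowski-type inequality for $L^{p,\infty}$ (the key earlier result of the paper) lets me pull the quasi-norm inside the integral, yielding $\|f*\phi_{\delta}-f\|_{L^{p,\infty}}\lesssim\epsilon$ uniformly in $f\in\mathcal{F}$. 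The truncated family $\{(f\chi_{B(O,2A)})*\phi_{\delta}:f\in\mathcal{F}\}$, viewed on $\overline{B(O,2A)}$, is uniformly bounded and equicontinuous (the former from (i) together with a pointwise bound on the convolution, the latter from the smoothness of $\phi_{\delta}$). Arzel\`{a}-Ascoli then produces a finite $\epsilon$-net in $C(\overline{B(O,2A)})$, which transfers to a finite $\epsilon$-net in $L^{p,\infty}(B(O,2A))$ through the bounded inclusion $L^{\infty}(B)\hookrightarrow L^{p,\infty}(B)$ on any ball $B$. Combining the three approximations (tail via (iii), mollification via (ii) and the Minkowski-type inequality, Arzel\`{a}-Ascoli) through the quasi-triangle inequality then produces a finite $O(\epsilon)$-net for $\mathcal{F}$ inside $L^{p,\infty}(\mathbb{R}^{n})$.

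The principal obstacle, flagged already by the abstract, is the mollification step in the sufficiency direction: Minkowski's continuous integral inequality fails in $L^{p,\infty}$, so quantifying $\|f*\phi_{\delta}-f\|_{L^{p,\infty}}$ in terms of $\sup_{|y|\le\delta}\|f(\cdot+y)-f(\cdot)\|_{L^{p,\infty}}$ requires precisely the Minkowski-type substitute developed earlier. A secondary technical nuisance is the full range $0<p<\infty$: for $0<p\le 1$, $L^{p,\infty}$ is only quasi-normed, so every use of the triangle inequality must be tracked through $p$-th powers using $\|f+g\|_{L^{p,\infty}}^{p}\le 2^{p}\bigl(\|f\|_{L^{p,\infty}}^{p}+\|g\|_{L^{p,\infty}}^{p}\bigr)$ rather than direct subadditivity.
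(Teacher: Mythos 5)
Your sufficiency argument for $1<p<\infty$ tracks the paper's closely: the paper uses the averaging operator $S_af(x)=|B(0,a)|^{-1}\int_{|y|\le a}f(x+y)\,dy$ where you use a general mollifier $\phi_{\delta}$, and both invoke the Minkowski-type inequality for $L^{p,\infty}$ to bound the mollification error by $\sup_{|y|\le a}\|f(\cdot+y)-f(\cdot)\|_{L^{p,\infty}}$, after which Arzel\`a--Ascoli finishes. But you gloss over the mechanism that makes the Arzel\`a--Ascoli step legitimate: to get the pointwise bound $|S_af(x)|\lesssim\|f\|_{L^{p,\infty}}$ and the equicontinuity estimate $|S_af(x_1)-S_af(x_2)|\lesssim\|f(\cdot+x_2-x_1)-f(\cdot)\|_{L^{p,\infty}}$ one must first pass from $L^{p,\infty}$ to local $L^q$ with $1\le q<p$ via Kolmogorov's inequality; $\|f\|_{L^{p,\infty}}$ alone does not give you pointwise values of a convolution. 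The paper does this explicitly, and you will need it too.

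The genuine gap is the range $0<p\le 1$. You diagnose the difficulty there as merely tracking the quasi-triangle constant, but the obstacle is structural: for $p\le 1$, a function in $L^{p,\infty}(\mathbb{R}^n)$ need not be locally integrable (take $|x|^{-n/p}\chi_{B(0,1)}$), so $S_af$ and $f*\phi_{\delta}$ are undefined, the Jensen/H\"older step producing the pointwise bound requires $1\le q<p$ and so fails, and the Minkowski-type inequality in the appendix is proved only for $1<p<\infty$. The mollification strategy therefore cannot run at all in that range. The paper handles $0<p\le 1$ by a different device, which your plan is missing: reduce to nonnegative $f$, pass to $\mathcal{F}^{p/2}=\{f^{p/2}:f\in\mathcal{F}\}$, check via the elementary pointwise bound $|a^{p/2}-b^{p/2}|\le|a-b|^{p/2}$ that $\mathcal{F}^{p/2}$ inherits the three hypotheses in $L^{2,\infty}$, conclude relative compactness of $\mathcal{F}^{p/2}$ from the case already done, and then transfer relative compactness back to $\mathcal{F}$ in $L^{p,\infty}$ using the abstract characterization (Lemma~\ref{RC}: local relative compactness in measure plus uniformly absolutely continuous quasi-norm).

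On necessity, your $\epsilon$-net reduction plus approximation of each $f_i$ by $C_c^{\infty}$ functions is a legitimate alternative to the paper's three contradiction arguments and would yield the same conclusion, but note that $C_c^{\infty}$ is \emph{not} dense in $L^{p,\infty}(\mathbb{R}^n)$; the closure of $C_c^{\infty}$ is exactly the subspace of functions with absolutely continuous quasi-norm. So the step ``approximate $f_i$ within $\epsilon$ by a smooth compactly supported $g$'' silently uses that elements of a relatively compact family lie in that subspace (again Lemma~\ref{RC}); you should surface this, since it is precisely the nontrivial weak-type feature distinguishing $L^{p,\infty}$ from $L^p$.
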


Also, P\'{e}rez in \cite{P1995} considered endpoint estimates related to Hardy-types spaces and introduced a subspace of $H^{1}(\mathbb{R}^n)$ for which $[b,T]$ is a bounded operator. Now, we give the corresponding compactness result as follows.

\begin{theorem}\label{thmHardyb}
Assuming that $b\in {\rm CMO}(\mathbb{R}^n)$ and $T$ is a Calder\'{o}n-Zygmund operator, then the commutator $[b,T]$ is a compact operator mapping from $H^{1}_{b}(\mathbb{R}^n)$ to $L^{1}(\mathbb{R}^n)$.
\end{theorem}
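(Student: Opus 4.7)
The plan is to combine the atomic decomposition of $H^1_b(\mathbb{R}^n)$, the classical Kolmogorov--Riesz compactness criterion in $L^1(\mathbb{R}^n)$, and the density of $C_c^\infty$ in ${\rm CMO}$. Since each $f\in H^1_b$ decomposes as $\sum_j\lambda_j a_j$ with $\sum_j|\lambda_j|\lesssim\|f\|_{H^1_b}$ and each $a_j$ a $b$-atom, and since P\'{e}rez's boundedness theorem is already at our disposal, it suffices to prove that $\{[b,T]a : a\text{ is a }b\text{-atom}\}$ is relatively compact in $L^1(\mathbb{R}^n)$. By Kolmogorov--Riesz this reduces to three uniform conditions over atoms $a$: $L^1$-boundedness (which is P\'{e}rez), equicontinuity of translates, and tightness at infinity.

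To handle equicontinuity and tightness I would approximate the symbol. Given $\varepsilon>0$, pick $b_\varepsilon\in C_c^\infty(\mathbb{R}^n)$ with $\|b-b_\varepsilon\|_{{\rm BMO}}<\varepsilon$, and write
\[
[b,T]a = [b_\varepsilon,T]a + [b-b_\varepsilon,T]a.
\]
For the smooth piece, the Lipschitz bound $|b_\varepsilon(x)-b_\varepsilon(y)|\le\|\nabla b_\varepsilon\|_\infty|x-y|$ upgrades the commutator kernel by one order, so that combined with $\int a=0$ and $\|a\|_\infty\le|Q|^{-1}$ one obtains a pointwise estimate on $[b_\varepsilon,T]a(x)$ decaying like $r(r+|x-x_0|)^{-(n+1)}$ outside a fixed dilate of the supporting cube $Q(x_0,r)$. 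This decay, together with the standard kernel continuity estimate for $T$, yields equicontinuity and tightness for $[b_\varepsilon,T]a$ uniformly over atoms. For the error, an $L^1$ bound
\[
\|[b-b_\varepsilon,T]a\|_{L^1(\mathbb{R}^n)}\lesssim\|b-b_\varepsilon\|_{{\rm BMO}}<C\varepsilon,
\]
uniform in $b$-atoms $a$, forces its contribution to equicontinuity and tightness to be $O(\varepsilon)$; sending $\varepsilon\to 0$ closes the argument.

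The main obstacle is the last $L^1$ estimate, because an $H^1_b$-atom is generally \emph{not} an atom of $H^1_{b-b_\varepsilon}$: the extra cancellation $\int a\cdot b=0$ defining the space $H^1_b$ does not pass to $\int a\cdot (b-b_\varepsilon)=0$, so one cannot simply invoke P\'{e}rez's theorem as a black box with symbol $b-b_\varepsilon$. I would overcome this by revisiting P\'{e}rez's argument and verifying that on the near field $2Q$ only $\int a=0$ plus a John--Nirenberg bound on $b-b_\varepsilon$ are actually used, while on the far field the Calder\'{o}n--Zygmund kernel smoothness suffices; the role of the additional cancellation $\int a\cdot b=0$ in the original proof is to tame a logarithmically divergent term, and in the error estimate that sharpening is replaced by the smallness of $\|b-b_\varepsilon\|_{{\rm BMO}}$, which yields the operator norm of the required order. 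A secondary technical point is that the bounds must be uniform across the scale $r$ of the supporting cube, which is why the decomposition into near and far fields of $Q$ must be carried out carefully and combined with the dyadic structure of BMO.
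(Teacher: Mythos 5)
Your overall strategy---atomic decomposition of $H^1_b(\mathbb{R}^n)$, Riesz--Kolmogorov in $L^1$, approximation of $b$ by $b_\varepsilon\in C_c^\infty$ in BMO, and the split $[b,T]=[b_\varepsilon,T]+[b-b_\varepsilon,T]$---is the same as the paper's, and you are right to flag the key obstruction, which the paper itself passes over too quickly: a $b$-atom $a$ satisfies $\int ab=0$ but in general $\int ab_\varepsilon\neq 0$, so $a$ is \emph{not} a $(b-b_\varepsilon)$-atom and P\'{e}rez's $H^1_c\to L^1$ bound cannot be applied with $c=b-b_\varepsilon$.

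The repair you propose, however, does not close the gap. You claim the cancellation $\int ab=0$ only ``tames a logarithmically divergent term'' and that its absence can be traded for smallness of $\|b-b_\varepsilon\|_{\rm BMO}$; that trade is impossible because the divergence is an honest infinity, not a large constant. Writing $x_0$ for the center of the supporting cube $Q$, the far-field term killed by the extra cancellation is
\[
K(x,x_0)\int_Q\bigl(b(y)-b_Q\bigr)a(y)\,dy=K(x,x_0)\int_Q b(y)a(y)\,dy,
\]
which vanishes identically for a $b$-atom. For the error symbol $b-b_\varepsilon$ applied to a $b$-atom, the analogous term is $K(x,x_0)\int b_\varepsilon a$, and all you know is $\bigl|\int b_\varepsilon a\bigr|\le\varepsilon$; the resulting tail $\varepsilon|K(x,x_0)|\sim\varepsilon|x-x_0|^{-n}$ has $\int_{(2Q)^c}\varepsilon|x-x_0|^{-n}\,dx=+\infty$ for \emph{every} $\varepsilon>0$. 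Thus $[b-b_\varepsilon,T]a$ is generically not even in $L^1$, and the claimed bound $\|[b-b_\varepsilon,T]a\|_{L^1}\lesssim\varepsilon$ is false. For the same reason your pointwise decay $r(r+|x-x_0|)^{-(n+1)}$ for $[b_\varepsilon,T]a$ fails far from $\mathrm{supp}\,b_\varepsilon$, where $[b_\varepsilon,T]a(x)=-T(b_\varepsilon a)(x)$ has leading behaviour $\bigl(\int b_\varepsilon a\bigr)K(x,x_0)\sim\varepsilon|x-x_0|^{-n}$. (The paper's proof contains the identical slip: it records only $\bigl|\int ab_\varepsilon\bigr|\le\varepsilon$, yet in verifying tightness it subtracts $\int K(x,y_0)b_\varepsilon a\,dy$ as though that integral vanished, to obtain $|x|^{-n-\gamma}$ decay.) A correct argument must restore exact cancellation before splitting the symbol---for instance, replace $a$ by $a-\eta$ for a small, scale-compatible $H^1_b$-controlled correction $\eta$ with $\int (a-\eta)\,b_\varepsilon=0$---or avoid the split altogether and extract the Riesz--Kolmogorov conditions for $[b,T]a$ directly from the full $b$-atom cancellation together with the three vanishing conditions defining ${\rm CMO}$.
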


Alternatively, Janson \cite{J1978} in 1978 employed a Fourier expansion technique to establish that for $0<\alpha<1$, $b\in Lip_{\alpha}(\mathbb{R}^n)$ if and only if the commutator $[b,T]$ with a smooth kernel is bounded from $L^{p}(\mathbb{R}^n)$ to $L^{q}(\mathbb{R}^n)$ for $1<p<q<\infty$ with $1/q=1/p-\alpha/n$. In a recent study, Nogayamaand and Sawano in \cite{NS2017}, Guo, He, Wu, and Yang \cite{GHWY} explored a ${\rm CMO}$ type space ${\rm CMO}_{\alpha}(\mathbb{R}^n)$ and the compactness of commutators of singular or fractional integral operators, where $1<p<q<\infty$ and $1/q=1/p-\alpha/n$. We extend their work to establish the corresponding compactness results for the commutator in $L^{1}(\mathbb{R}^n)$ and Hardy spaces.

\begin{theorem}\label{thmHardy}
Assuming that $0<\alpha<1$, $b\in {\rm CMO}_{\alpha}(\mathbb{R}^n)$, and $T$ is a Calder\'{o}n-Zygmund operator, then the commutator $[b,T]$ is a compact operator mapping from $H^{1}(\mathbb{R}^n)$ to $L^{\frac{n}{n-\alpha}}(\mathbb{R}^n)$.
\end{theorem}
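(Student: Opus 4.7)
The plan is to combine a density reduction with the Kolmogorov-Riesz-Fr\'echet compactness criterion in $L^{q}(\mathbb{R}^n)$, where $q := n/(n-\alpha)$. Since, by construction, ${\rm CMO}_\alpha(\mathbb{R}^n)$ is the closure of $C_c^\infty(\mathbb{R}^n)$ in the ${\rm Lip}_\alpha$-type norm, and since the quantitative boundedness estimate
\[
\|[b,T]\|_{H^1(\mathbb{R}^n) \to L^q(\mathbb{R}^n)} \lesssim \|b\|_{{\rm Lip}_\alpha}
\]
may be obtained by adapting the $L^p \to L^q$ arguments of Nogayama-Sawano and Guo-He-Wu-Yang to the $H^1$-endpoint through the atomic decomposition of $H^1$, choosing $b_k \in C_c^\infty$ with $b_k \to b$ in ${\rm Lip}_\alpha$ yields $[b_k,T] \to [b,T]$ in operator norm. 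As compact operators form a norm-closed subspace of bounded linear operators, the problem reduces to proving compactness of $[b,T]$ for a fixed $b \in C_c^\infty$ with ${\rm supp}\, b \subset B(0,R)$.

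For such a $b$, I would verify that $\mathcal{G} := \{[b,T]f : \|f\|_{H^1(\mathbb{R}^n)} \leq 1\}$ satisfies in $L^q(\mathbb{R}^n)$ the three hypotheses of the Kolmogorov-Riesz-Fr\'echet theorem: (a) uniform $L^q$-boundedness, (b) uniform translation continuity, and (c) uniform decay at infinity. Condition (a) is immediate from the operator estimate above. Condition (c) follows from the compact support of $b$: for $|x| > 2R$ one has $[b,T]f(x) = -T(bf)(x)$, and the Calder\'on-Zygmund kernel bound $|K(x,y)| \lesssim |x-y|^{-n}$ together with $|x-y| \gtrsim |x|$ for $y \in {\rm supp}\, b$ gives the pointwise estimate $|[b,T]f(x)| \lesssim |x|^{-n}\|b\|_\infty \|f\|_{L^1}$ whenever $\|f\|_{H^1} \leq 1$, whence $\int_{|x|>A} |[b,T]f|^q\,dx \lesssim A^{n-nq} \to 0$.

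The principal obstacle is condition (b), namely $\sup_{\|f\|_{H^1}\leq 1}\|[b,T]f(\cdot+h) - [b,T]f\|_{L^q} \to 0$ as $|h| \to 0$. I would split the kernel $K_b(x,y) := (b(x)-b(y))K(x,y)$ via a smooth radial cutoff $\eta_\delta$ equal to $1$ on $\{|x-y| \leq \delta\}$ and supported in $\{|x-y| \leq 2\delta\}$, writing $[b,T] = S_\delta + R_\delta$ with $S_\delta$ the inner piece. On the inner region, $|b(x)-b(y)| \leq \|\nabla b\|_\infty |x-y|$ combined with $|x-y|\leq 2\delta$ yields $|b(x)-b(y)| \lesssim \delta^{1-\alpha}|x-y|^\alpha$, so an atom-by-atom analysis of $S_\delta$, in conjunction with the $H^1 \to L^q$ boundedness of the fractional integral $I_\alpha$, should give
\[
\|S_\delta\|_{H^1 \to L^q} \lesssim \delta^\beta
\]
for some $\beta>0$. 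On the complementary outer piece $R_\delta$ the kernel is $C^1$ in $x$ away from the diagonal with modulus depending on $\delta$; coupling the Calder\'on-Zygmund regularity $|K(x+h,y)-K(x,y)| \lesssim |h|/|x-y|^{n+1}$ with the smoothness of $b$ then yields $\|R_\delta f(\cdot+h) - R_\delta f\|_{L^q} \lesssim |h|\,\delta^{-N}\|f\|_{H^1}$ for some $N = N(n,\alpha)$. Balancing $\delta$ as a suitable power of $|h|$ drives both contributions to zero uniformly in $f \in \mathcal{G}$, establishing (b) and completing the verification of the Kolmogorov-Riesz-Fr\'echet hypotheses. The most delicate step is the quantitative inner-piece bound $\|S_\delta\|_{H^1 \to L^q} \lesssim \delta^\beta$, which requires careful tracking of the $\delta$-dependence through the atomic decomposition of $H^1$ and the mapping properties of $I_\alpha$.
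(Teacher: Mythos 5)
Your proposal follows the same blueprint as the paper's proof: reduce by density to symbols $b\in C_c^\infty(\mathbb{R}^n)$ (using the quantitative bound $\|[b,T]\|_{H^1\to L^q}\lesssim\|b\|_{{\rm Lip}_\alpha}$, which the paper takes from Lu--Wu--Yang rather than from the references you cite), then verify the three Kolmogorov--Riesz conditions, with decay at infinity handled by the compact support of $b$ exactly as you describe. The translation-continuity step is where the two arguments diverge, and only mildly: the paper splits the pointwise difference $[b_\epsilon,T]f(x+t)-[b_\epsilon,T]f(x)$ into four pieces $J_1,\dots,J_4$ at the moving threshold $|x-y|=\delta^{-1}|t|$, while you decompose the operator at a fixed scale $\delta$ via a smooth cutoff and optimize $\delta$ as a function of $|h|$ at the end; both ultimately reduce to the same truncated-kernel estimates. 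One caution about your inner piece: invoking the $H^1\to L^q$ boundedness of $I_\alpha$ is a bit of a red herring, and potentially dangerous, since the natural pointwise bound $|S_\delta f|\lesssim\delta^{1-\alpha}I_\alpha(|f|)$ involves $|f|$, which is not an $H^1$ function even when $f$ is. What actually closes the estimate is that the truncated kernel $|z|^{1-n}\chi_{\{|z|\leq 2\delta\}}$ (after pulling out $\|\nabla b\|_\infty$) lies in $L^q(\mathbb{R}^n)$ with norm $\approx\delta^{1-\alpha}$, so Young's convolution inequality gives $\|S_\delta\|_{L^1\to L^q}\lesssim\delta^{1-\alpha}$ directly; no cancellation, atomic decomposition, or $I_\alpha$ mapping theory is needed for this term. (The paper's $J_i$-estimates are closed by the same mechanism, since the regions of integration are likewise truncated, although its exposition silently conflates $I_\alpha(|f|)$ with $I_\alpha(f)$.) Finally, since the paper assumes only $\gamma$-H\"older regularity of $K$, the outer-piece smoothness estimate should read $|K(x+h,y)-K(x,y)|\lesssim|h|^\gamma/|x-y|^{n+\gamma}$ rather than the $C^1$ version you wrote; this replaces $|h|$ by $|h|^\gamma$ in your $R_\delta$ bound but does not affect the structure of the argument.
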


\begin{theorem}\label{thmL1}
Suppose that $0<\alpha<1$ and $T$ is a Calder\'on-Zygmund operator that is homogeneous. Then the commutator
$$[b,T]:L^{1}(\mathbb{R}^n)\rightarrow L^{\frac{n}{n-\alpha},\infty}(\mathbb{R}^n)$$
is compact if and only if $b\in {\rm CMO}_{\alpha}(\mathbb{R}^n).$
\end{theorem}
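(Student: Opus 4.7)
The plan is to prove the two implications independently, both eventually reducing to the Fr\'echet--Kolmogorov-type characterization in Theorem \ref{WLP}.

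\textbf{Sufficiency.} I would proceed by smooth approximation. Using density, choose $b_j\in C_c^\infty(\mathbb{R}^n)$ with $\|b-b_j\|_{\mathrm{Lip}_\alpha}\to 0$. A weak-endpoint analogue of Janson's inequality,
\[
\|[b',T]f\|_{L^{n/(n-\alpha),\infty}(\mathbb{R}^n)}\lesssim \|b'\|_{\mathrm{Lip}_\alpha}\|f\|_{L^1(\mathbb{R}^n)},
\]
shows that the sequence of operators $[b_j,T]$ converges to $[b,T]$ in the operator norm, so that it suffices to treat $b\in C_c^\infty(\mathbb{R}^n)$. For such $b$, I would express $[b,T]$ as the integral operator with kernel $(b(x)-b(y))K(x,y)$ and verify conditions (i)--(iii) of Theorem \ref{WLP} on $\mathcal{F}=\{[b,T]f:\|f\|_{L^1}\le 1\}$. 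Norm boundedness is the displayed estimate; uniform translation continuity is a direct computation using the $C^1$ regularity of $b$ and the standard H\"ormander-type regularity of $K$ to control the difference $[b,T]f(\cdot+h)-[b,T]f(\cdot)$; and uniform decay away from the origin uses that, for $|x|$ larger than $2\,\mathrm{diam}\,\mathrm{supp}(b)$, the commutator reduces to $[b,T]f(x)=-T(bf)(x)$ with pointwise bound $|T(bf)(x)|\lesssim \|f\|_{L^1}|x|^{-n}$, from which uniform smallness of $\|[b,T]f\chi_{E_A}\|_{L^{n/(n-\alpha),\infty}}$ follows.

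\textbf{Necessity.} Assume $[b,T]$ is compact. First, compactness implies boundedness $L^1\to L^{n/(n-\alpha),\infty}$, and by a homogeneity-driven variant of Janson's theorem this forces $b\in \mathrm{Lip}_\alpha(\mathbb{R}^n)$. It remains to show the three vanishing conditions singling out $\mathrm{CMO}_\alpha\subset \mathrm{Lip}_\alpha$, namely that
\[
M_\alpha(Q,b):=\frac{1}{|Q|^{1+\alpha/n}}\int_Q|b(x)-b_Q|\,dx
\]
tends to zero as (a) $\ell(Q)\to 0$, (b) $\ell(Q)\to \infty$, and (c) the center of $Q$ escapes to infinity while $\ell(Q)=1$. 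For each potential failure I would follow Uchiyama's strategy: given $Q$ with $M_\alpha(Q,b)\ge \delta$, build a normalized test function $f_Q=|Q|^{-1}(\chi_{Q^1}-\chi_{Q^2})$, $\|f_Q\|_{L^1}=1$, where $Q^1,Q^2\subset Q$ come from the super- and sub-median level sets of $b$ on $Q$, and locate a translate $\widetilde Q$ using the homogeneity of the kernel of $T$ so that $K(\cdot,\cdot)$ has a definite sign on $\widetilde Q\times\mathrm{supp}(f_Q)$. Writing $[b,T]f_Q=(b-b_Q)\,Tf_Q - T((b-b_Q)f_Q)$, this yields the distributional lower bound
\[
\bigl|\bigl\{x\in\widetilde Q:\,|[b,T]f_Q(x)|>c\,|Q|^{\alpha/n-1}M_\alpha(Q,b)\bigr\}\bigr|\ge c|Q|.
\]
A sequence $\{Q_k\}$ witnessing the failure of (a), (b), or (c) then produces $\{[b,T]f_{Q_k}\}$ that respectively violates (ii), (i), or (iii) of Theorem \ref{WLP}, contradicting compactness.

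\textbf{Main obstacle.} The technical heart of the argument is the distributional lower bound at the weak endpoint. At the strong-type $L^1\to L^{n/(n-\alpha)}$ level (as in the work of Nogayama--Sawano and Guo--He--Wu--Yang), this bound comes from pairing $[b,T]f_Q$ against a dual test function, a tool absent in $L^{n/(n-\alpha),\infty}$. One must instead argue level-set-wise on $\widetilde Q$: use the sign condition on $K$ to bound $|Tf_Q|$ from below there, then exploit the definition of $Q^1,Q^2$ to dominate the lower-order piece $T((b-b_Q)f_Q)$ by a fraction of the oscillation $M_\alpha(Q,b)$. This weak-type mean-oscillation estimate, accomplished without recourse to duality, is where the hypothesis that $T$ is homogeneous is indispensable and constitutes the main novelty that the weak endpoint demands.
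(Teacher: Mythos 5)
Your sufficiency argument follows the paper's strategy exactly: approximate $b$ by $b_\epsilon\in C_c^\infty$ using density in $\mathrm{Lip}_\alpha$, pass to the integral kernel $(b_\epsilon(x)-b_\epsilon(y))K(x,y)$, and verify the three conditions of Theorem~\ref{WLP} for $\{[b_\epsilon,T]f:\|f\|_{L^1}\le1\}$. For the necessity direction the overall machinery is also aligned with the paper: deduce $b\in\mathrm{Lip}_\alpha$ from boundedness (the paper does this via the weak factorization Theorem~\ref{thm1.1}, which feeds into Theorem~\ref{thm4.1}; you invoke it as a ``homogeneity-driven Janson variant'' without unpacking the substantial block-space/Morrey duality machinery behind it), build Uchiyama-style test functions from median level sets, use the homogeneity sign condition on a translate, and extract a weak-type lower bound from an $L^1$-average via Kolmogorov's inequality, precisely as in the derivation of \eqref{5N6}--\eqref{5N7}.

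Where you diverge is the final contradiction step, and there is a concrete error there. The paper does not check which of conditions (i)--(iii) of Theorem~\ref{WLP} fails; instead it shows $\|[b,T]\phi_k\|_{L^{q,\infty}}\approx1$ while simultaneously $[b,T]\phi_k\to0$ in an auxiliary Lebesgue norm (using $\|\phi_k\|_{H^{1/2}}\lesssim|B_k|\to0$ for shrinking balls, or $\|\phi_k\|_{L^p}\to0$ for growing balls); this forces any $L^{q,\infty}$-convergent subsequence of $[b,T]\phi_k$ to converge in measure to $0$, contradicting the uniform lower bound. Your route through the WLP conditions is legitimate in principle, but your assignment ``$\ell(Q)\to\infty$ violates (i)'' cannot be right: condition (i) (uniform norm boundedness) is automatic once $[b,T]$ is bounded, which compactness guarantees. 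What actually fails for $\ell(Q_k)\to\infty$ is (iii), for the same reason as for the escaping-cube case: the translate $\widetilde Q_k$ lies at distance $\gtrsim M\,\ell(Q_k)\to\infty$ from $Q_k$, so by choosing the direction of the translate one can push $\widetilde Q_k$ outside $B(0,A)$ for any fixed $A$, and then $\|[b,T]f_{Q_k}\chi_{E_A}\|_{L^{q,\infty}}$ stays bounded below. Apart from this mis-assignment, your sketch is a correct alternative to the paper's convergence-in-measure argument, and your observation that the weak-endpoint lower bound cannot be obtained by duality pairing (as it can at the strong $L^{n/(n-\alpha)}$ endpoint) is exactly the technical issue the paper's Kolmogorov-based estimate addresses.
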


This paper is organized as follows. This paper discusses the compactness of commutators of Calder\'{o}n-Zygmund operators in various function spaces. The main results are the compactness criteria for commutators in weak Lebesgue spaces, Hardy spaces, and $L^1(\mathbb{R}^n)$.
In Section 2, the paper introduces the necessary definitions and notation for the framework being used. This section can be skipped by readers familiar with the subject.
Section 3 presents the proof of the compactness criteria for commutators in weak Lebesgue spaces.
Section 4 focuses on the compactness results for commutators in Hardy spaces.
In Section 5, the paper establishes the characterization of compactness of commutators in $L^1(\mathbb{R}^n)$ space. This section also includes some basic lemmas on Hardy factorization.
The Appendix contains counterexamples and an Minkowski-type inequality for weak Lebesgue space.

\section{Preliminaries}
Let $|E|$ denote the Lebesgue measure of a measurable set $E\subset \mathbb{R}^n$. Throughout this paper, the letter $C$ denotes constants which are independent of main variables and may change from one occurrence to another. By $A\lesssim B$ we mean that $A\lesssim CB$ with a positive constant $C$ independent of the appropriate quantities. If $A\lesssim B$ and $B\lesssim A$, we write $A\approx B$ and say that $A$ and $B$ are equivalent.

\subsection{Calder\'on-Zygmund operators}
Let $K(x,y),x,y\in \mathbb{R}^n,$ be a locally integrable function, defined away from the diagonal $\{x=y\}$. Then, we say that $K$ is a Calder\'on-Zygmund kernel if it satisfies the following size and smoothness conditions:
$$|K(x,y)|\leq \frac{C_{0}}{|x-y|^{n}},$$
for some constant $C_{0}>0$ and for all $(x,y)\in \mathbb{R}^{2n}$ away from the diagonal;
$$|K(x,y)-K(x',y)|+|K(y,x)-K(y,x')|\leq \frac{C_{0}|x-x'|^{\gamma}}{|x-y|^{n+\gamma}}$$
for some $\gamma>0$ and $|x-x'|\leq \frac{1}{2}|x-y|$. Suppose $T$ is a operator mapping from $\mathcal{S}(\mathbb{R}^n)$ to $\mathcal{S}'(\mathbb{R}^n)$, where we denote by $\mathcal{S}(\mathbb{R}^n)$ the spaces of all Schwartz function on $\mathbb{R}^n$ and by $\mathcal{S}'(\mathbb{R}^n)$ its dual space. We further assume $T$ is associated with the Calder\'on-Zygmund kernel,
\begin{equation}\label{C-Z}
T(f)(x)=\int_{\mathbb{R}^n}K(x,y)f(y)dy,
\end{equation}
whenever $f\in \mathbb{R}^n$ and $x\notin supp (f)$. If $T$ is bounded from $L^{2}(\mathbb{R}^n)$ to  $L^{2}(\mathbb{R}^n)$, then $T$ is called a Calder\'on-Zygmund operator.

We state our main results as follows. In addition, we say that $T$ is homogeneous if the kernel $K$ satisfies
\begin{equation}
K(x,y)\geq \frac{C}{M^{n}r^{n}} \qquad {\text or} \qquad K(x,y)\leq -\frac{C}{M^{n}r^{n}}
\end{equation}
for all $x\in B_{1}, y\in B_{2}$, where $B_{1}=(x_{1},r), B_{2}=(x_{2},r)$ are the disjoint balls satisfying the condition that $|x_{1}-x_{2}|\approx Mr$ with $r>0$ and $M>10$.

\subsection{The BMO type spaces}
Let $0\leq \alpha<1$. A locally integrable function $f$ is said to belong to Campanato space ${\rm BMO}_{\alpha}(\mathbb{R}^n)$ if there exists a constant
$C > 0$ such that for any cube $Q\subset \mathbb{R}^n$,
$$\frac{1}{|Q|}\int_{Q}|f(x)-f_{Q}|dx\leq C|Q|^{\alpha/n},$$
where $f_{Q}=\frac{1}{|Q|}\int_{Q}f(x)dx$ and the minimal constant $C$ is defined by $\|f\|_{\rm BMO_{\alpha}(\mathbb{R}^n)}$.

The Campanato spaces extend the notion of functions of bounded mean oscillation and allow a full characterization $Lip_{\alpha}(\mathbb{R}^n)$.
The Lipschitz (H\"{o}lder) spaces and Campanato spaces are related by the following equivalences:
$$\|f\|_{Lip_{\alpha}(\mathbb{R}^n)}:=\sup_{x,h\in \mathbb{R}^n,h\neq 0}\frac{|f(x+h)-f(x)|}{|h|^{\alpha}}\approx \|f\|_{\mathcal{C}_{\alpha,q}},\quad 0<\alpha<1.$$
The equivalence can be found in \cite{DS1984}, or \cite{WZTmn} for the general case.

In particular, ${\rm BMO}_{0}(\mathbb{R}^n)={\rm BMO}(\mathbb{R}^n)$, the spaces of bounded mean oscillation. The crucial property of ${\rm BMO}$ functions is the John-Nirenberg inequality \cite{JN1961},
$$|\{x\in Q: |f(x)-f_{Q}|>\lambda\}|\leq c_{1}|Q|e^{-\frac{c_{2}\lambda}{\|f\|_{\rm BMO(\mathbb{R}^n)}}},$$
where $c_{1}$ and $c_{2}$ depend only on the dimension. A well-known immediate corollary of the John-Nirenberg inequality is as follows:
$$\|f\|_{\rm BMO(\mathbb{R}^n)}\approx \sup_{Q}\frac{1}{|Q|}\Big(\int_{Q}|f(x)-f_{Q}|^{p}dx\Big)^{1/p},$$
for each $1<p<\infty$. In fact, the equivalence also holds for $0<p<1$. See, for example, the work of Str\"{o}mberg \cite{Str1979}(or \cite{HT2019} and \cite{WZTams} for the general case).

\subsection{The Hardy spaces}
The theory of Hardy spaces is vast and complicated, it has been systematically developed and plays an important role in harmonic analysis and PDEs, see \cite{C1974, FS1972, L1978}. A bounded tempered distribution $f$ is in the Hardy space $H^{\rho}(\mathbb{R}^n)$ if the Poisson maximal function
$$M(f;P)=\sup_{t>0}|(P_{t}*f)(x)|$$
lies in $L^{\rho}(\mathbb{R}^n)$.

We first recall the atomic decomposition of Hardy spaces.
Let $0<\rho\leq 1\leq q\leq \infty, \rho\neq q$ and the integer $l= [n(\frac{1}{\rho}-1)]$ ($[x]$ indicates the integer part of $x$).
Then $l=0$ if $\frac{n}{n+1}<\rho\leq 1$.

\begin{definition}\label{defH}
A function $a\in L^{q}(\mathbb{R}^n)$ is called a $(\rho,q,l)$ atom for $H^\rho(\mathbb{R}^n)$ if there exists a cube $Q$ such that
\begin{itemize}
  \item [\it (i)]  $a$ is supported in $Q$;
  \item [\it (ii)] $\|a\|_{L^{q}(\mathbb{R}^n)}\leq |Q|^{\frac{1}{q}-\frac{1}{\rho}}$;
  \item [\it (iii)] $\int_{\mathbb{R}^n} a(x)x^{\nu}dx=0$ for all multi-indices $\alpha$ with $0\leq |\nu|\leq l$.
  \end{itemize}
Here, $(\it i)$ means that an atom must be a function with compact support,
$(\it ii)$ is the size condition of atoms, and $(\it iii)$ is called the cancellation moment
condition. The atomic Hardy space $H^{\rho,q,l}(\mathbb{R}^n)$ is defined by
\begin{equation*}
\begin{aligned}
H^{\rho,q,l}(\mathbb{R}^n)=\Big\{f\in \mathcal{S}':f=^{S'}\sum_{k}\lambda_{k}a_{k}(x), &\text{each} ~~a_{k} ~~\text{is a} ~~(\rho,q,l)-\text{atom},\\
&\text{and}~~ \sum_{k}|\lambda_{k}|^{\rho}<\infty\Big\}.
\end{aligned}
\end{equation*}
Setting $H^{\rho,q,l}(\mathbb{R}^n)$ norm of $f$ by
$$\|f\|_{H^{\rho,q,l}(\mathbb{R}^n)}=\inf \big(\sum_{k}|\lambda_k|^{\rho}\big)^{1/\rho},$$
where the infimum is taken over all decompositions of $f=\sum_{k}\lambda_{k}a_{k}$ above.
\end{definition}
Note that $H^{\rho,q,l}(\mathbb{R}^n)=H^{\rho}(\mathbb{R}^n)$ was proved by Coifman \cite{C1974} for $n=1$ and Latter \cite{L1978} for $n>1$. This indicates that each element in $H^{\rho}(\mathbb{R}^n)$ can be decomposed into a sum of atoms in a certain way.
Note that in {\cite{P.B}}, the authors show that the dual of $H^\rho(\mathbb{R} ^{n} )$ is $Lip_\alpha(\mathbb{R} ^{n} )$; a key fact that will be used later on in this paper.

\begin{definition}\label{defHb}
A function $a$ is a $b-$atom if there is a cube $Q$ for which
\begin{enumerate}
  \item[{\it(i)}] supp $(a) \subset Q$;
  \item[{\it(ii)}] $\|a\|_{L^{\infty}}\leq \frac{1}{|Q|}$;
  \item[{\it(iii)}] $\int_{Q}a(y)dy=0$;
    \item[{\it(iv)}] $\int_{Q}a(y)b(y)dy=0$.
\end{enumerate}

The space $H^{1}_{b}(\mathbb{R}^n)$ consists of the subspace of $L^{1}(\mathbb{R}^n)$ of function $f$ which can be written as $f=\sum_{j}\lambda_{j}a_{j}$ where $a_{j}$ are $b-$atoms and $\lambda_{j}$ are complex numbers with $\sum_{j}|\lambda_{j}|<\infty$.
\end{definition}

\subsection{Relatively compact sets in quasi-Banach function spaces}

We first recall some basic definitions of function spaces. In this paper, we only consider the class of Lebesgue measurable functions, denoted by $L^{m}$, where $m$ means the Lebesgue measure on $\mathbb{R}^n$.
\begin{definition}
A (quasi-)normed space $(E,\|\cdot\|_{E})$ with $E\subset L(m)$ is called a (quasi-)Banach function space (Q-BFS) if it satisfies the following conditions:
\begin{enumerate}
  \item[(B0)] if $\|f\|_{E}=0\Longleftrightarrow f=0\ a.e.$;
  \item[(B1)] if $f\in E$, then $\||f|\|_{E}=\|f\|_{E}$;
  \item[(B2)] if $0\leq g\leq f$, then $\|g\|_{E}\leq \|f\|_{E}$;
  \item[(B3)] if $0\leq f_{n}\uparrow f$, then $\|f_{n}\|_{E}\uparrow \|f\|_{E}$;
  \item[(B4)] if $A\subset \mathbb{R}^n$ is bounded, then $\chi_{A}\in E$.
\end{enumerate}
\end{definition}

Moreover, we recall the following two definitions.
\begin{definition}
(Absolutely continuous quasi-norm). Let $E$ be a $Q$-BFS. A function $f$ in $E$ is said to have absolutely continuous quasi-norm in $E$ if $\|f\chi_{A_{n}}\|_{E}\rightarrow 0$ as $A_{n}\rightarrow 0$. The set of all functions in $E$ with absolutely continuous quasi-norm is denoted by $E_{a}.$ if $E=E_{a}$, then the space $E$ is said to have absolutely continuous quasi-norm.
\end{definition}

We point out that the dominated convergence theorem holds in $Q$-BFS with absolutely continuous quasi-norm; see \cite[Proposition 3.9]{CGB}.
\begin{definition}
(Uniformly absolutely continuous quasi-norm (UAC)). Let $K$ be a $Q$-BFS and let $K\subset E_{a}$. Then $K$ is said to have uniformly absolutely continuous quasi-norm $(K\subset UAC(E))$ if for every sequence $\{A_{k}\}_{k=1}^{\infty}$ with $A_{k}\rightarrow \emptyset$, $\|f\chi_{A_{k}}\|_{E}\rightarrow 0$ holds uniformly for all $f\in K$.
\end{definition}

\subsection{Morrey space and its predual}
Morrey spaces describe local regularity more precisely than $L^{q}(\mathbb{R}^n)$ spaces and can be seen as a complement of $L^{q}(\mathbb{R}^n)$.
\begin{definition}
Let $0\leq \alpha<n$ and $1<q <\infty $, The Morrey space $L^{q,\alpha}(\mathbb{R} ^{n})$ is defined by
$$L^{q,\alpha}(\mathbb{R} ^{n})=\{f\in L^q_{loc}(\mathbb{R} ^{n}):\|f\|_{L^{q,\alpha}(\mathbb{R}^n)}<\infty \},$$
with
$$\|f\|_{L^{q,\alpha}(\mathbb{R}^n)}:= \underset{x\in \mathbb{R} ^{n},r>0}{\sup} \left(r^{-\alpha}\int_{B(x,r)}|f(y)|^qdy \right )^{\frac{1}{q}}<\infty.$$
where the supremum is taken over all balls $B(x,r)$ in $ \mathbb{R} ^{n}$.
\end{definition}
Following Blasco, Ruiz and Vega \cite{B.O}, we define the function called a {\it block}.
\begin{definition}\label{sec2-def2.1}
Let $\alpha \in [0,n)$, $1<q<\infty$, and $\frac{1}{q}+\frac{1}{{q}' }  =1$. A function $b(x)$ is called a $(q,\alpha)$-block, if there there exists a ball $B(x_0,r)$ such that
$$supp(b)\subset B(x_0,r),\qquad\|b\|_{L^q}\le r^{-\frac{\alpha}{{q}'}}.$$

We further recall the definition of $\mathcal{B} ^{q,\alpha}(\mathbb{R}^n)$ via $(q,\alpha)$-blocks from \cite{B.O}. It was shown in \cite{B.O} that $\mathcal{B}^{q,\alpha}(\mathbb{R}^n)$ is a Banach space, and the dual space of $\mathcal{B}^{q,\alpha}$ is $L^{q',\alpha}(\mathbb{R}^n)$.
\end{definition}
\begin{definition}\label{sec2-def2.2}
Let $\alpha \in [0,n)$, $1<q<\infty$. The space $\mathcal{B} ^{q,\alpha}(\mathbb{R}^n)$ is defined by setting
$$\mathcal{B}^{q,\alpha}(\mathbb{R}^n)=\bigg\{g\in L^1_c(\mathbb{R}^n ): g=\sum^{\infty}_{j=1}m_j b_j, \sum^{\infty}_{j=1}|m_j|<\infty\bigg\},$$
where $\{b_j\}_{j\ge1}$ are $(q,\alpha)$-block. Furthermore, for every $g \in \mathcal{B} ^{q,\alpha}(\mathbb{R}^n)$, let
$$\|g\|_{\mathcal{B} ^{q,\alpha}(\mathbb{R}^n)}=\text{inf}\bigg\{\sum^{\infty}_{j=1}|m_j|\bigg\},$$
where the infimum is taken over all possible decompositions of $g$ as above.
\end{definition}

\section{Characterization of relative compactness in the weak Lebesgue spaces}

Let $L^{0}(m)$ denote the class of functions in $L(m)$ that are finite almost everywhere, with the topology of convergence in measure on sets of finite measure. We recall that $Q$-BFS is continuously embedded in $L^{0}(m)$.
\begin{lemma}\label{meas}
(Lemma 3.3 in \cite{CGB}). Let $E$ be a $Q$-BFS. Then $E$ is continuously embedded in $L^{0}(m)$. In particular, if $f_{k}$ tends to $f$ in $E$, then $f_{k}$ tends to $f$ in measure on sets of finite measure and hence some sequence convergence pointwise to $f$ almost everywhere.
\end{lemma}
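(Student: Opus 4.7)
The plan is to prove the quantitative form ``$\|f_k\|_E\to 0$ implies $f_k\to 0$ in measure on every set of finite measure'', which is exactly the continuity of the embedding $E\hookrightarrow L^0(m)$; linearity then handles convergence to an arbitrary limit $f$. The pointwise a.e.\ subsequence at the end will follow from the classical Riesz extraction combined with a diagonal procedure on the exhaustion $\mathbb{R}^n=\bigcup_{j\geq 1}B(O,j)$.

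First I would argue by contradiction. Suppose there exist $A\subset\mathbb{R}^n$ with $m(A)<\infty$, numbers $\epsilon,\delta>0$, and (after relabelling) a subsequence such that $m(B_k)\geq\delta$ for $B_k:=\{x\in A:|f_k(x)|>\epsilon\}$. The pointwise Chebyshev-type inequality $\epsilon\chi_{B_k}\leq |f_k|$, combined with axioms (B1) and (B2), yields
$$\epsilon\,\|\chi_{B_k}\|_E\leq \|\,|f_k|\,\|_E=\|f_k\|_E\longrightarrow 0,$$
so $\|\chi_{B_k}\|_E\to 0$. Everything then reduces to the following claim: if $B_k\subset A$, $m(A)<\infty$, and $\|\chi_{B_k}\|_E\to 0$, then $m(B_k)\to 0$.

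To prove this claim I would force the series $\sum_k\chi_{B_k}$ to converge in $E$ and hence to be finite a.e. Because $E$ is only quasi-normed, the usual triangle inequality is unavailable, so I would invoke the Aoki--Rolewicz theorem to obtain an equivalent $\theta$-norm $\rho$ on $E$ for some $\theta\in(0,1]$ satisfying $\rho(f+g)^\theta\leq \rho(f)^\theta+\rho(g)^\theta$; the lattice axioms (B1)--(B3) persist under the passage to $\rho$ since $\rho$ is constructed from $\|\cdot\|_E$ alone. After thinning to a further subsequence with $\rho(\chi_{B_k})^\theta\leq 2^{-k}$, the non-negative partial sums $S_N:=\sum_{k=1}^N\chi_{B_k}$ satisfy $\rho(S_N)^\theta\leq 1$ uniformly in $N$, while $S_N\uparrow S$ pointwise. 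The Fatou-type property (B3) then forces $\|S\|_E<\infty$, so $S<\infty$ almost everywhere, which by Borel--Cantelli means $m(\limsup_k B_k)=0$. But the reverse Fatou lemma for measures, valid because $B_k\subset A$ with $m(A)<\infty$, gives $m(\limsup_k B_k)\geq\limsup_k m(B_k)\geq\delta>0$, the desired contradiction.

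The main obstacle is precisely this quasi-norm summation step: without the Aoki--Rolewicz device one cannot even make sense of $\sum_k\chi_{B_k}$ in $E$, and naive iteration of the quasi-triangle inequality produces factors growing in $k$ that ruin the summability. Once convergence in measure on every set of finite measure is in hand, the a.e.\ convergent subsequence on all of $\mathbb{R}^n$ is standard: extract an a.e.\ convergent subsequence on each ball $B(O,j)$ by a Riesz-type argument, and then diagonalize to obtain one subsequence converging a.e.\ on the whole space.
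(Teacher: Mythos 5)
The paper does not actually prove this lemma: it is stated as a citation of Lemma 3.3 in \cite{CGB}, so there is no ``paper's own proof'' to compare against. Your proposal is a self-contained argument, and its overall scheme (reduce to ``$\|\chi_{B_k}\|_E\to 0$ with $B_k$ inside a finite-measure set forces $m(B_k)\to 0$'', then Borel--Cantelli against reverse Fatou) is sound and does yield the embedding. Two remarks are in order.

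First, Aoki--Rolewicz is a convenience here, not a necessity, and your stated justification for it is slightly off. If $\kappa$ is the quasi-triangle constant of $\|\cdot\|_E$, iterating the quasi-triangle inequality gives $\|\sum_{k=1}^N g_k\|_E \leq \sum_{k=1}^N \kappa^k \|g_k\|_E$; the factors $\kappa^k$ do grow, but they are harmless once you thin the sequence so that $\|\chi_{B_k}\|_E \leq (2\kappa)^{-k}$, which is always possible since $\|\chi_{B_k}\|_E\to 0$. So the ``naive iteration'' you dismiss does work, provided the thinning is aggressive enough. Relatedly, your parenthetical claim that (B1)--(B3) persist for the renorming $\rho$ is both unjustified (monotonicity of the Aoki--Rolewicz functional under $0\leq g\leq f$ is not obvious from its construction) and unnecessary: in the argument you only use the $\theta$-triangle inequality for $\rho$ together with its equivalence to $\|\cdot\|_E$, applying (B2) and (B3) only to $\|\cdot\|_E$ itself.

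Second, the step ``(B3) forces $\|S\|_E<\infty$, so $S<\infty$ a.e.'' is the one genuinely delicate point. As stated, (B3) applies to $0\leq f_n\uparrow f$ with $f$ in the ambient class of measurable functions, and the fact that finiteness of the limiting quasi-norm forces finiteness a.e.\ of $S=\sum_k\chi_{B_k}$ is not itself one of (B0)--(B4). The clean fix is a truncation: for $M\in\mathbb{N}$, the truncates $S_N\wedge M \uparrow S\wedge M$ are bounded, so (B3) and (B2) give $\|S\wedge M\|_E\leq \sup_N\|S_N\|_E=:C$; if $G\subset\{S=\infty\}$ were a bounded set of positive measure, then $\chi_G\in E$ by (B4) with $\|\chi_G\|_E=:c>0$ by (B0), and $S\wedge M\geq M\chi_G$ would force $Mc\leq C$ for all $M$, a contradiction. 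Since here $S$ is supported in $A$ (assume $A$ bounded; the general finite-measure case is reduced to this by exhausting $A$ by $A\cap B(O,j)$), this yields $m(\{S=\infty\})=0$, and the rest of your argument closes as you describe. With these two points addressed, the proof is correct.
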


\begin{lemma}\label{RC}
(Theorem 3.17 in \cite{CGB}). Let $E$ be a Q-BFS and let $K\subset E_{a}$. Then
$K$ is relatively compact in $E$ if and only if it is locally relatively compact in measure
and $K\subset UAC(E)$.
\end{lemma}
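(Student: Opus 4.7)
The plan is to characterize relative compactness in the Q-BFS $E$ by decoupling two independent aspects: local convergence in measure on bounded sets and global tightness, i.e.\ uniform smallness of the quasi-norm on sets of small measure and on tails at infinity. The proof combines the continuous embedding $E \hookrightarrow L^0(m)$ from Lemma \ref{meas} with a Vitali-type convergence argument adapted to the Q-BFS setting.

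For the necessity direction, assume $K$ is relatively compact in $E$. Since $E \hookrightarrow L^0(m)$ is continuous, $K$ is relatively compact in $L^0(m)$, and this is precisely local relative compactness in measure. To verify $K \subset \mathrm{UAC}(E)$, fix $\varepsilon>0$ and pick a finite $\varepsilon$-net $f_1,\ldots,f_N \in K \subset E_a$. For any sequence $A_k \to \emptyset$, absolute continuity of each $f_i$ gives $\|f_i \chi_{A_k}\|_E \to 0$. For arbitrary $f \in K$, choose $i$ with $\|f-f_i\|_E < \varepsilon$; then (B2) yields $\|(f-f_i)\chi_{A_k}\|_E \le \|f-f_i\|_E$, and the quasi-triangle inequality in $E$ forces $\|f \chi_{A_k}\|_E \lesssim \varepsilon$ for all large $k$, uniformly in $f \in K$.

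For the sufficiency direction, start with a sequence $\{f_n\} \subset K$ and put $B_m = B(O,m)$. These sets have finite measure, so local relative compactness in measure on each $B_m$, together with the standard fact that convergence in measure on a finite-measure set admits an a.e.\ convergent subsequence, allows a diagonal extraction producing a subsequence $\{f_{n_j}\}$ that converges a.e.\ on every $B_m$, and hence a.e.\ on $\mathbb{R}^n$, to some $f \in L^0(m)$. A Fatou-type consequence of axiom (B3) places $f$ in $E$ with $\|f\|_E \le \liminf_j \|f_{n_j}\|_E$ and shows $f$ inherits the UAC bound from the family $\{f_{n_j}\}$.

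The main obstacle is the last step: upgrading a.e.\ convergence plus uniform tightness to convergence in the quasi-norm of $E$. Given $\varepsilon>0$, first use $\mathrm{UAC}(E)$ with $A_m = \{x : |x|>m\}$ to pick $m$ so that $\|g \chi_{A_m}\|_E < \varepsilon$ for every $g$ in the (now UAC) family $\{f_{n_j}\} \cup \{f\}$. Next apply Egorov's theorem on $B_m$ (which has finite measure) to find a set $G \subset B_m$ with $|G|$ as small as desired and $f_{n_j} \to f$ uniformly on $B_m \setminus G$. Choosing $|G|$ small enough and invoking $\mathrm{UAC}(E)$ once more (now on sets of small measure inside $B_m$) absorbs the middle piece in the quasi-triangle decomposition
$$\|f_{n_j}-f\|_E \le C\bigl(\|(f_{n_j}-f)\chi_{A_m}\|_E + \|(f_{n_j}-f)\chi_G\|_E + \|(f_{n_j}-f)\chi_{B_m\setminus G}\|_E\bigr).$$
The first two terms are $O(\varepsilon)$ by UAC, while the third tends to $0$ as $j \to \infty$ thanks to uniform convergence on $B_m \setminus G$ together with $\|\chi_{B_m\setminus G}\|_E < \infty$ from axiom (B4). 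As $\varepsilon$ is arbitrary, $f_{n_j} \to f$ in $E$, so $K$ is relatively compact in $E$. The delicate point throughout is that $E$ carries only a quasi-norm, so every triangle estimate costs a multiplicative constant and the UAC hypothesis is essential in both the ``small sets'' and ``distant tails'' directions to close the estimate.
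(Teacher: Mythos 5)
The paper does not actually prove Lemma~\ref{RC}: it is quoted from Theorem~3.17 of \cite{CGB} and used as a black box, so there is no in-text argument to compare yours against. Your reconstruction is sound in outline and follows the Vitali-type strategy one would expect: necessity via a finite $\varepsilon$-net together with axiom (B2); sufficiency via diagonal extraction of a locally-in-measure convergent subsequence and an a.e.\ subsequence, a Fatou step from (B3) to place the limit $f$ in $E$, and then a three-piece splitting (tail at infinity, Egorov exceptional set of small measure, uniform convergence on the remaining good set) closed by (B4) and the quasi-triangle inequality.

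Two points deserve more care before this can be regarded as complete. First, your Fatou step tacitly assumes $\sup_j\|f_{n_j}\|_E<\infty$, but boundedness of $K$ in $E$ is not among the explicit hypotheses of the lemma; it does in fact follow from local relative compactness in measure together with $K\subset\mathrm{UAC}(E)$, but this is itself a short auxiliary argument that you should record rather than use silently. Second, the paper's definition of $\mathrm{UAC}(E)$ is sequential --- it quantifies over sequences $A_k\to\emptyset$ --- which gives you the tail estimate on $A_m=\{|x|>m\}$ directly, since those sets decrease to $\emptyset$; but your invocation of UAC ``on sets of small measure inside $B_m$'' is really the $\varepsilon$--$\delta$ form (small measure implies uniformly small quasi-norm), and a fixed set $G$ of small measure produced by Egorov need not sit inside any sequence tending to $\emptyset$. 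The two formulations are equivalent, but that equivalence is a separate lemma, not a restatement of the definition, and without it the middle term in your decomposition is not yet controlled. Neither issue indicates a wrong approach; they are gaps to close with standard supporting lemmas, and the overall route matches what a full proof would look like.
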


Now, we give the proof of characterization that a subset
in $L^{p,\infty}(\mathbb{R}^n)$ is a strongly pre-compact set, which is in itself interesting.

\vspace{0.3cm}

\textbf{Proof of Theorem \ref{WLP}. }
We will initially present the proof for the sufficiency. For the case $1<p<\infty$, we define the mean value of $f$ in $\mathcal{F}$ by
$$S_{a}(f)(x)=\frac{1}{|B(0,a)|}\int_{|y|\leq a}f(x+y)dy,$$
where $a>0$.

By the Minkowski-type inequality for $L^{p,\infty}(\mathbb{R}^n)$ with $1<p<\infty$ (see Proposition \ref{M-WLP} in Appendix), we have
\begin{equation}\label{W1}
\begin{aligned}
\|S_{a}f-f\|_{L^{p,\infty}(\mathbb{R}^n)}&\leq \bigg\|\frac{1}{|B(0,a)|}\Big|\int_{|y|\leq a}f(\cdot+y)-f(\cdot)dy\Big|\bigg\|_{L^{p,\infty}(\mathbb{R}^n)}\\
&\lesssim\frac{1}{|B(0,a)|}\int_{|y|\leq a}\|f(\cdot+y)-f(\cdot)\|_{L^{p,\infty}(\mathbb{R}^n)}dy\\
&\lesssim\sup_{|y|\leq a}\|f(\cdot+y)-f(\cdot)\|_{L^{p,\infty}(\mathbb{R}^n)}.
\end{aligned}
\end{equation}
It follows from \eqref{WLP1}, \eqref{WLP2} and \eqref{W1} that
\begin{equation}\label{a0}
\lim_{a\rightarrow 0}\|S_{a}f-f\|_{L^{p,\infty}(\mathbb{R}^n)}=0, \ \text{uniformly\ in} \ f\in \mathcal{F}
\end{equation}
and the set
$\{S_{a}f:f\in \mathcal{F}\}\subset L^{p,\infty}(\mathbb{R}^n)$
satisfies $$\sup_{f\in \mathcal{F}}\|S_{a}f\|_{L^{p,\infty}(\mathbb{R}^n)}\lesssim 1.$$ By \eqref{WLP3}, for any $0<\epsilon<1$, there exist $N>0$ and $A$ such that
\begin{equation}\label{EA1}
1<\epsilon^{-N}/4<A^{n/p}<\epsilon^{-N/2},
\end{equation}
and for every $f\in \mathcal{F}$,
\begin{equation}\label{EA2}
\|f_{E_{A}}\|_{L^{p,\infty}(\mathbb{R}^n)}<\epsilon/8.
\end{equation}

Now we prove that for each fixed $a$, the set $\{S_{a}f:f\in \mathcal{F}\}$ is a strongly pre-compact set in $\mathfrak{C}(E_{A}^{c})$, where
$E_{A}^{c}=\{x\in \mathbb{R}^n:|x|\leq A\}$ and $\mathfrak{C}(E_{a}^{c})$ denotes the continuous function space on $E^{c}_{A}$ with uniform norm.
By Ascoli-Arzel\`{a} theorem, it suffices to show that $\{S_{a}f: f\in \mathcal{F}\}$ is bounded and equicontinuous in $\mathfrak{C}(E_{A}^{c})$. In fact,
from Kolmogorov's inequality (see \cite[Lemma 2.8, p. 485]{GF1985}), we have
\begin{equation}\label{Kolm}
\|f\|_{L^{q}(Q,\frac{dx}{|Q|})}\leq C\|f\|_{L^{p,\infty}(Q,\frac{dx}{|Q|})}
\end{equation}
for any cube $Q$ and $0<q<p<\infty$. Applying H\"{o}lder's inequality and \eqref{Kolm} for $f\in \mathcal{F}$ and $x\in E_{A}^{c}$, we have
\begin{align*}
|S_{a}f(x)|&\leq \bigg\{\frac{1}{|B(0,a)|}\int_{|y|\leq a}|f(x+y)|^{q}dy\bigg\}^{1/q}\\
&= \bigg\{\frac{1}{|B(0,a)|}\int_{|y-x|\leq a}|f(y)|^{q}dy\bigg\}^{1/q}\\
&\leq C\|f\|_{L^{p,\infty}(\mathbb{R}^n)},
\end{align*}
where $1<q<p<\infty$ and the constant $C$ is independent of $f$ and $x$ here. On the other hand, for any $x_{1},x_{2}\in E_{A}^{c}$, by a direct computation, we obtain
\begin{equation}\label{EQUIC}
\begin{aligned}
|S_{a}f(x_{1})-S_{a}f(x_{2})|&\leq \frac{1}{|B(0,a)|}\int_{|y|\leq a}|f(x_{1}+y)-f(x_{2}+y)|dy\\
&\leq \bigg\{\frac{1}{|B(0,a)|}\int_{|y|\leq a}|f(x_{1}+y)-f(x_{2}+y)|^{q}dy\bigg\}^{1/q}\\
&\leq C\|f(\cdot+x_{2}-x_{1})-f(\cdot)\|_{L^{p,\infty}(\mathbb{R}^n)}.
\end{aligned}
\end{equation}
Thus, \eqref{WLP2} and \eqref{EQUIC} show the equicontinuity of $\{S_{a}f:f\in \mathcal{F}\}$.

Next we show that for small enough $a$, the set $\{S_{a}f:f\in \mathcal{F}\}$ is also a strongly pre-compact set in $L^{p,\infty}(\mathbb{R}^n)$. To do this, we need only to prove that the set $\{S_{a}f:f\in \mathcal{F}\}$ is a totally bounded set in $L^{p,\infty}(\mathbb{R}^n)$. Because the set
$\{S_{a}f:f\in \mathcal{F}\}$ is a totally bounded set in  $\mathfrak{C}(E_{A}^{c})$, hence for the above $\epsilon$ and $N$, there exist $\{f_{1},f_{2},\cdots,f_{m}\}\subset \mathcal{F}$, such that $\{S_{a}f_{1},S_{a}f_{2},\cdots, S_{a}f_{m}\}$ is a finite $\epsilon^{N+1}$-net in $\{S_{a}f:f\in \mathcal{F}\}$ in the norm of  $\mathfrak{C}(E_{A}^{c})$.
We then know that for any $f\in \mathcal{F}$, there is $1\leq j\leq m$ such that
\begin{equation}\label{EA3}
\sup_{y\in E^{c}_{A}}|S_{a}f(y)-S_{a}f_{j}(y)|<\epsilon^{N+1}.
\end{equation}
Below we show that $\{S_{a}f_{1}, S_{a}f_{2},\cdots, S_{a}f_{m}\}$ is also a finite $\epsilon-$net of $\{S_{a}f:f\in \mathcal{F}\}$ in the norm of $L^{p,\infty}(\mathbb{R}^n)$ if $a$ is small enough. For any $f\in \mathcal{F},$ there exists $f_{j} (1\leq j\leq m)$ such that
\begin{align*}
\|S_{a}f-S_{a}f_{j}\|_{L^{p,\infty}(\mathbb{R}^n)}
&= \|\big(S_{a}f-S_{a}f_{j}\big)\chi_{E_{A}}\|_{L^{p,\infty}(\mathbb{R}^n)}+\|\big(S_{a}f-S_{a}f_{j}\big)\chi_{E^{c}_{A}}\|_{L^{p,\infty}(\mathbb{R}^n)}\\
&=:I_{1}+I_{2}.
\end{align*}
We first give the estimate for $I_{1}$. By \eqref{a0} and the above $\epsilon$, there exists a constant $\delta>0$ such that if $a<\delta$, then
\begin{align*}
&\|\big(S_{a}f-f\big)\chi_{E_{A}}\|_{L^{p,\infty}(\mathbb{R}^n)}\leq \epsilon/8, \qquad
\|\big(S_{a}f_{j}-f_{j}\big)\chi_{E_{A}}\|_{L^{p,\infty}(\mathbb{R}^n)}\leq \epsilon/8.
\end{align*}
Applying \eqref{EA1} and \eqref{EA2} and the estimates above, we have
\begin{align*}
I_{1}&\leq \|\big(S_{a}f-f\big)\chi_{E_{A}}\|_{L^{p,\infty}(\mathbb{R}^n)}
+\|f\chi_{E_{A}}\|_{L^{p,\infty}(\mathbb{R}^n)}\\
&\qquad+\|f_{j}\chi_{E_{A}}\|_{L^{p,\infty}(\mathbb{R}^n)}
+\|\big(S_{a}f_{j}-f_{j}\big)\chi_{E_{A}}\|_{L^{p,\infty}(\mathbb{R}^n)}\\
&\leq \epsilon/2.
\end{align*}
For $I_{2}$, the inequalities \eqref{EA1} and \eqref{EA3} give us that
\begin{align*}
I_{2}&\leq A^{n/p}\sup_{y\in E^{c}_{A}}|S_{a}f(y)-S_{a}f_{j}(y)|\leq \epsilon/2.
\end{align*}
Therefore, we show that $\{S_{a}f_{1},S_{a}f_{2}\cdots,S_{a}f_{m}\}$ is also a finite $\epsilon-$net of $\{S_{a}f:f\in \mathcal{F}\}$ in the norm of $L^{p,\infty}(\mathbb{R}^n)$ if $a$ is small enough.

Finally, let us show that the set $\mathcal{F}$ is a relative compact set in $L^{p,\infty}(\mathbb{R}^n)$. Taking any sequence $\{f_{j}\}_{j=1}^{\infty}$ in $\mathcal{F}$, by the relative compactness of $\{S_{a}f:f\in \mathcal{F}\}$ in $L^{p,\infty}(\mathbb{R}^n)$, there exists a subsequence $\{S_{a}f_{j_{k}}\}_{k=1}^{\infty}$ of $\{S_{a}f_{j}:f_{j}\}$ that is convergent in $L^{p,\infty}(\mathbb{R}^n)$. So, for any $\epsilon>0$ there exists $K\in \mathbb{N}$ such that for any $k>K$ and $m\in \mathbb{N}$,
$$\|S_{a}f_{j_{k}}-S_{a}f_{j_{k+m}}\|_{L^{p,\infty}(\mathbb{R}^n)}<\epsilon.$$
By \eqref{a0}, we have
\begin{align*}
&\|f_{j_{k}}-f_{j_{k+m}}\|_{L^{p,\infty}(\mathbb{R}^n)}\\
&\leq \|S_{a}f_{j_{k}}-f_{j_{k}}\|_{L^{p,\infty}(\mathbb{R}^n)}
+\|S_{a}f_{j_{k}}-S_{a}f_{j_{k+m}}\|_{L^{p,\infty}(\mathbb{R}^n)}
+\|S_{a}f_{j_{k+m}}-f_{j_{k+m}}\|_{L^{p,\infty}(\mathbb{R}^n)}\\
&\leq 3\epsilon.
\end{align*}
This shows that the subsequence $\{f_{j_{k}}\}_{k=1}^{\infty}$ converges in $L^{p,\infty}(\mathbb{R}^n)$, since $L^{p,\infty}(\mathbb{R}^n)$ is a qusi-Banach space. Therefore, the set $\mathcal{F}$ is a relative compact set in $L^{p,\infty}(\mathbb{R}^n)$.

For $0<p\leq 1$, we only need to consider the case that $\mathcal{F}$ consists of only nonnegative functions. Denote $\mathcal{F}^{p/2}:=\{f^{p/2}:f\in \mathcal{F}\}$.
We claim that $\mathcal{F}^{p/2}$ is relatively compact in $L^{2,\infty}(\mathbb{R}^n)$. We only check condition $(ii)$ by
\begin{align*}
\|f^{p/2}(\cdot+y)-f^{p/2}(\cdot)\|_{L^{2,\infty}(\mathbb{R}^n)}&\leq \||f(\cdot+y)-f(\cdot)|^{p/2}\|_{L^{2,\infty}(\mathbb{R}^n)}\\
&=\|f(\cdot+y)-f(\cdot)\|^{p/2}_{L^{p,\infty}(\mathbb{R}^n)},
\end{align*}

Next, we prove that the relative compactness of $\mathcal{F}^{p/2}$ in $L^{2,\infty}(\mathbb{R}^n)$ implies the relative compactness of $\mathcal{F}$ in $L^{p,\infty}(\mathbb{R}^n)$.
For any sequence $\{f_{k}\}_{k=1}^{\infty}$ of $\mathcal{F}$, there exists a subsequence of $\{f^{p/2}_{k}\}_{k=1}^{\infty}$, denote by $\{f^{p/2}_{k_{i}}\}_{i=1}^{\infty}$,
which tends to $f^{p/2}$ in $L^{2,\infty}(\mathbb{R}^n)$. Using Lemma \ref{meas}, $f^{p/2}_{k_{i}}$ tends to $f^{p/2}$ locally in measure, we further choosing the diagonal subsequence of $\{f^{p/2}_{k_{i}}\}_{i=1}^{\infty}$, still denoted by $\{f^{p/2}_{k_{i}}\}_{i=1}^{\infty}$, pointwise tends to $f^{p/2}$ a.e. From this, $f_{k_{i}}\rightarrow f$ pointwise a.e., which further implies that $f_{k_{i}}\rightarrow f$ locally in measure.

On the other hand, since $\mathcal{F}^{p/2}$ is relatively compact in $L^{2,\infty}(\mathbb{R}^n)$, then $\mathcal{F}^{p/2}\subset UAC(L^{2,\infty}(\mathbb{R}^n))$ by Lemma \ref{RC}. One can easily verify that $\mathcal{F}^{p/2}\subset UAC(L^{2,\infty}(\mathbb{R}^n))$ implies $\mathcal{F}\subset UAC(L^{p,\infty}(\mathbb{R}^n))$.

Now, we have verified that $\mathcal{F}\subset UAC(L^{p,\infty}(\mathbb{R}^n))$ and $F$ is locally relative compact in
measure. The relative compactness of $\mathcal{F}$ follows by Lemma \ref{RC}.

\vspace{0.3cm}
To prove necessity, we first assume that condition $(i)$ in Theorem \ref{WLP} is violated. Then there exists a sequence $\{f_{m}\}$ of functions belonging to $\mathcal{F}$ such that the quasi-distance
$$\rho(f_{m},0)=\|f_{m}\|_{L^{p,\infty}(\mathbb{R}^n)}$$
tends to $+\infty$, By
$$\rho(f_{m},0)\leq C\big(\rho(f_{m},f)+\rho(f,0)\big),$$
we have $\rho(f_{m},f)\rightarrow +\infty$ as $m\rightarrow +\infty$. Hence, the set $\mathcal{F}$ is not compact.

We now assume that condition $(ii)$ does not hold. Then there exist $\delta>0$, a sequence
$$f_{1},f_{2},\cdots,f_{m},\cdots$$
of functions belonging to $\mathcal{F}$, and a sequence $a_{m}>0,$
$$\lim_{m\rightarrow +\infty}a_{m}=0$$
such that
$$\sup_{y\in B(O,a_{m})}\|f_{m}(\cdot+y)-f_{m}(\cdot)\|_{L^{p,\infty}(\mathbb{R}^n)}\geq \delta$$
for any $m$. Clearly
\begin{align*}
\delta\leq& \sup_{y\in B(O,a_{m})}\|f_{m}(\cdot+y)-f_{m}(\cdot)\|_{L^{p,\infty}(\mathbb{R}^n)}\\
\leq& C\sup_{y\in B(O,a_{m})}\|f_{m}(\cdot+y)-f(\cdot+y)\|_{L^{p,\infty}(\mathbb{R}^n)}\\
&+C\sup_{y\in B(O,a_{m})}\|f(\cdot+y)-f(\cdot)\|_{L^{p,\infty}(\mathbb{R}^n)}\\
&+C\|f_{m}(\cdot)-f(\cdot)\|_{L^{p,\infty}(\mathbb{R}^n)}\\
\leq &2C\|f_{m}(\cdot)-f(\cdot)\|_{L^{p,\infty}(\mathbb{R}^n)}+C\sup_{y\in B(O,a_{m})}\|f(\cdot+y)-f(\cdot)\|_{L^{p,\infty}(\mathbb{R}^n)}.
\end{align*}
Since
$$\lim_{m\rightarrow +\infty}\sup_{y\in B(O,a_{m})}\|f(\cdot+y)-f(\cdot)\|_{L^{p,\infty}(\mathbb{R}^n)}=0,$$
it follows that
$$\mathop{\underline{\lim}}\limits_{m\rightarrow +\infty}\|f_{m}(\cdot)-f(\cdot)\|_{L^{p,\infty}(\mathbb{R}^n)}\geq \frac{\delta}{2C}.$$
Consequently, the sequence $\{f_{m}\}$ and the set $\mathcal{F}$ are not compact.

Finally, if the condition $(iii)$ is not satisfied, there exist $\delta>0$, a sequence
$$f_{1},f_{2},\cdots,f_{m},\cdots$$
of functions belonging to $\mathcal{F}$, and a sequence $A_{m}>0,$
$$\lim_{m\rightarrow +\infty}A_{m}=+\infty,$$
such that
$$\|f_{m}\chi_{E_{A_{m}}}\|_{L^{p,\infty}(\mathbb{R}^n)}\geq \delta$$
for any $m$. Therefore,
\begin{align*}
\delta&\leq \|f_{m}\chi_{E_{A_{m}}}\|_{L^{p,\infty}(\mathbb{R}^n)}\\
&\leq C\|(f_{m}-f)\chi_{E_{A_{m}}}\|_{L^{p,\infty}(\mathbb{R}^n)}+C\|f\chi_{E_{A_{m}}}\|_{L^{p,\infty}(\mathbb{R}^n)}.
\end{align*}
Note that
$$\lim_{m\rightarrow +\infty}\|f\chi_{E_{A_{m}}}\|_{L^{p,\infty}(\mathbb{R}^n)}=0,$$
then
$$\|(f_{m}-f)\chi_{E_{A_{m}}}\|_{L^{p,\infty}(\mathbb{R}^n)}\geq \frac{\delta}{C}$$
and the set $\mathcal{F}$ is not compact. So the condition $(iii)$ must be hold.

We finish the proof of Theorem \ref{WLP}. \qed

\section{Compactness of commutators in Hardy type spaces}

The characterization of relative compactness in the classical $L^p(\mathbb{R}^n)$ Lebesgue spaces
was discovered by Kolmogorov (see \cite{Kol,Tik}) under some restrictive conditions.  Then it was extended by Riesz \cite{Rie}. The complete version of the classical
Riesz-Kolmogorov theorem can be stated as follows.
\begin{lemma}\label{LP-COMPACTNESS}
 (Classical Riesz-Kolmogorov theorem.)
Let $1\leq p<\infty$. A subset $\mathcal{F}$ of $L^{p}(\mathbb{R}^n)$ is relatively compact if and only if the following three conditions hold:
\begin{enumerate}
  \item[(i)] norm boundedness uniformly
  \begin{equation}\label{LP1}
  \sup_{f\in \mathcal{F}}\|f\|_{L^{p}(\mathbb{R}^n)}<\infty;
  \end{equation}
  \item[(ii)] translation continuty uniformly
  \begin{equation}\label{LP2}
\lim_{r\rightarrow 0}\sup_{y\in B(O,r)}\|f(\cdot+y)-f(\cdot)\|_{L^{p}(\mathbb{R}^n)}=0 \ \text{uniformly in} f\in \mathcal{F};
  \end{equation}
  \item[(iii)] control uniformly away from the origin
    \begin{equation}\label{LP3}
\lim_{\alpha \rightarrow \infty}\|f\chi_{E_{A}}\|_{L^{p}(\mathbb{R}^n)}=0 \  \text{uniformly in} f\in \mathcal{F},
  \end{equation}
  where $E_{A}=\{x\in \mathbb{R}^n:|x|>A\}$.
\end{enumerate}
\end{lemma}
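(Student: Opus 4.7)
The plan is to mirror the proof of Theorem \ref{WLP} essentially verbatim, with the strong Lebesgue norm replacing the weak one throughout. Since $L^p(\mathbb{R}^n)$ is itself a Banach function space with absolutely continuous norm for $1\le p<\infty$, and since the ordinary Minkowski integral inequality is immediate (replacing the Minkowski-type inequality for $L^{p,\infty}$ that occupied Proposition \ref{M-WLP}), the sufficiency argument is in fact strictly simpler here. I would present both directions in parallel to the weak case, emphasizing where the simplifications occur.

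For sufficiency, I would define the average $S_a f(x)=\frac{1}{|B(0,a)|}\int_{|y|\le a} f(x+y)\,dy$ and use Minkowski's integral inequality in $L^p$ to obtain
$$\|S_a f - f\|_{L^p(\mathbb{R}^n)} \le \sup_{|y|\le a}\|f(\cdot+y)-f(\cdot)\|_{L^p(\mathbb{R}^n)},$$
which tends to $0$ uniformly on $\mathcal{F}$ by condition (ii). Given $\epsilon>0$, I would choose $A$ via (iii) so that $\|f\chi_{E_A}\|_{L^p}<\epsilon$ uniformly. For fixed small $a$, Hölder's inequality gives the uniform bound $|S_a f(x)|\le |B(0,a)|^{-1/p'}\|f\|_{L^p}$, while
$$|S_a f(x_1)-S_a f(x_2)|\le |B(0,a)|^{-1/p'}\|f(\cdot+x_2-x_1)-f(\cdot)\|_{L^p(\mathbb{R}^n)}$$
yields equicontinuity on the closed ball $E_A^c$. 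Ascoli–Arzelà then produces a finite $\epsilon$-net $\{S_a f_1,\dots,S_a f_m\}$ for $\{S_a f:f\in\mathcal{F}\}$ in the uniform norm on $E_A^c$. Splitting $\|S_a f-S_a f_j\|_{L^p}$ into its parts over $E_A$ and $E_A^c$ exactly as in the proof of Theorem \ref{WLP}, the first part is controlled by (iii) together with $\|S_a f-f\|_{L^p}<\epsilon$, and the second by $A^{n/p}\cdot\epsilon$ from the uniform-norm approximation. Hence $\{S_a f:f\in\mathcal{F}\}$ is totally bounded in $L^p$, and a standard triangle-inequality/diagonal argument, using completeness of $L^p$, upgrades this to relative compactness of $\mathcal{F}$ itself.

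For necessity, I would run the three contrapositive arguments used for the weak case, now with the genuine (rather than quasi-) triangle inequality, so there is no need to track quasi-norm constants. If (i) fails, a sequence $f_m\in\mathcal{F}$ with $\|f_m\|_{L^p}\to\infty$ cannot cluster anywhere. If (ii) fails, the decomposition
$$\|f_m(\cdot+y)-f_m(\cdot)\|_{L^p}\le 2\|f_m-f\|_{L^p}+\|f(\cdot+y)-f(\cdot)\|_{L^p}$$
forces $\liminf \|f_m-f\|_{L^p}\ge \delta/2$ for any candidate limit $f$, since translation is continuous in $L^p$. If (iii) fails, a similar split via $\|f_m\chi_{E_{A_m}}\|_{L^p}\le \|(f_m-f)\chi_{E_{A_m}}\|_{L^p}+\|f\chi_{E_{A_m}}\|_{L^p}$ precludes convergence. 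In each case no subsequence of $\{f_m\}$ converges in $L^p$.

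There is no serious obstacle: the result is classical (and the paper cites \cite{Kol,Tik,Rie}), and every step has a strictly easier analogue in Theorem \ref{WLP}'s proof — in particular, one avoids both the Kolmogorov-type inequality \eqref{Kolm} used to handle weak norms pointwise and the $p/2$-power trick needed there to reduce the case $0<p\le 1$ to $p=2$. Thus the only genuinely new content compared to Theorem \ref{WLP} is the use of the standard Minkowski inequality in place of its weak-$L^p$ counterpart, and one may simply remark that the proof follows \emph{mutatis mutandis} from the argument already given.
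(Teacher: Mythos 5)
The paper does not supply a proof of Lemma~\ref{LP-COMPACTNESS}; it is stated as the classical Riesz--Kolmogorov theorem and attributed to the references \cite{Kol,Tik,Rie}. Your plan --- specialize the proof of Theorem~\ref{WLP} to the strong $L^p$ norm --- is a correct way to derive it and is structurally sound: the ordinary Minkowski integral inequality replaces Proposition~\ref{M-WLP}, the Kolmogorov-type inequality~\eqref{Kolm} is no longer needed since you can apply H\"older directly, and the Ascoli--Arzel\`a step on $E_A^c$ together with the $E_A/E_A^c$ splitting carries over verbatim. You are also right that the argument works for $p=1$ without the $p/2$-power reduction the paper needs for $0<p\le 1$ in the weak case.

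One computational slip worth fixing: the exponent in your pointwise bounds should be $|B(0,a)|^{-1/p}$, not $|B(0,a)|^{-1/p'}$. H\"older gives
$$|S_af(x)|\le \frac{1}{|B(0,a)|}\,\|f\|_{L^p(\mathbb{R}^n)}\,|B(0,a)|^{1/p'}=|B(0,a)|^{-1/p}\,\|f\|_{L^p(\mathbb{R}^n)},$$
and similarly for the equicontinuity estimate. As written your bound fails for $p=1$ (it would claim $|S_af(x)|\le\|f\|_{L^1}$, which is false for small $a$). The fix does not affect the argument, since any finite $a$-dependent constant is enough to conclude boundedness and equicontinuity of $\{S_af: f\in\mathcal F\}$ on $E_A^c$.
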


As mentioned in the introduction, CMO is the closure in BMO of the space of $C^{\infty}$
functions with compact support. In \cite{U1}, it was shown that CMO can be characterized
in the following way.
\begin{lemma}\label{CMO}
 Let $f\in {\rm BMO}(\mathbb{R}^n).$ Then $f\in {\rm CMO}(\mathbb{R}^n)$ if and only if the following conditions hold:
\begin{enumerate}
  \item[(1)] $\lim_{\delta\rightarrow 0}\sup_{|Q|=\delta}\mathcal{O}(f;Q)=0$;
  \item[(2)] $\lim_{R\rightarrow \infty}\sup_{|Q|=R}\mathcal{O}(f;Q)=0$;
  \item[(3)] $\lim_{R\rightarrow \infty}\sup_{Q\cap[-d,d]^n=\empty}\mathcal{O}(f;Q)=0,$
\end{enumerate}
where
$$\mathcal{Q}(f;Q)=\frac{1}{|Q|}\int_{Q}|f(x)-f_{Q}|dx \qquad {\text and }\qquad f_{Q}=\frac{1}{|Q|}\int_{Q}f(x)dx.$$
\end{lemma}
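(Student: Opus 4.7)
I plan to prove both implications, with the necessity direction being routine and the sufficiency the substantive content.

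\textbf{Necessity.} Suppose $f \in {\rm CMO}(\mathbb{R}^n)$. Given $\epsilon > 0$, pick $g \in C_c^\infty(\mathbb{R}^n)$ with $\|f-g\|_{{\rm BMO}(\mathbb{R}^n)} < \epsilon$. The triangle inequality gives $\mathcal{O}(f;Q) \leq 2\|f-g\|_{{\rm BMO}(\mathbb{R}^n)} + \mathcal{O}(g;Q)$, so it suffices to verify (1)--(3) for the smooth compactly supported $g$. Condition (1) follows from the uniform continuity of $g$; condition (2) from the trivial bound $\mathcal{O}(g;Q) \leq 2\|g\|_{L^1}/|Q|$ when $|Q|$ is large; and condition (3) from the fact that $g$ vanishes identically on any cube $Q$ disjoint from a box containing $\mathrm{supp}(g)$. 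Letting $\epsilon \to 0$ concludes this direction.

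\textbf{Sufficiency.} Given $f \in {\rm BMO}(\mathbb{R}^n)$ satisfying (1), (2), (3), I would construct for each $\epsilon > 0$ a function $g_\epsilon \in C_c^\infty(\mathbb{R}^n)$ with $\|f - g_\epsilon\|_{{\rm BMO}(\mathbb{R}^n)} < C\epsilon$, in two independent steps. Step (a), truncation: conditions (2) and (3) force the averages $f_Q$ to stabilize along any exhausting sequence of cubes to a single constant $c_\infty$; subtracting this constant (harmless in ${\rm BMO}$) and then multiplying by a smooth cutoff $\psi_R \in C_c^\infty$ with $\psi_R \equiv 1$ on $B(0,R)$, supported in $B(0,2R)$, and $\|\nabla \psi_R\|_\infty \lesssim 1/R$, produces $\tilde f_R := \psi_R \cdot (f - c_\infty)$ with $\|(f - c_\infty) - \tilde f_R\|_{{\rm BMO}(\mathbb{R}^n)} \to 0$ as $R \to \infty$. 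Step (b), mollification: setting $g_\epsilon := \varphi_\eta * \tilde f_R$ for a standard mollifier $\varphi_\eta$, the resulting ${\rm BMO}$ error is dominated by $\sup_{|Q| \lesssim \eta^n} \mathcal{O}(\tilde f_R; Q)$, which tends to $0$ with $\eta$ by condition (1) transferred to $\tilde f_R$.

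The main obstacle is the ${\rm BMO}$ estimate in step (a). One splits an arbitrary cube $Q$ into three regimes: small cubes contained in $\{\psi_R = 1\}$, where the error is zero; very large cubes for which (2) applies directly to both $f$ and $\tilde f_R$; and intermediate cubes meeting the transition region $\{R < |x| < 2R\}$, where the Lipschitz bound on $\psi_R$ must be balanced against the oscillation of $f - c_\infty$ on cubes far from the origin, a quantity made small by (3). A related delicate point is the rigorous identification of $c_\infty$: one needs (2) and (3) to imply that the averages $f_Q$ form a Cauchy net along increasing cubes, which follows from the chaining estimate $|f_Q - f_{Q'}| \lesssim \mathcal{O}(f; Q \cup Q')$ whenever $Q \subset Q'$, using (3) to chain cubes at a fixed scale and (2) to pass between scales. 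Once these points are settled, step (b) reduces to a classical convolution estimate and the proof is complete.
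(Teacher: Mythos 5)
The paper does not prove this lemma --- it is quoted directly from Uchiyama \cite{U1} --- so there is no internal argument to compare against; your necessity direction is fine. The sufficiency direction, however, has a genuine gap in step (a). You claim that conditions (2) and (3) force the averages $f_Q$ to stabilize to a finite constant $c_\infty$ along an exhausting sequence of cubes. This is false: take $f(x)=\sqrt{\log\log(|x|+10)}$. One checks that $f\in{\rm BMO}(\mathbb{R}^n)$ and satisfies (1), (2), (3) --- the gradient $|\nabla f(x)|\approx\big(|x|\,\log|x|\,\sqrt{\log\log|x|}\big)^{-1}$ is globally bounded, giving (1), and decays at infinity, giving (3), while a dyadic decomposition of $\int_0^L|f(r)-f(L)|\,r^{n-1}\,dr$ gives $\mathcal{O}\big(f;B(0,L)\big)\lesssim\big(\log L\,\sqrt{\log\log L}\big)^{-1}\to 0$, which yields (2). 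Yet $f_{B(0,L)}\approx\sqrt{\log\log L}\to\infty$, so no $c_\infty$ exists. Your chaining estimate does not rescue this: each doubling $B(0,2^{j})\to B(0,2^{j+1})$ costs $\mathcal{O}\big(f;B(0,2^{j+1})\big)$, and (2) only guarantees these terms tend to zero, not that they are summable; for the example they decay like $1/(j\sqrt{\log j})$, whose partial sums grow like $\sqrt{\log j}$, exactly matching the divergence of $f_{B(0,2^j)}$.

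Since subtracting $c_\infty$ was supposed to make multiplication by the cutoff $\psi_R$ a small ${\rm BMO}$-perturbation, step (a) does not go through as written. The repair is to use a local normalization rather than a global constant: set $\tilde f_R=\psi_R\cdot\big(f-f_{B(0,2R)}\big)$. In the transition annulus $\{R<|x|<2R\}$ one controls $|f-f_{B(0,2R)}|$ by the oscillation of $f$ over cubes of comparable scale sitting in the annulus (small by (3)) plus a telescoping comparison of local averages over $O(1)$ doublings; combined with the Lipschitz bound $\|\nabla\psi_R\|_{L^\infty}\lesssim 1/R$, this gives a ${\rm BMO}$ error tending to zero as $R\to\infty$. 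This local renormalization is the essential device in Uchiyama's proof and is what your draft is missing; with it in place, your mollification step (b) completes the argument.
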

Since then, there have been a lot of articles concerning the boundedness and the compactness of commutators on function spaces as well as their applications in PDEs, see \cite{CT2015,CH2015,CDW,CDW2009,DW2021, GWY1,AMP}. Krantz and Li in \cite{KL12001} and \cite{KL22001} have applied commutator theory to give a compactness characterization of Hankel operators on holomorphic Hardy spaces $H^{2}(D)$, where $D$ is a bounded, strictly pseudoconvex domain in $\mathbb{C}^n$. It is perhaps for this important reason that the boundedness of $[b,T]$ attracted one's attention among researchers in harmonic analysis and PDEs.

The compactness criteria were studied by many authors in various
settings. Meanwhile, it has played an important role in the compactness results of certain bounded operators in the field of harmonic
analysis. Let $\alpha\in [0,1]$. For the locally integral function $f$ and cube $Q$, we write
$$\mathcal{O}_{\alpha}(f;Q)=\frac{1}{|Q|^{1+\alpha/n}}\int_{Q}|f(x)-f_{Q}|dx.$$
Define by ${\rm CMO}_{\alpha}(\mathbb{R}^n)$ the $C^{\infty}_{c}(\mathbb{R}^n)$ closure in $Lip_{\alpha}(\mathbb{R}^n)$. The authors in \cite{GHWY} showed that
\begin{lemma}\label{CMO2}
Let $\alpha\in (0,1)$. A $Lip_{\alpha}(\mathbb{R}^n)$ function $f$ belongs to ${\rm CMO}_{\alpha}(\mathbb{R}^n)$ if it satisfies the following three conditions the following three conditions:
\begin{enumerate}
  \item[(1)] $\lim_{\delta\rightarrow 0}\sup_{|Q|=\delta}\mathcal{O}_{\alpha}(f;Q)=0$;
  \item[(2)] $\lim_{R\rightarrow \infty}\sup_{|Q|=R}\mathcal{O}_{\alpha}(f;Q)=0$;
  \item[(3)] $\lim_{R\rightarrow \infty}\sup_{Q\cap[-d,d]^n=\empty}\mathcal{O}_{\alpha}(f;Q)=0.$
\end{enumerate}
\end{lemma}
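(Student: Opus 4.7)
The plan is to mirror Uchiyama's proof of the ${\rm CMO}$ characterization (Lemma \ref{CMO}) in the fractional setting. Since the preliminaries already record the equivalence $\|f\|_{Lip_\alpha(\mathbb{R}^n)} \approx \sup_Q \mathcal{O}_\alpha(f;Q)$, it suffices to exhibit, for each $\epsilon>0$, a function $g\in C_c^{\infty}(\mathbb{R}^n)$ with $\sup_Q \mathcal{O}_\alpha(f-g;Q) < \epsilon$. This reduces the problem to direct approximation in the Campanato seminorm.

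For the construction, I would first use condition (3) to fix $d$ so large that $\mathcal{O}_\alpha(f;Q)<\epsilon$ whenever $Q\cap[-d,d]^n=\emptyset$, then pick a smooth cutoff $\phi_d$ with $\phi_d\equiv 1$ on $[-2d,2d]^n$, $\mathrm{supp}\,\phi_d\subset[-3d,3d]^n$, and $\|\nabla\phi_d\|_\infty\lesssim 1/d$. Next, using condition (1), fix $\delta\in(0,d)$ so small that $\mathcal{O}_\alpha(f;Q)<\epsilon$ whenever $|Q|\leq\delta^n$. Let $\psi_\delta$ be a standard mollifier supported in $B(0,\delta)$ and set $g:=(f\phi_d)*\psi_\delta\in C_c^{\infty}(\mathbb{R}^n)$.

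The estimate of $\mathcal{O}_\alpha(f-g;Q)$ would then be carried out by a four-case analysis on $Q$: (i) if $|Q|\le\delta^n$, condition (1) controls $\mathcal{O}_\alpha(f;Q)$, and $\mathcal{O}_\alpha(g;Q)$ is in turn dominated by $\mathcal{O}_\alpha(f\phi_d;Q')$ on a cube $Q'$ of side length $\lesssim\delta$, hence small; (ii) if $|Q|\geq R^n$ for $R$ chosen via condition (2), then $\mathcal{O}_\alpha(f;Q)<\epsilon$ directly, while $\mathcal{O}_\alpha(g;Q)\lesssim|Q|^{-1-\alpha/n}\|g\|_{L^1}$ is negligible; (iii) if $Q$ is disjoint from $[-4d,4d]^n$, then $g\equiv 0$ on $Q$ (since $\delta<d$) and condition (3) gives $\mathcal{O}_\alpha(f-g;Q)=\mathcal{O}_\alpha(f;Q)<\epsilon$; (iv) the remaining cubes lie in a bounded region with moderate side length $\delta^{1/2}\lesssim\ell(Q)\lesssim R$, where a pointwise bound $|f-(f\phi_d)*\psi_\delta|(x)\lesssim\delta^\alpha\|f\|_{Lip_\alpha}$ on points well inside $[-2d,2d]^n$ yields $\mathcal{O}_\alpha(f-g;Q)\lesssim\delta^{\alpha/2}\|f\|_{Lip_\alpha}$.

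The genuine obstacle is in case (iv) for the transition cubes that straddle the boundary of $\mathrm{supp}\,\phi_d$. There the truncation error $f(1-\phi_d)$ is supported in only part of $Q$, so $g$ cannot simply be replaced by $f*\psi_\delta$. The remedy is to split the mean against the level sets of $\phi_d$: on the inner slice where $\phi_d=1$ the previous Lipschitz pointwise bound applies, while on the outer slice one compares with the original $f$ and invokes condition (3) together with the estimate $\|\nabla\phi_d\|_\infty\lesssim 1/d$ to dominate the oscillation by $\epsilon$. Assembling the four cases and letting $\delta\to 0$ after fixing $d$ sufficiently large yields $\sup_Q\mathcal{O}_\alpha(f-g;Q)\lesssim\epsilon$, placing $f$ in the $Lip_\alpha$-closure of $C_c^\infty(\mathbb{R}^n)$, i.e.\ in ${\rm CMO}_\alpha(\mathbb{R}^n)$.
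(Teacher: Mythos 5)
The paper does not prove this lemma; it simply attributes it to \cite{GHWY} and uses it as a black box, so there is no internal proof to compare against. On its own merits, your proposal follows the standard Uchiyama-type construction (truncate with a smooth cutoff $\phi_d$, mollify at a scale $\delta$, and estimate $\mathcal{O}_\alpha(f-g;Q)$ by splitting cubes by size and position), which is indeed the argument one expects \cite{GHWY} to carry out, so the overall strategy is sound.

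Two places deserve tightening. First, the dependency order of the parameters should be made explicit: fix $R$ first via condition (2), then choose $d\gg R$ via condition (3), and only then choose $\delta$ small via condition (1). This ordering is needed precisely in your transition case: a cube $Q$ with $\ell(Q)<R$ that meets the annulus where $\phi_d$ varies will then automatically satisfy $Q\cap[-d,d]^n=\emptyset$, so condition (3) applies to it. Second, the small-cube case should not treat $\mathcal{O}_\alpha(g;Q)$ via $\|\nabla g\|_\infty$, since that quantity blows up as $\delta\to 0$; instead use the averaging bound $\mathcal{O}_\alpha(g;Q)\le\sup_{|y|\le\delta}\mathcal{O}_\alpha(f\phi_d;Q-y)$, combined with a product estimate of the form
$$\mathcal{O}_\alpha(f\phi_d;Q')\lesssim \mathcal{O}_\alpha(f;Q')+\bigl(\sup_{\mathrm{supp}\,\phi_d}|f|\bigr)\,\|\nabla\phi_d\|_\infty\,\ell(Q')^{1-\alpha},$$
where (normalizing $f(0)=0$) $\sup_{\mathrm{supp}\,\phi_d}|f|\lesssim d^\alpha$ and $\|\nabla\phi_d\|_\infty\lesssim d^{-1}$, so the second term is $\lesssim d^{\alpha-1}\ell(Q')^{1-\alpha}$ and vanishes as $d\to\infty$ precisely because $\alpha<1$. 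The same product estimate is what you need for the outer slice of the transition cubes: the oscillation contributed by $(1-\phi_d)$ is $\lesssim d^\alpha\cdot d^{-1}\cdot R^{1-\alpha}=(R/d)^{1-\alpha}\cdot R^\alpha\,d^{\alpha-1}$, which is small once $d\gg R$, and the remaining oscillation of $f$ itself is controlled by condition (3). Finally, your two thresholds are inconsistent (case (i) uses $\ell(Q)\le\delta$, case (iv) assumes $\ell(Q)\gtrsim\delta^{1/2}$); pick a single intermediate threshold $\sigma$ with $\delta\ll\sigma\ll 1$ (e.g.\ $\sigma=\delta^{1/2}$) and apply the averaging bound above to cubes with $\ell(Q)\le\sigma$. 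With these corrections the four-case argument closes and yields $\sup_Q\mathcal{O}_\alpha(f-g;Q)\lesssim\epsilon$, as claimed.
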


Now, we give the proofs of the compactness results in Hardy type spaces.

\vspace{0.3cm}
\textbf{Proof of Theorem \ref{thmHardyb}. }
Assume that $b\in {\rm CMO}(\mathbb{R}^n)$ and $E$ be a bounded set in $H_{b}^{1}(\mathbb{R}^n).$ It is enough to show that $[b,T](E)$ is relatively compact in $L^{1}(\mathbb{R}^n)$.

By the results in \cite{P1995}, the commutator $[b,T]$ maps form $H^{1}_{b}(\mathbb{R}^n)$ into $L^{1}(\mathbb{R}^n)$ with the estimates of the form
\begin{equation}\label{I-1}
\|[b,T](f)\|_{L^{1}(\mathbb{R}^n)}\lesssim \|b\|_{\rm BMO(\mathbb{R}^n)}\|f\|_{H^{1}_{b}(\mathbb{R}^n)}.
\end{equation}

On the other hand, since $b\in {\rm CMO}(\mathbb{R}^n)$, then for any $\epsilon>0$ there exists a function $b_{\epsilon}\in C^{\infty}_{c}(\mathbb{R}^n)$ such that
\begin{equation*}
\|b-b_{\epsilon}\|_{\rm BMO(\mathbb{R}^n)}<\epsilon.
\end{equation*}
P\'{e}rez in \cite{P1995} proved that the commutator $[b,T]$ of the function $b\in {\rm BMO}(\mathbb{R}^n)$
is bounded from $H^{1}_{b}(\mathbb{R}^n)$ to $L^{1}(\mathbb{R}^{n})$, which shows that
\begin{align*}
\|[b,T](f)\|_{L^{1}(\mathbb{R}^n)}&\lesssim \|[b-b_{\epsilon},T](f)\|_{L^{1}(\mathbb{R}^n)}+\|[b_{\epsilon},T](f)\|_{L^{1}(\mathbb{R}^n)}\\
&\lesssim \|b-b_{\epsilon}\|_{\rm BMO(\mathbb{R}^n)}\|f\|_{H^{1}_{b}(\mathbb{R}^n)}+\|[b_{\epsilon},T](f)\|_{L^{1}(\mathbb{R}^n)}\\
&\lesssim \epsilon+\|[b_{\epsilon},T](f)\|_{L^{1}(\mathbb{R}^n)}
\end{align*}
for all $f\in E$. Moreover, for any $b$-atom $a$, we have
\begin{align*}
\Big|\int_{\mathbb{R}^n}a(y)b_{\epsilon}(y)dy\Big|&=\Big|\int_{\mathbb{R}^n}a(y)\big(b_{\epsilon}(y)-b(y)\big)dy\Big|\\
&\leq \|a\|_{H^{1}(\mathbb{R}^n)}\|b-b_{\epsilon}\|_{\rm BMO(\mathbb{R}^n)}\leq \epsilon.
\end{align*}
Consequently, our task is to show that $b_{\epsilon},T$ is relatively compact in $L^{1}(\mathbb{R}^n)$. Given the definition of the space $H^{1}_{b}(\mathbb{R}^n)$, our proof hinges on demonstrating that for a $b-$atom $a$, the function $b_{\epsilon},T$ fulfills the conditions (2)-(3) as stipulated in Lemma \ref{LP-COMPACTNESS}.

Next, we show that $[b_{\epsilon},T](a)$ also satisfies (2). Indeed, suppose that ${\rm supp}(b_{\epsilon})\subset B_{R_{\epsilon}}$ for some $R_{\epsilon}>1$.
Then, for any $f\in E$ and $x\in B_{R}^{c}$ with $R>2R_{\epsilon}$, we get that
\begin{align*}
b_{\epsilon}(x)T(a)(x)=0
\end{align*}
and for some $y_{0}\in B(0,R_{\epsilon})$, we have
$$|x-y_{0}|\approx |x-y|\approx |x|$$
for any $y\in B_{R_{\epsilon}}$, then we obtain
\begin{align*}
&\big|[b_{\epsilon},T](a)(x)\big|=\big|T(b_{\epsilon}a)(x)\big|\\
&=\Big|\int_{\mathbb{R}^n}K(x,y)b_{\epsilon}(y)a(y)dy\Big|\\
&=\Big|\int_{\mathbb{R}^n}K(x,y)b_{\epsilon}(y)a(y)dy-\int_{\mathbb{R}^n}K(x,y_{0})b_{\epsilon}(y)a(y)dy\Big|\\
&\lesssim \int_{\mathbb{R}^n}\big|K(x,y)-K(x,y_{0})\big||b_{\epsilon}(y)||a(y)|dy\\
&\lesssim \frac{|y-y_{0}|^{\gamma}}{|x-y_{0}|^{n+\gamma}}\|a\|_{L^{\infty}(\mathbb{R}^n)}\|b_{\epsilon}\|_{L^{\infty}(\mathbb{R}^n)}|Q|\\
&\lesssim |x|^{-n-\gamma}.
\end{align*}
It follows that
\begin{align*}
\|[b_{\epsilon},T](a)\chi_{B_{R}^{c}}\|_{L^{1}(\mathbb{R}^n)}
&\lesssim \int_{|x|>R}|x|^{-n-\gamma}dx\lesssim R^{-\gamma}.
\end{align*}
This implies that
\begin{equation}\label{I-2}
\|[b_{\epsilon},T](a)(x)\chi_{B_{R}^{c}}(x)\|_{L^{1}(\mathbb{R}^n)}\rightarrow 0, \ \text{as} \ R\rightarrow \infty.
\end{equation}

Finally, we give the estimate for the condition (3). To do this, we prove that for any $\epsilon>0$, there exists a sufficiently small $|t|$ (independent
of $a$), then
\begin{equation}\label{Iepsilon}
\|[b_{\epsilon},T](a)(\cdot+t)-[b_{\epsilon},T](a)(\cdot)\|_{L^{1}(\mathbb{R}^n)}\lesssim \epsilon.
\end{equation}
We write
\begin{align*}
&[b_{\epsilon},T](a)(x+t)-[b_{\epsilon},T](a)(x)\\
&=\int_{\mathbb{R}^n}(b_{\epsilon}(x+t)-b_{\epsilon}(y))K(x+t,y)a(y)dy-\int_{\mathbb{R}^n}(b_{\epsilon}(x)-b_{\epsilon}(y))K(x,y)a(y)dy\\
&=\int_{|x-y|>\delta}(b_{\epsilon}(x+t)-b_{\epsilon}(x))K(x,y)a(y)dy\\
&\quad+\int_{|x-y|>\delta}(b_{\epsilon}(x+t)-b_{\epsilon}(x))\big(K(x+t,y)-K(x,y)\big)a(y)dy\\
&\quad+\int_{|x-y|\leq \delta}(b_{\epsilon}(y)-b_{\epsilon}(x))K(x,y)a(y)dy\\
&\quad+\int_{|x-y|\leq \delta}(b_{\epsilon}(x+t)-b_{\epsilon}(y))K(x+t,y)\big)a(y)dy\\
&=:I_{1}+I_{2}+I_{3}+I_{4},
\end{align*}
where, for a convenient choice of $\delta>0$ to be specified later. If we now let $T^*$ denote the maximal truncated bilinear singular integral operator
$$T^*(f)(x)=\sup_{\delta>0}\Big|\int_{|x-y|>\delta}K(x,y)f(y)dy\Big|,$$
then
\begin{align*}
|I_{1}|&\leq |b_{\epsilon}(x+t)-b_{\epsilon}(x)|\Big|\int_{|x-y|>\delta}K(x,y)a(y)dy\Big|\\
&\lesssim \|\nabla b_{\epsilon}\|_{L^{\infty}(\mathbb{R}^n)}|t|T^*(a)(x).
\end{align*}
In \cite{GT2002}, Grafakos and Torres proved that $T^{*}$ maps from $H^{1}(\mathbb{R}^n)$ into $L^{1}(\mathbb{R}^n)$. Then
\begin{equation}\label{4I1}
\begin{aligned}
\|I_{1}\|_{L^{1}(\mathbb{R}^n)}&\lesssim |t|.
\end{aligned}
\end{equation}
In order to estimate $I_{2}$, thanks to the smoothness of the kernel $K$ and the change of variables, we obtain
\begin{equation*}
\begin{aligned}
|I_{2}|&\lesssim \|b_{\epsilon}\|_{L^{\infty}(\mathbb{R}^n)}|t|^{\gamma}\int_{|x-y|>\delta}\frac{|a(y)|}{|x-y|^{n+\gamma}}dy\\
&\lesssim  \frac{|t|^{\gamma}}{\delta^{\gamma}}\|b_{\epsilon}\|_{L^{\infty}(\mathbb{R}^n)}M(a)(x)
\end{aligned}
\end{equation*}
and
\begin{equation}\label{4I2}
\begin{aligned}
\|I_{2}\|_{L^{1}(\mathbb{R}^n)}\lesssim \frac{|t|^{\gamma}}{\delta^{\gamma}}.
\end{aligned}
\end{equation}
To estimate the third term, we use the size estimate of the Calder\'{o}n-Zygmund
kernel $K$. We have
\begin{equation*}
\begin{aligned}
|I_{3}|&\lesssim \|\nabla b_{\epsilon}\|_{L^{\infty}(\mathbb{R}^n)}\int_{|x-y|\leq \delta}\frac{|a(y)|}{|x-y|^{n-1}}dy\\
&\lesssim  \|\nabla b_{\epsilon}\|_{L^{\infty}(\mathbb{R}^n)}\delta M(a)(x)
\end{aligned}
\end{equation*}
and
\begin{equation}\label{4I3}
\begin{aligned}
\|I_{3}\|_{L^{1}(\mathbb{R}^n)}\lesssim \delta.
\end{aligned}
\end{equation}
Similarly, we also obtain
\begin{equation}\label{4I4}
\begin{aligned}
\|I_{4}\|_{L^{1}(\mathbb{R}^n)}\lesssim \delta.
\end{aligned}
\end{equation}

Let us now define $t_{0}=\frac{\epsilon^{2}}{1+\|b_{\epsilon}\|_{L^{\infty}(\mathbb{R}^n)}+\|\nabla b_{\epsilon}\|_{L^{\infty}(\mathbb{R}^n)}}$. For each $0<|t|<t_0$ and select $\delta=|t|/\epsilon$. Inequalities \eqref{4I1}, \eqref{4I2}, \eqref{4I3} and \eqref{4I4} imply \eqref{Iepsilon}.
Combining this with the inequalities \eqref{I-1} and \eqref{I-2}, we conclude that $[b,T]$ is a compact. \qed

\vspace{0.3cm}

\textbf{Proof of Theorem \ref{thmHardy}. }
Assume that $b\in {\rm CMO}_{\alpha}(\mathbb{R}^n)$ and $E$ be a bounded set in $H^{1}(\mathbb{R}^n).$ It is enough to show that $[b,T](E)$ is relatively compact in $L^{q}(\mathbb{R}^n)$ with $q=\frac{n}{n-\alpha}>1$.

Since $b\in {\rm CMO}_{\alpha}(\mathbb{R}^n)$, then for any $\epsilon>0$ there exists a function $b_{\epsilon}\in C^{\infty}_{c}(\mathbb{R}^n)$ such that
\begin{equation*}
\|b-b_{\epsilon}\|_{Lip_{\alpha}(\mathbb{R}^n)}<\epsilon.
\end{equation*}
In \cite{LWY}, Lu, Wu and Yang proved that the commutator $[b,T]$ of the function $b\in Lip_{\alpha}(\mathbb{R}^n)$
is bounded from $H^{1}(\mathbb{R}^n)$ to $L^{\frac{n}{n-\alpha}}(\mathbb{R}^{n})$, which shows that
\begin{align*}
\|[b,T](f)\|_{L^{q}(\mathbb{R}^n)}&\lesssim \|[b-b_{\epsilon},T](f)\|_{L^{q}(\mathbb{R}^n)}+\|[b_{\epsilon},T](f)\|_{L^{q}(\mathbb{R}^n)}\\
&\lesssim \|b-b_{\epsilon}\|_{Lip_{\alpha}(\mathbb{R}^n)}\|f\|_{H^{1}(\mathbb{R}^n)}+\|[b_{\epsilon},T](f)\|_{L^{q}(\mathbb{R}^n)}\\
&\lesssim \epsilon+\|[b_{\epsilon},T](f)\|_{L^{q}(\mathbb{R}^n)}
\end{align*}
for all $f\in E$. Then, it suffices to demonstrate that $[b_{\epsilon},T](E)$ is relatively compact in $L^{q}(\mathbb{R}^n)$.
In addition, we obtain that $[b_{\epsilon},T](E)$ satisfies (1) in Lemma \ref{LP-COMPACTNESS}.

Next, we show that  $[b_{\epsilon},T](E)$ also satisfies (2). Indeed, suppose that ${\rm supp}(b_{\epsilon})\subset B_{R_{\epsilon}}$ for some $R_{\epsilon}>1$.
Then, for any $f\in E$ and $x\in B_{R}^{c}$ with $R>2R_{\epsilon}$, we get that
\begin{align*}
b_{\epsilon}(x)T(f)(x)=0 \qquad \text{and} \qquad |[b_{\epsilon},T](f)(x)|=|T(b_{\epsilon}f)(x)|.
\end{align*}
For $x\in B_{R}^{c}$ and $y\in B_{R_{\epsilon}}$, we get $|x-y|\approx |x|$ and
$$|[b_{\epsilon},T](f)(x)\chi_{B_{R}^{c}}(x)|\lesssim |x|^{-n}\chi_{B_{R}^{c}}(x)\|b_{\epsilon}f\|_{L^{1}(\mathbb{R}^n)}
\lesssim |x|^{-n}\chi_{B_{R}^{c}}(x)\|b_{\epsilon}\|_{\rm BMO(\mathbb{R}^n)}\|f\|_{H^{1}(\mathbb{R}^n)}.$$
It follows that
\begin{align*}
\|[b_{\epsilon},T](f)(x)\chi_{B_{R}^{c}}(x)\|_{L^{q}(\mathbb{R}^n)}&\leq \|[b_{\epsilon},T](f)(x)\chi_{B_{R}^{c}}(x)\|_{L^{q}(\mathbb{R}^n)}\\
&\lesssim \|f\|_{H^{1}(\mathbb{R}^n)}\bigg(\int_{|x|>R}|x|^{-nq}dx\bigg)^{1/q}\\
&\lesssim R^{-n+n/q}\|f\|_{H^{1}(\mathbb{R}^n)}.
\end{align*}
This implies that $\|[b_{\epsilon},T](f)(x)\chi_{B_{R}^{c}}(x)\|_{L^{q}(\mathbb{R}^n)}\rightarrow 0$, as $R\rightarrow \infty$.

To prove the condition (3), we prove that for every $\delta>0$, if $|t|$ is sufficiently small(merely depending on $\delta$), then for every $f\in E$,
\begin{equation}\label{delta}
\|[b_{\epsilon},T](f)(\cdot+t)-[b_{\epsilon},T](f)(\cdot)\|_{L^{q}(\mathbb{R}^n)}\lesssim \delta^{\eta},
\end{equation}
where $\eta=\min\{1-\alpha,\alpha+3\gamma\}.$ We write
\begin{align*}
&[b_{\epsilon},T](f)(x+t)-[b_{\epsilon},T](f)(x)\\
&=\int_{\mathbb{R}^n}(b_{\epsilon}(x+t)-b_{\epsilon}(y))K(x+t,y)f(y)dy-\int_{\mathbb{R}^n}(b(x)-b(y))K(x,y)f(y)dy\\
&=\int_{|x-y|>\delta^{-1}|t|}(b_{\epsilon}(x+t)-b_{\epsilon}(x))K(x,y)f(y)dy\\
&\quad+\int_{|x-y|>\delta^{-1}|t|}(b_{\epsilon}(x+t)-b_{\epsilon}(x))\big(K(x+t,y)-K(x,y)\big)f(y)dy\\
&\quad+\int_{|x-y|\leq \delta^{-1}|t|}(b_{\epsilon}(y)-b_{\epsilon}(x))K(x,y)f(y)dy\\
&\quad+\int_{|x-y|\leq \delta^{-1}|t|}(b_{\epsilon}(x+t)-b_{\epsilon}(y))K(x+t,y)\big)f(y)dy\\
&=:J_{1}+J_{2}+J_{3}+J_{4}.
\end{align*}
We first consider $J_{1}$. Thanks to the size of the kernel $K$, we get
\begin{equation}\label{5J1}
\begin{aligned}
|J_{1}|&\leq |b_{\epsilon}(x+t)-b_{\epsilon}(x)|\Big|\int_{|x-y|>\delta^{-1}|t|}K(x,y)f(y)dy\Big|\\
&\lesssim \|\nabla b_{\epsilon}\|_{L^{\infty}(\mathbb{R}^n)}|t|\int_{|x-y|>\delta^{-1}|t|}\frac{|f(y)|}{|x-y|^{n}}dy\\
&\lesssim \|\nabla b_{\epsilon}\|_{L^{\infty}(\mathbb{R}^n)}|t|\int_{|x-y|>\delta^{-1}|t|}\frac{|f(y)|}{|x-y|^{n-\alpha}}\cdot \frac{1}{|x-y|^{\alpha}}dy\\
&\lesssim \|\nabla b_{\epsilon}\|_{L^{\infty}(\mathbb{R}^n)}|t|(\delta^{-1}|t|)^{\alpha}I_{\alpha}(|f|)(x),
\end{aligned}
\end{equation}
where $I_{\alpha}$ stands for the fractional operator,
$$I_{\alpha}(f)(x)=\int_{\mathbb{R}^n}\frac{f(y)}{|x-y|^{n-\alpha}}dy.$$
By the $(H^{1}(\mathbb{R}^n),L^{q}(\mathbb{R}^n))$ boundedness of fractional integral operator $I_{\alpha}$, we obtain
\begin{equation}\label{4M1}
\begin{aligned}
\|J_{1}\|_{L^{q}(\mathbb{R}^n)}&\lesssim \|\nabla b_{\epsilon}\|_{L^{\infty}(\mathbb{R}^n)}|t|(\delta^{-1}|t|)^{\alpha}\|I_{\alpha}(f)\|_{L^{q}(\mathbb{R}^n)}\\
&\lesssim \|\nabla b_{\epsilon}\|_{L^{\infty}(\mathbb{R}^n)}|t|(\delta^{-1}|t|)^{\alpha}\|f\|_{H^{1}(\mathbb{R}^n)}\lesssim \delta^{-\alpha}|t|^{1+\alpha}.
\end{aligned}
\end{equation}
As for $J_2$, applying the smoothness of the kernel $K$, we deduce that
\begin{equation}\label{5J2}
\begin{aligned}
|J_{2}|&\lesssim \|b_{\epsilon}\|_{L^{\infty}(\mathbb{R}^n)}|t|^{\gamma}\int_{|x-y|>\delta^{-1}t}\frac{|f(y)|}{|x-y|^{n+\gamma}}dy\\
&\lesssim  \|b_{\epsilon}\|_{L^{\infty}(\mathbb{R}^n)}|t|^{\gamma}(\delta^{-1}|t|)^{\alpha+\gamma}I_{\alpha}(|f|)(x)
\end{aligned}
\end{equation}
and
\begin{equation}\label{4M2}
\begin{aligned}
\|J_{2}\|_{L^{q}(\mathbb{R}^n)}\lesssim \delta^{-\alpha-\gamma}|t|^{\alpha+2\gamma}.
\end{aligned}
\end{equation}

Next, we consider $J_{3}$. The H\"{o}lder inequality gives us that
\begin{equation}\label{5J3}
\begin{aligned}
|J_{3}|&\lesssim \|\nabla b_{\epsilon}\|_{L^{\infty}(\mathbb{R}^n)}\int_{|x-y|\leq \delta^{-1}|t|}\frac{|f(y)|}{|x-y|^{n-1}}dy\\
&\lesssim  \|\nabla b_{\epsilon}\|_{L^{\infty}(\mathbb{R}^n)}(\delta^{-1}|t|)^{1-\alpha}I_{\alpha}(|f|)(x)
\end{aligned}
\end{equation}
and
\begin{equation}\label{4M3}
\begin{aligned}
\|J_{3}\|_{L^{q}(\mathbb{R}^n)}\lesssim \delta^{-1+\alpha}|t|^{1-\alpha}.
\end{aligned}
\end{equation}
Similarly, we also obtain
\begin{equation}\label{5J4}
\begin{aligned}
|J_{4}|&\lesssim \|\nabla b_{\epsilon}\|_{L^{\infty}(\mathbb{R}^n)}\int_{|x-y|\leq \delta^{-1}|t|}\frac{|f(y)|}{|x+t-y|^{n-1}}dy\\
&\lesssim  \|\nabla b_{\epsilon}\|_{L^{\infty}(\mathbb{R}^n)}(\delta^{-1}|t|)^{1-\alpha}I_{\alpha}(|f|)(x+t)
\end{aligned}
\end{equation}
and
\begin{equation}\label{4M4}
\begin{aligned}
\|J_{4}\|_{L^{q}(\mathbb{R}^n)}\lesssim  \delta^{-1+\alpha}|t|^{1-\alpha}.
\end{aligned}
\end{equation}

A combination of the inequalities \eqref{4M1}, \eqref{4M2}, \eqref{4M3} and \eqref{4M4} provides us
$$\|[b_{\epsilon},T](f)(\cdot+t)-[b_{\epsilon},T](f)(\cdot)\|_{L^{q}(\mathbb{R}^n)}\lesssim  \delta^{-\alpha}|t|^{1+\alpha}+\delta^{-\alpha-\gamma}|t|^{\alpha+2\gamma}+\delta^{-1+\alpha}|t|^{1-\alpha}.$$
We assume that $|t|\leq \delta^{2}<1$, then
$$\|[b_{\epsilon},T](f)(\cdot+t)-[b_{\epsilon},T](f)(\cdot)\|_{L^{q}(\mathbb{R}^n)}\lesssim  \delta^{\eta},$$
where $\eta=\min\{1-\alpha,\alpha+3\gamma\}<1$. Thus we prove the inequality \eqref{delta} and $[b,T]$ is compact from $H^{1}(\mathbb{R}^n)$ to $L^{q}(\mathbb{R}^n)$. \qed

\section{Characterization of compactness of commutator in the endpoint space}

In this section, we will proceed with the proof of the following auxiliary lemmas,
which we need to prove our main results. We first recall a technical lemma about certain $H^\rho(\mathbb{R}^n)$ (see, \cite{WZ-JMAA} for $\rho=1$ and \cite{WZS-AFA} for $\rho<1$).
\begin{lemma} \label{lem3.1}
Let $\frac{n}{n+1}<\rho\le1$ and $f$ be a function satisfying the following estimates:
\begin{itemize}
  \item [\it (i)]  $\int_{\mathbb{R}^n}f(x)dx=0;$
  \item [\it (ii)] there exist balls $B_1=B(x_1,r)$ and $B_2=B(x_2,r)$ for some $x_1,x_2\in \mathbb{R}^n$ and $r>0$ such that
     $$|f(x)|\le h_1(x)\chi_{B_1}(x)+h_2(x)\chi_{B_2}(x),$$
     where $\|h_{i}\|_{L^{q}(\mathbb{R}^n)}\leq C|B_{i}|^{1/q-1/\rho}$ with $1<q\leq \infty$;
  \item [\it (iii)] $|x_1-x_2|= Nr$.
\end{itemize}
Then, $f\in H^\rho(\mathbb{R}^n)$ and there exists a positive constant $C$ independent of $x_1,x_2,r$ such that
$$ \|f\| _{H^{1} (\mathbb{R}^n)}\le C\log N ~~\text{for}~~ \rho=1 $$
and
$$ \|f\| _{H^{\rho}(\mathbb{R}^n)}\le CN^{n(\frac{1}{\rho}-1)} ~~\text{for}~~0<\rho<1. $$
\end{lemma}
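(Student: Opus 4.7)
The plan is to decompose $f$ as a sum of $(\rho,q,0)$-atoms in the sense of Definition~\ref{defH} and bound the resulting atomic norm. Since $\rho > n/(n+1)$, the integer $l$ in Definition~\ref{defH} is $0$, so only the single cancellation $\int a = 0$ is required. First I would apply H\"older's inequality to condition $(ii)$, giving $\|h_i\|_{L^1(\mathbb{R}^n)} \le C|B_i|^{1-1/\rho} \approx r^{n(1-1/\rho)}$. Setting $c_i := \int f\chi_{B_i}\,dx$, condition $(i)$ yields $c_1 = -c_2 =: c$ with $|c| \lesssim r^{n(1-1/\rho)}$. Now split $f = A_1 + A_2 + G$, where
$$A_i := f\chi_{B_i} - \frac{c_i}{|B_i|}\chi_{B_i}, \qquad G := c\left(\frac{\chi_{B_1}}{|B_1|} - \frac{\chi_{B_2}}{|B_2|}\right).$$
Each $A_i$ has mean zero, support in $B_i$, and satisfies $\|A_i\|_{L^q} \le C|B_i|^{1/q - 1/\rho}$, so it is an $O(1)$ multiple of a $(\rho,q,0)$-atom and contributes harmlessly to $\|f\|_{H^\rho}$. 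The essential task is to express $G$ atomically with the correct growth in $N$.

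To this end, introduce the paired doubling balls $\tilde B_k^{(i)} := B(x_i, 2^k r)$ for $i = 1, 2$ and $k = 0, 1, \ldots, K$, with $K := \lceil \log_2 N \rceil$, and set $g_k := \chi_{\tilde B_k^{(1)}}/|\tilde B_k^{(1)}| - \chi_{\tilde B_k^{(2)}}/|\tilde B_k^{(2)}|$. Telescoping yields
$$G = c g_0 = c\sum_{k=0}^{K-1}(g_k - g_{k+1}) + c\,g_K.$$
Each summand $g_k - g_{k+1}$ splits as the difference of two pieces, each supported in one $\tilde B_{k+1}^{(i)}$, with mean zero and $L^\infty$-bound $\lesssim (2^k r)^{-n}$. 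Regarding such a piece as a $(\rho,\infty,0)$-atom on $\tilde B_{k+1}^{(i)}$, its normalization constant is $\lesssim (2^k r)^{-n}|\tilde B_{k+1}^{(i)}|^{1/\rho}\approx 2^{kn(1/\rho-1)}\,r^{n(1/\rho-1)}$; after multiplying by $|c|\lesssim r^{n(1-1/\rho)}$ the $r$-powers cancel, leaving an atomic coefficient $\lesssim 2^{kn(1/\rho-1)}$. The terminal piece $cg_K$ is handled analogously: since $2^K r \gtrsim Nr$, both $\tilde B_K^{(i)}$ sit inside a single ball of radius $\approx Nr$, $\|g_K\|_\infty \lesssim (Nr)^{-n}$, and one extracts a single atom with coefficient $\lesssim N^{n(1/\rho-1)}$.

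Summing the atomic contributions, for $\rho = 1$ every coefficient is $O(1)$ and there are $K+1 \approx \log N$ of them, so $\|G\|_{H^1} \lesssim \log N$; for $n/(n+1) < \rho < 1$,
$$\|G\|_{H^\rho}^{\rho} \lesssim \sum_{k=0}^{K} 2^{kn\rho(1/\rho-1)} = \sum_{k=0}^{K} 2^{kn(1-\rho)} \lesssim 2^{Kn(1-\rho)} \approx N^{n(1-\rho)},$$
giving $\|G\|_{H^\rho} \lesssim N^{n(1/\rho-1)}$. Combined with the $O(1)$ contributions from $A_1,A_2$, this proves the lemma. The main obstacle I anticipate is the precise bookkeeping of atomic coefficients so that the $|c|\approx r^{n(1-1/\rho)}$ factor exactly cancels the $r$-dependence coming from $|\tilde B_k^{(i)}|^{1/\rho}(2^k r)^{-n}$, leaving only scale-free growth in $N$. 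This is precisely why one uses \emph{bilateral} (paired) telescoping rather than a one-sided chain from $B_1$ to $B_2$: a unilateral chain would terminate with a piece of the form $\chi_{\tilde B_K}/|\tilde B_K| - \chi_{B_2}/|B_2|$ carrying an inadmissible $N^n$ coefficient, whereas the paired construction keeps $g_K$ within the same dyadic scaling as the intermediate terms.
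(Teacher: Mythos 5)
The paper does not prove Lemma~\ref{lem3.1} in situ; it merely cites \cite{WZ-JMAA} (for $\rho=1$) and \cite{WZS-AFA} (for $\rho<1$), so there is no in-paper proof to compare against. Your argument is nonetheless correct and is the standard construction behind such statements in the weak-factorization literature. The decomposition $f=A_1+A_2+G$ with each $A_i$ a mass-corrected restriction $f\chi_{B_i}-\tfrac{c_i}{|B_i|}\chi_{B_i}$ and $G=c\bigl(\tfrac{\chi_{B_1}}{|B_1|}-\tfrac{\chi_{B_2}}{|B_2|}\bigr)$ is the right splitting: each $A_i$ is a bounded multiple of a $(\rho,q,0)$-atom, and the crux is the atomic resolution of $G$. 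Your \emph{paired} telescoping along $g_k=\tfrac{\chi_{B(x_1,2^kr)}}{|B(x_1,2^kr)|}-\tfrac{\chi_{B(x_2,2^kr)}}{|B(x_2,2^kr)|}$ produces $(\rho,\infty,0)$-atoms whose coefficients, after cancellation with $|c|\lesssim r^{n(1-1/\rho)}$, are scale-free and grow like $2^{kn(1/\rho-1)}$, giving $\log N$ for $\rho=1$ and $N^{n(1/\rho-1)}$ for $\rho<1$ exactly as claimed. Your remark that a unilateral chain from $B_1$ to $B_2$ fails is also on target: the terminal term $\tfrac{\chi_{2^KB_1}}{|2^KB_1|}-\tfrac{\chi_{B_2}}{|B_2|}$ would have $L^\infty$-size $\approx r^{-n}$ on a ball of radius $\approx Nr$, yielding an atomic coefficient of order $N^{n/\rho}$ after the $r$-cancellation, which is far too large. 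Two small points worth making explicit in a polished write-up: (a) the splitting $f=f\chi_{B_1}+f\chi_{B_2}$ implicitly uses $B_1\cap B_2=\emptyset$, which holds for $N>2$; the case $N\lesssim1$ is degenerate (both balls sit in $B(x_1,Cr)$, so $f$ is a single atom up to constants) and should be dispatched separately; and (b) in the $\rho<1$ summation, the contribution of $A_1,A_2$ is $O(1)$ and is absorbed into $N^{n(1-\rho)}$ only because $N\gtrsim1$, which is again part of the implicit large-$N$ regime in which the lemma is meaningful.
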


\begin{lemma}\label{lem3.2}
Let $0<\alpha<1$, $1< q_0<q=\frac{n}{n-\alpha}$ and $\rho=\frac{n}{n+\alpha}$. For any $g\in L^\infty _{c}(\mathbb{R}^n ) , h\in L^\infty _{c}(\mathbb{R}^n )$, we have
\begin{equation}\label{hardy}
 \| {\textstyle \Pi}( g, h )\|_{H^{\rho}  ( \mathbb{R} ^n)}\lesssim
 \|   g   \|_{B^{{q_0}',{(1-\frac{q_0}{q})n}} ( \mathbb{R} ^n)}   \|  h\|_{ L^{1 }   ( \mathbb{R} ^n) },
\end{equation}
where $\Pi(g,h):=gT^{*}(h)-hT(g)$.
\end{lemma}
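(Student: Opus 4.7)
The plan is to reduce by linearity to the case where $g=b$ is a single $(q_0',(1-q_0/q)n)$-block supported in some ball $B=B(x_0,r)$ with $\|b\|_{L^{q_0'}}\le r^{n/q-n/q_0}$. Using the block decomposition $g=\sum_j m_j b_j$ (with $\sum_j|m_j|\approx\|g\|_{B^{q_0',(1-q_0/q)n}}$), linearity of $\Pi(\cdot,h)$, and the $\rho$-quasi-subadditivity $\|f_1+f_2\|_{H^\rho}^\rho\le\|f_1\|_{H^\rho}^\rho+\|f_2\|_{H^\rho}^\rho$, it suffices to prove $\|\Pi(b,h)\|_{H^\rho}\lesssim\|h\|_{L^1}$ uniformly in blocks $b$. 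The $L^2$-adjoint identity $\int bT^*u\,dx=\int u Tb\,dx$ ensures $\int\Pi(b,u)\,dx=0$ for every $u\in L^\infty_c$, so all the pieces produced in the following decomposition will have mean zero automatically.

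For such a block, I would decompose $h$ annularly: $h=h\chi_{2B}+\sum_{k\ge 1}h_k$ with $h_k=h\chi_{A_k}$, $A_k=2^{k+1}B\setminus 2^kB$. The near piece $\Pi(b,h\chi_{2B})$ is a single scaled $H^\rho$-atom supported in $2B$, handled via $L^{q_0'}$-boundedness of $T,T^*$ together with the mean-zero property. For each far piece ($k\ge 1$), cover $A_k$ by $\lesssim 2^{kn}$ balls $B^{(k)}_i=B(y_i,r)$ of radius $r$ and write $\Pi(b,h_k)=\sum_i\Pi(b,h_k\chi_{B^{(k)}_i})$. Each summand has mean zero and is supported in the two-ball union $B\cup B^{(k)}_i$ with $|x_0-y_i|\sim 2^k r$, exhibiting exactly the structure required by Lemma \ref{lem3.1} with $B_1=B$, $B_2=B^{(k)}_i$, and $N\sim 2^k$ (amplification $N^{n(1/\rho-1)}=2^{k\alpha}$).

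The key technical step is to furnish sharp $L^{q_0'}$ bounds for the two support pieces. Writing $\lambda_{k,i}=T^*(h_k\chi_{B^{(k)}_i})(x_0)$ and $\mu_{k,i}=Tb(y_i)$, I would decompose
\[
\Pi(b,h_k\chi_{B^{(k)}_i})=b\bigl[T^*(h_k\chi_{B^{(k)}_i})-\lambda_{k,i}\bigr]-h_k\chi_{B^{(k)}_i}\bigl[Tb-\mu_{k,i}\bigr]+\bigl[\lambda_{k,i}b-\mu_{k,i}h_k\chi_{B^{(k)}_i}\bigr].
\]
The first two terms gain a decay factor $2^{-k(n+\gamma)}$ from the H\"older kernel smoothness $|K(y,x)-K(y,x_0)|\lesssim r^{\gamma}/|y-x_0|^{n+\gamma}$ (and its symmetric counterpart in the first argument). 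The exponent $(1-q_0/q)n$ in the block norm is chosen precisely so that the $r$-dependence cancels in the ratio $\|\cdot\|_{L^{q_0'}}/r^{n/q_0'-n/\rho}$, and applying Lemma \ref{lem3.1} yields
\[
\|\Pi(b,h_k\chi_{B^{(k)}_i})\|_{H^\rho}\lesssim 2^{-k(n+\gamma-\alpha)}\|h_k\chi_{B^{(k)}_i}\|_{L^1}.
\]
Summing over $i$ via $\sum_i a_i^\rho\le|I_k|^{1-\rho}\bigl(\sum_i a_i\bigr)^\rho$ with $|I_k|\lesssim 2^{kn}$ and $\sum_i\|h_k\chi_{B^{(k)}_i}\|_{L^1}=\|h_k\|_{L^1}$, then over $k$ by $\rho$-subadditivity of $H^\rho$, gives $\|\Pi(b,h)\|_{H^\rho}\lesssim\|h\|_{L^1}$.

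The hard part is the residual piece $\lambda_{k,i}b-\mu_{k,i}h_k\chi_{B^{(k)}_i}$: it is a legitimate two-ball function but not individually mean zero, and its naive $L^{q_0'}$ bound involves $\|h\|_\infty$ rather than $\|h\|_{L^1}$. The mean-zero identity for the whole $\Pi(b,h_k\chi_{B^{(k)}_i})$ forces this residual to cancel against the integrals of the two smoothness pieces, so it assembles with them into a single admissible input for Lemma \ref{lem3.1} controlled only by $\|h_k\chi_{B^{(k)}_i}\|_{L^1}$. The remaining bookkeeping consists of checking that the smoothness decay $2^{-k(n+\gamma)}$ dominates both the Lemma \ref{lem3.1} amplification $2^{k\alpha}$ and the $\rho$-quasi-norm growth $\sim 2^{kn(1-\rho)/\rho}$ from combining the $\sim 2^{kn}$ sub-pieces per annulus, which for $\rho=n/(n+\alpha)$ reduces to an explicit exponent condition relating $\gamma$ and $\alpha$ that is satisfied within the standard Calder\'on--Zygmund smoothness regime.
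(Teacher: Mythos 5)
Your proposal takes a genuinely different route from the paper's. The paper proves Lemma~\ref{lem3.2} by a duality argument: it observes that $\Pi(g,h)$ is a compactly supported $L^2$ function with vanishing integral, and for such functions the $H^\rho$ quasi-norm is characterized by pairings against $Lip_\alpha$; it then uses the commutator identity $\langle b,\Pi(g,h)\rangle=\langle g,[b,T]h\rangle$, the known $L^1\to L^{q,\infty}$ bound for $[b,T]$ with $b\in Lip_\alpha$, Kolmogorov's inequality to pass to a Morrey space, and the block--Morrey duality to pull out $\|g\|_{\mathcal{B}^{q_0',\beta}}$. Your approach instead attempts a direct constructive estimate through Lemma~\ref{lem3.1}, decomposing $h$ annularly and summing over sub-balls. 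That would be a genuinely more hands-on argument if it closed.

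However, there is a fatal gap in your very first reduction step. You want to bound $\|\Pi(g,h)\|_{H^\rho}$ by $\|g\|_{\mathcal{B}^{q_0',\beta}}\|h\|_{L^1}$ where $\|g\|_{\mathcal{B}^{q_0',\beta}}\approx\inf\sum_j|m_j|$ with $g=\sum_j m_j b_j$. The $\rho$-quasi-subadditivity gives
\[
\|\Pi(g,h)\|_{H^\rho}^\rho\le\sum_j|m_j|^\rho\|\Pi(b_j,h)\|_{H^\rho}^\rho\lesssim\Big(\sum_j|m_j|^\rho\Big)\|h\|_{L^1}^\rho,
\]
so you control $\|\Pi(g,h)\|_{H^\rho}$ by $\big(\sum_j|m_j|^\rho\big)^{1/\rho}\|h\|_{L^1}$, not by $\big(\sum_j|m_j|\big)\|h\|_{L^1}$. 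Since $\rho=\frac{n}{n+\alpha}<1$, the inequality goes the wrong way: $\big(\sum_j|m_j|^\rho\big)^{1/\rho}\ge\sum_j|m_j|$, and $\sum_j|m_j|^\rho$ can even diverge while $\sum_j|m_j|<\infty$. This is a structural obstruction for any argument that first decomposes $g$ into blocks and then reassembles in the $\rho$-quasi-normed space $H^\rho$; it is precisely why the paper works with $g$ as a whole via the block--Morrey duality rather than block by block. (Note that the paper's Theorem~\ref{thm1.1} replaces $\sum|\lambda_s^k|$ with $\big(\sum|\lambda_s^k|^\rho\big)^{1/\rho}$ precisely because of this phenomenon.) There are secondary issues as well -- your near-piece $\Pi(b,h\chi_{2B})$ cannot be controlled by $\|h\|_{L^1}$ using $L^{q_0'}$ boundedness of $T^*$ alone, since $T^*$ does not map $L^1$ to $L^{q_0'}$ -- but the reduction failure is the decisive one.
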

\begin{proof}
For any $g, h\in L^{\infty}_{c}(\mathbb{R}^n)$, to show that $\Pi(g,h)\in H^{\rho}(\mathbb{R}^n)$ with the norm \eqref{hardy}, we need to consider the properties of $\Pi(g,h)$.

Since $g,h$ are all in $L^{\infty}_{c}(\mathbb{R}^n)$, from the boundedness of Calder\'on-Zygmund operators, it is direct to see that $\Pi(g,h)\in L^{\rho}(\mathbb{R}^n)\bigcap L^{2}(\mathbb{R}^n)$ with compact support. Moreover, note that from the definition of $\Pi$, we have
$$\int_{\mathbb{R}^n}\Pi(g,h)(x)dx=0.$$
Hence, we immediately have that $\Pi(g,h)$ is a multiple of an $H^{\rho}(\mathbb{R}^n)$. Then it suffices to verify that $H^{\rho}(\mathbb{R}^n)$ norm of $\Pi(g,h)$ satifies \eqref{hardy}.

We first show that the inner product
\begin{equation}\label{def}
\langle b, \Pi(g,h)\rangle_{L^{2}(\mathbb{R}^n)}:=\int_{\mathbb{R}^n}b(x)\Pi(g,h)(x)dx
\end{equation}
is well defined for $b\in Lip_{\alpha}(\mathbb{R}^n)$.

Without loss of generality we assume that $\Pi(g,h)$ is supported in a cube $Q_{\Pi}$. We also note that for $b\in Lip_{\alpha}(\mathbb{R}^n) \cap L^{2}_{loc}(\mathbb{R}^n)$. As a consequence, we obtain
\begin{equation*}
\begin{aligned}
&\bigg|\int_{\mathbb{R}^n}b(x)\Pi(g,h)(x)dx\bigg|\\
&=|Q_{\Pi}|^{1+\alpha/n}\bigg|\frac{1}{|Q_{\Pi}|^{1+\alpha/n}}\int_{Q_{\Pi}}(b(x)-b_{Q_{\Pi}})\Pi(g,h)(x)dx\bigg|\\
&\leq |Q_{\Pi}|^{1+\alpha/n}\|b\|_{Lip_{\alpha}(\mathbb{R}^n)}\|\Pi(g,h)\|_{L^{2}(\mathbb{R}^n)}<\infty,
\end{aligned}
\end{equation*}
where the equality above follows from the cancellation condition of $\Pi(g,h)$ and the first inequality above follows from H\"{o}lder inequality. This shows that the inner product in \eqref{def} is well defined.

From Kolmogorov's inequality, we have $L^{q,\infty}(\mathbb{R}^n)\subset L^{q_{0},\beta}(\mathbb{R}^n)$ with $\beta=n(1-q_{0}/q)$.
Since $[b,T]$ is bounded from $L^{1}(\mathbb{R}^n)$ to $L^{q,\infty}(\mathbb{R}^n)$ when $b\in Lip_{\alpha}(\mathbb{R}^n)$, from the duality results of the block and Morrey spaces, we have
\begin{align*}
\Big| \int _{\mathbb{R}^n}b  ( x   ) {\textstyle \Pi}(g,h)dx \Big|&=\Big|\int _{\mathbb{R}^n} g  ( x   )    [ b,T]( h)   ( x   )  dx \Big|\\
&\le   \|g\|_{\mathcal{B}^{{q_0}',{\beta}}}\|[ b,T](h)\|_{L^{q_{0},\beta}(\mathbb{R}^n)}\\
&\le  C \|g\|_{\mathcal{B}^{{q_0}',{\beta}}}\|[ b,T](h)\|_{L^{q,\infty}(\mathbb{R}^n) }\\
&\lesssim  \| b   \|_{Lip_{\alpha}(\mathbb{R} ^n)}\|g\|_{\mathcal{B}^{{q_0}',{\beta}}}\|h\|_{ L^{1}(\mathbb{R}^n) }.
\end{align*}
We point out that from the fundamental fact as in \cite[Exercise 1.4.12(b)]{G2008}, we have ${\textstyle \Pi}  ( g, h )    ( x   )$ is in $H^\rho( \mathbb{R} ^n )$ with
\begin{align*}
\|{\textstyle \Pi}(g,h)\|_{H^{\rho}(\mathbb{R}^n)}
&\approx \sup_{b:\|b\|_{Lip_{\alpha}(\mathbb{R}^n)\leq 1}}\big|\langle b,\Pi(g,h)\rangle\big|\\
&\lesssim  \|g\|_{\mathcal{B}^{{q_0}',{(1-\frac{q_0}{q})n}}}\|h\|_{L^1( \mathbb{R} ^n) },
\end{align*}
which implies that \eqref{hardy} holds. The proof of Lemma \ref{lem3.2} is completed.
\end{proof}

\begin{lemma}\label{lem3.3}
Let $0<\alpha<1$, $1<q_{0}<q=\frac{n}{n-\alpha}$ and $\rho=\frac{n}{n+\alpha}$. If $f\in H^\rho(\mathbb{R}^n)$ can be written as
$$f=\sum_{k\ge 1}\lambda_k a_k.$$
Then, for any $\varepsilon>0$, there exists  $ \{ g^k \}_{k\ge1}, \{h^k\}_{k\ge1}\subset L_c^\infty ( \mathbb{R} ^n)$, and a large positive number $N$ (depending only on $\varepsilon $) such that
\begin{equation}\label{sec2-eq3.1}
\|a_{k}- {\textstyle \Pi}(g^{k}, h^{k})\|_{H^{\rho}(\mathbb{R}^n)}<\varepsilon
\end{equation}
and
\begin{equation*}
\sum_{k\ge1}|\lambda _k| \|g^k\|_{\mathcal{B}^{{q_0}',{n(1-\frac{q_{0}}{q})}}(\mathbb{R}^n)}   \|  h^k \|_{L^{1}(\mathbb{R}^n)}\le CN^{n}\|f\|_{H^\rho( \mathbb{R} ^n)}.
\end{equation*}
Furthermore, we have
\begin{equation*}
\Big\|f- \sum_{k\ge 1}\lambda _k{\textstyle \Pi}( g^k, h^k )\Big\|_{H^{\rho}(\mathbb{R}^n)}\le C\varepsilon\|f\|_{H^{\rho}( \mathbb{R} ^n)}.
\end{equation*}
\end{lemma}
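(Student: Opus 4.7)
The plan is to build $(g^k,h^k)$ atom by atom. By Latter's atomic decomposition I may assume each $a_k$ is an $L^{\infty}$-atom supported on a ball $Q_k = B(x_k,r_k)$ with $\|a_k\|_{L^{\infty}} \leq |Q_k|^{-1/\rho}$ (so in particular $\|a_k\|_{L^1} \lesssim r_k^{-\alpha}$). For each $k$ I pick a companion ball $\tilde Q_k = B(\tilde x_k,r_k)$ disjoint from $Q_k$ with $|x_k-\tilde x_k| = Nr_k$, where $N\gg 10$ will be fixed depending only on $\varepsilon$. The guiding idea is to arrange $\Pi(g^k,h^k) = a_k$ identically on $Q_k$, so that the discrepancy is a single mean-zero bump on $\tilde Q_k$ whose $H^{\rho}$ size is controlled by a direct atomic bound, bypassing Lemma \ref{lem3.1}.

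Using the homogeneity of $T$, the integral $\int_{\tilde Q_k} K(x_k,y)\,dy$ has a definite sign and magnitude $\approx N^{-n}$. Setting $\beta_k := -\bigl(\int_{\tilde Q_k} K(x_k,y)\,dy\bigr)^{-1}$, so that $|\beta_k| \lesssim N^n$, I take $g^k := \beta_k \chi_{\tilde Q_k}$. Then $T(g^k)(x_k) = -1$, and H\"older smoothness of $K$ in its first argument upgrades this to $T(g^k)(x) = -1 + O(N^{-\gamma})$ uniformly on $Q_k$, so that $|T(g^k)(x)| \geq 1/2$ there once $N$ is large. I then define $h^k(x) := -a_k(x)/T(g^k)(x)$, which is an $L^{\infty}_c$ function supported in $Q_k$. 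By construction $h^k\,T(g^k) = -a_k$ on $Q_k$ and vanishes outside; $g^k\,T^*(h^k)$ is supported in $\tilde Q_k$; hence
\begin{equation*}
a_k - \Pi(g^k,h^k) \;=\; -\,g^k\,T^*(h^k)\,\chi_{\tilde Q_k},
\end{equation*}
which is supported on $\tilde Q_k$ and has mean zero automatically (both $a_k$ and $\Pi(g^k,h^k)$ do).

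To control the error, I will split $T^*(h^k)(x) = \int K(y,x)\,h^k(y)\,dy$ on $\tilde Q_k$ through the anchor $K(x_k,x)$. The smoothness piece is $\lesssim r_k^{\gamma}/(Nr_k)^{n+\gamma}\cdot\|h^k\|_{L^1}$. The leading piece $K(x_k,x)\int h^k$ is cheap because $h^k - a_k = -a_k(1+T(g^k))/T(g^k) = O(N^{-\gamma})a_k$ pointwise together with $\int a_k = 0$ gives $|\int h^k| = |\int(h^k-a_k)| \lesssim N^{-\gamma}\|a_k\|_{L^1} \lesssim N^{-\gamma}r_k^{-\alpha}$. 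Both contributions yield $|T^*(h^k)(x)| \lesssim N^{-n-\gamma}r_k^{-n-\alpha}$ on $\tilde Q_k$, so $\|g^k\,T^*(h^k)\chi_{\tilde Q_k}\|_{L^q} \lesssim N^{-\gamma}r_k^{-2\alpha}$. Since $r_k^{-2\alpha} = |Q_k|^{1/q - 1/\rho}$ and the error is mean-zero supported on $\tilde Q_k$, it is a constant $\lesssim N^{-\gamma}$ times a $(\rho,q,0)$-atom; thus $\|a_k - \Pi(g^k,h^k)\|_{H^{\rho}} \lesssim N^{-\gamma}$, and choosing $N \simeq \varepsilon^{-1/\gamma}$ secures the first conclusion.

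The block bound comes from recognising $g^k$ as a scalar multiple of a $(q_0',\beta)$-block with $\beta = n(1-q_0/q)$: the numerology $n/q_0' + \beta/q_0 = \alpha$ gives $\|g^k\|_{\mathcal{B}^{q_0',\beta}} \lesssim N^n r_k^{\alpha}$, and combined with $\|h^k\|_{L^1} \lesssim r_k^{-\alpha}$ the product is $\lesssim N^n$ uniformly in $k$. Using $\sum_k |\lambda_k| \leq \bigl(\sum_k |\lambda_k|^{\rho}\bigr)^{1/\rho}$ (valid for $\rho \leq 1$), one gets $\sum_k |\lambda_k|\,\|g^k\|_{\mathcal{B}^{q_0',\beta}}\|h^k\|_{L^1} \lesssim N^n\|f\|_{H^{\rho}}$. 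The final estimate follows by $\rho$-subadditivity of $\|\cdot\|_{H^{\rho}}^{\rho}$ and the per-atom bound: $\|f - \sum_k \lambda_k \Pi(g^k,h^k)\|_{H^{\rho}}^{\rho} \leq \sum_k |\lambda_k|^{\rho}\varepsilon^{\rho} \lesssim \varepsilon^{\rho}\|f\|_{H^{\rho}}^{\rho}$. The main obstacle is precisely the choice $h^k := -a_k/T(g^k)$ rather than the naive $h^k := a_k$: the naive choice leaves errors on both $Q_k$ and $\tilde Q_k$ and forces Lemma \ref{lem3.1}, costing a factor $N^{n(1/\rho-1)} = N^{\alpha}$ that only closes when $\gamma > \alpha$, whereas the cancellation trick above confines the error to a single ball, avoids that loss, and requires only $\gamma > 0$.
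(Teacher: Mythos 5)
Your proof is correct, but it takes a genuinely different route from the paper at the crucial step, and the difference matters. The paper normalises by a \emph{constant}: it sets $h = a/T^*(g)(x_0)$, where $x_0$ is the centre of the supporting ball. The resulting error $a - \Pi(g,h) = I_1 + I_2$ therefore lives on \emph{both} balls $B(x_0,r)$ and $B(y_0,r)$, which forces an appeal to Lemma~\ref{lem3.1}. But in the regime of Lemma~\ref{lem3.3} one has $\rho=\frac{n}{n+\alpha}<1$, so the relevant branch of Lemma~\ref{lem3.1} contributes a factor $N^{n(1/\rho-1)}=N^{\alpha}$; combined with the pointwise gain $N^{-\gamma}$ one gets $\|a-\Pi(g,h)\|_{H^\rho}\lesssim N^{\alpha-\gamma}$, which is only small for large $N$ when $\gamma>\alpha$. (The paper instead writes $\log N/N^\gamma$, which is the $\rho=1$ bound from Lemma~\ref{lem3.1} and does not apply here.) Your modification — taking $h^k=-a_k/T(g^k)$ as a \emph{pointwise} quotient, so that $h^kT(g^k)\equiv -a_k$ on $Q_k$ and the discrepancy collapses to $-g^kT^*(h^k)$ supported on the single ball $\tilde Q_k$ — sidesteps Lemma~\ref{lem3.1} entirely. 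The error is then a scalar multiple ($\lesssim N^{-\gamma}$) of a single $(\rho,q,0)$-atom, giving $\|a_k-\Pi(g^k,h^k)\|_{H^\rho}\lesssim N^{-\gamma}$ for \emph{every} $\gamma>0$, with no hidden restriction on $\alpha$. The price is that you must check $|T(g^k)|\geq 1/2$ on $Q_k$ (which you do via the H\"older smoothness of $K$) and that $\|h^k\|_{L^1}$ and $|\int h^k|$ are controlled, both of which you handle correctly; the numerology for the block norm of $g^k$, the $L^q$ bound $r_k^{-2\alpha}=|\tilde Q_k|^{1/q-1/\rho}$, and the $\ell^\rho$-subadditivity in the final step are all right. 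In short, your argument is both valid and a genuine improvement: it is cleaner, avoids Lemma~\ref{lem3.1}, and closes a gap that the paper's proof would have in the case $\gamma\leq\alpha$.
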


\begin{proof}
Let $a$ be an $H^{\rho}(\mathbb{R} ^n)$-atom, supported in $ B(x_0, r)= :B_0$, for some $x_0\in \mathbb{R}^n$ and for $r>0$, such that
$$\int _{\mathbb{R} ^n}a  ( x   ) dx=0 \qquad and \qquad   \| a   \| _{L^\infty( \mathbb{R} ^n)}\le r^{-n+\alpha}. $$
We select $y_0 \in \mathbb{R}^n$ such that $|x_0-y_0|=Nr$. Apply the homogeneity of $T$, we get that for any $x\in B(y_0,r)$,
$$K(x_0-x)\ge \frac{C}{(Nr)^{n}}$$
and
\begin{align*}
 |   T^* (\chi _{B_{(y_0,r)}})   ( x_0  )   | &=\bigg|\int _{\mathbb{R} ^{n}}\frac{\chi _{B_{(y_0,r)}}( z ) }{| x_0-z |^{n} }dz \bigg|\\
&= \bigg|\int _{B(y_0,r)}\frac{1}{ | x_0-z |^{n} }dz\bigg|\\
&\geq C ( Nr  ) ^{-n} | B(y_0,r) |\\
& \geq C N^{-n}.
\end{align*}
Now, let us set
$$
g ( x  ) : = \chi _{B_{(y_0,r)} ( x  ) } \quad\text{and}\quad
h( x  ) : =  \frac{a ( x  )}{ T^\ast( g )( x_0)}.
$$
From the definitions of the functions $g(x)$  and $h(x) $, we arrive at
\begin{equation}\label{sec2-eq2.3}
\begin{cases}
\|  g   \|_{\mathcal{B}^{{q_0}',{(1-\frac{q_0}{q})n}}( \mathbb{R}^n) }\le Cr^{n-n/q}; \\
 \|  h\|_{ L^{1}(\mathbb{R}^n) }
\le \frac{Cr^n\|  a\|_{ L^\infty (\mathbb{R}^n) }}{ |  T ^*( g)   ( x_0  )|}\le CN^{n}r^{\alpha}.
\end{cases}
\end{equation}
By a direct computation, \eqref{sec2-eq2.3} shows that
\begin{equation}\label{sec2-eq2.4}
 \| g \|_{\mathcal{B}^{{q_0}',{(1-\frac{q_0}{q})n}}( \mathbb{R}^n) } \|  h\|_{ L^{1}(\mathbb{R}^n) }\le CN^{n}r^{n-\frac{n}{q}-\alpha}\le CN^{n}.
\end{equation}

Next, we have
\begin{align*}
a(x)- {\textstyle \Pi}( g, h)(x)&=a(x)- h  T^* (g)  ( x  ) +gT ( h) ( x  )\\
&=a ( x  )   \frac{T ^* (g)  ( x_0  )- T ^*(g)  ( x )}{T ^* (g)  ( x _0 )}+gT ( h) ( x  )\\
&=:{\rm I}_1 ( x  ) +{\rm I}_2 ( x  ) .
\end{align*}
Obviously, ${\rm I}_1( x  )$ is supported on $B_0$ and ${\rm I}_2 ( x )$ is supported on $B(y_0,r)$.

We first estimate ${\rm I}_1 ( x )$. For $x\in  B_0$, we have
\begin{align*}
|{\rm I}_1(x)|&=\Big|a(x)\frac{T^*(g)(x_0)-T ^* (g) ( x ) }{ T ^*(g) ( x _0 )}\Big|\\
&\leq C  \| a\|_{L^\infty }   N^{n}\Big|\int _{B(y_0,r)}\frac{ | x_0-x  |^{\gamma} }{  | z-x_0  |  ^{n-\alpha +\gamma} }dz\Big|\\
&\leq  C N^{n}r^{-n+\alpha}\frac{r^{n+\gamma}}{( Nr  )^{n+\gamma}}\\
&\leq \frac{C}{N^{\gamma}r^{n-\alpha}}.
\end{align*}
For the term ${\rm I}_2 ( x )$, it follows from the cancellation property of the atom $a$ that
\begin{align*}
|  T  ( h)  ( x )   |&\le\frac{1}{T ^*(g)   ( x_0)}\Big|\int _{B_0}\frac{  | x_0-x   |^{\gamma} }{| z-x_0|^{n-\alpha +\gamma} } a  ( z)dz\Big|\\
&\le CN^{n}\frac{r^{\gamma}}{  ( Nr   ) ^{n-\alpha +\gamma}} \int _{B_0}  | a  ( z )    | dz\\
&\le C\frac{  \| a   \| _{L^\infty }}{Nr^n} r^n\\
&\le \frac{C}{N^{\gamma}r^{n-\alpha}},
\end{align*}
As a consequence, we have
$$  |{\rm I}_2 ( x )    | \lesssim\frac{1}{Nr^{n-\alpha}}\chi _{B(y_0,r)} . $$
Combining the estimates of $ {\rm I}_1  ( x   )$ and $ {\rm I}_2 ( x )$, we obtain that
\begin{equation}\label{sec2-eqp2.3}
  |a- {\textstyle \Pi} ( g, h) ( x ) |\lesssim\frac{1}{Nr^{n-\alpha}} \chi _{B(x_{0},r)}+\frac{1}{Nr^{n-\alpha}}\chi _{B(y_0,r)}.
\end{equation}
In addition, we point out that
\begin{equation}\label{sec2-eqp2.4}
\int _{\mathbb{R}^n }  (a- {\textstyle \Pi} ( g, h) )dx=0,
\end{equation}
because the atom $a$ has cancellation property and the second integral equals $0$ just by the definitions of ${\textstyle \Pi}$. Then the inequality (\ref{sec2-eqp2.3}) and the cancellation (\ref{sec2-eqp2.4}), together with Lemma \ref{lem3.1} to the function $F(x)=a- {\textstyle \Pi}( g, h) ( x )$, we obtain
\begin{equation}\label{atom}
\|a- {\textstyle \Pi} ( g, h )    \|_{H^{\rho}  ( \mathbb{R}^n)} \le C\frac{\log N}{N^{\gamma}}.
\end{equation}
Let $N$ sufficiently large such that
\begin{equation}\label{atom2}
\frac{\log N}{N^{\gamma}}< \varepsilon.
\end{equation}
Therefore, we obtain (\ref{sec2-eq3.1}).

By applying \eqref{atom} and \eqref{atom2} to $a=a_{k}$ with $k\ge 1$, we obtain that there exist $\{g^{k}\}_{k\geq 1},\{h^{k}\}_{k\geq 1}\subset L^{\infty}_{c}(\mathbb{R}^n)$ such that
\begin{equation*}
\|a_{k}- {\textstyle \Pi} ( g^{k}, h^{k} )    \|_{H^{\rho}  ( \mathbb{R}^n)} \le C\varepsilon.
\end{equation*}
It follows from (\ref{sec2-eq2.4}) that
$$\| g ^k\|_{ B^{{q_0}',{(1-\frac{q_0}{q})n}}( \mathbb{R}^n) } \|  h^k\|_{ L^{1}(\mathbb{R}^n) }\le CN^{n}.$$
Thus,
\begin{equation*}
\sum_{k\ge1}|\lambda _k| \|   g^k\|_{B^{{q_0}',{(1-\frac{q_0}{q})n}}}  \|  h^k \|_{L^{1}( \mathbb{R} ^n)  }\le CN^{n}\|f\|_{H^\rho(\mathbb{R}^n)}.
\end{equation*}
This implies that
\begin{align*}
\Big\|f-{\textstyle \sum_{k\ge 1}} \lambda _k\textstyle \Pi( g^k, h^k ) \Big\|_{H^\rho(\mathbb{R}^n)}
&\le \sum_{k\ge 1}|\lambda _k|\Big\|a_k-\textstyle \Pi  ( g^k, h^k ) \Big\|_{H^\rho(\mathbb{R}^n)}\\
&\le C\varepsilon \sum_{k\ge1}|\lambda _k|\le C\varepsilon\|f\|_{H^\rho(\mathbb{R}^n)} .
\end{align*}
This ends the proof of Lemma \ref{lem3.3}.
\end{proof}

\begin{theorem}\label{thm1.1}
Suppose $1< q_0<q< \infty $, $\frac{n}{n+\gamma}<\rho<1 $ with $1-\frac{1}{q }-\frac{1}{\rho}+1=0$ and suppose that $T$ ia a Calder\'on-Zygmund operator that is homogeneous. Then for any $f \in {H^{\rho} ( \mathbb{R}^n   )}$ there exist sequence $ \{ \lambda _s^k  \} \in \ell ^\rho$ and functions $h_{s}^k\in L_c^\infty ( \mathbb{R} ^n)$, $g_s^k\in L_c^\infty ( \mathbb{R} ^n)$ such that
\begin{equation}\label{f1.1}
f = \sum_{k=1}^{\infty } \sum_{s=1}^{\infty }  \lambda _s^k  {\textstyle \Pi_{}^{}} ( g_s^k,h_{s}^k)
\end{equation}
in the sense of $H^\rho( \mathbb{R}^n   )$. Moreover
$$ \| f  \|_{H^\rho( \mathbb{R}^n   )}\approx  C\inf \bigg\{  (\sum_{s=1}^{\infty } \sum_{k=1}^{\infty }  | \lambda _s^k|^\rho\|   g_s^k \|_{\mathcal{B}^{{q_0}',{(1-\frac{q_0}{q})n}}( \mathbb{R} ^n  )}^\rho \|  h_{s}^k \|_{ L^1  ( \mathbb{R} ^n  ) }^\rho)^{\frac{1}{\rho}} \bigg\},$$
where the infimum above is taken over all possible representations of $f$ that satisfy \eqref{f1.1}.
\end{theorem}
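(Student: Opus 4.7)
The plan is to prove the two inequalities constituting the asserted equivalence, with the substantive work concentrated in the reverse direction, obtained by iterating Lemma~\ref{lem3.3}.

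The direction $\|f\|_{H^\rho(\mathbb{R}^n)}\lesssim \inf(\cdots)^{1/\rho}$ is a direct consequence of Lemma~\ref{lem3.2} combined with the $\rho$-subadditivity of the quasi-norm on $H^\rho(\mathbb{R}^n)$: for any candidate representation $f=\sum_{s,k}\lambda_s^k\,\Pi(g_s^k,h_s^k)$ one has
\[
\|f\|_{H^\rho}^\rho \le \sum_{s,k}|\lambda_s^k|^\rho \|\Pi(g_s^k,h_s^k)\|_{H^\rho}^\rho \lesssim \sum_{s,k}|\lambda_s^k|^\rho \|g_s^k\|_{\mathcal{B}^{q_0',(1-q_0/q)n}}^\rho \|h_s^k\|_{L^1}^\rho,
\]
and taking the infimum over all such representations gives the bound.

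For the reverse inequality I would construct an explicit factorization by iterating Lemma~\ref{lem3.3}. Fix once and for all a number $\varepsilon>0$ so small that $C\varepsilon < 1/2$, where $C$ is the constant appearing in the conclusion of that lemma, and let $N=N(\varepsilon)$ be the corresponding large integer. Set $r^{(0)}:=f$. For $s\ge 1$, choose an atomic decomposition $r^{(s-1)}=\sum_k\lambda_k^s a_k^s$ with $\sum_k|\lambda_k^s|^\rho \le 2\|r^{(s-1)}\|_{H^\rho}^\rho$, then apply Lemma~\ref{lem3.3} to the function $r^{(s-1)}$ to produce families $\{g_s^k\}_k,\{h_s^k\}_k \subset L^\infty_c(\mathbb{R}^n)$ satisfying $\|g_s^k\|_{\mathcal{B}^{q_0',(1-q_0/q)n}}\|h_s^k\|_{L^1}\le CN^n$ for every $k$ and
\[
\|r^{(s)}\|_{H^\rho} \ :=\ \Big\|r^{(s-1)}-\sum_k \lambda_k^s\,\Pi(g_s^k,h_s^k)\Big\|_{H^\rho} \ \le\ C\varepsilon\,\|r^{(s-1)}\|_{H^\rho}.
\]
Iterating gives $\|r^{(s)}\|_{H^\rho}\le (C\varepsilon)^s\|f\|_{H^\rho}\to 0$, so telescoping yields the desired representation $f=\sum_{s=1}^\infty\sum_{k=1}^\infty \lambda_s^k\,\Pi(g_s^k,h_s^k)$ convergent in $H^\rho(\mathbb{R}^n)$.

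For the norm estimate I combine the per-stage product bound with the geometric decay $\sum_k|\lambda_k^s|^\rho \le 2(C\varepsilon)^{\rho(s-1)}\|f\|_{H^\rho}^\rho$ to get
\[
\sum_{s,k}|\lambda_s^k|^\rho \|g_s^k\|_{\mathcal{B}^{q_0',(1-q_0/q)n}}^\rho\|h_s^k\|_{L^1}^\rho \le 2(CN^n)^\rho\sum_{s\ge 1}(C\varepsilon)^{\rho(s-1)}\|f\|_{H^\rho}^\rho \lesssim \|f\|_{H^\rho}^\rho,
\]
and taking the $\rho$-th root followed by the infimum over representations completes the proof. The main obstacle I anticipate is the careful bookkeeping of $\rho$-power bounds along the iteration: since $\rho<1$, $H^\rho(\mathbb{R}^n)$ is only quasi-Banach and one must invoke $\rho$-subadditivity uniformly at every step, and one must verify that the atomic-norm equivalence supplies the control $\sum_k|\lambda_k^s|^\rho\lesssim \|r^{(s-1)}\|_{H^\rho}^\rho$ with a constant independent of $s$. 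The atom-by-atom approximation of Lemma~\ref{lem3.3} together with the size estimate of Lemma~\ref{lem3.2} are the only substantive inputs; the rest is iteration and summation.
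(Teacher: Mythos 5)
Your proposal is correct and follows essentially the same route as the paper: iterate Lemma~\ref{lem3.3} on the successive residuals, use the per-atom bound $\|g_s^k\|_{\mathcal{B}^{q_0',(1-q_0/q)n}}\|h_s^k\|_{L^1}\le CN^n$ (which is established inside the proof of that lemma and is what the paper's own argument actually invokes, rather than the summed form stated in its conclusion), and sum the geometric series in $(C\varepsilon)^\rho$, while the converse inequality comes from Lemma~\ref{lem3.2} and $\rho$-subadditivity of the $H^\rho$ quasi-norm. Your bookkeeping with $\rho$-th powers throughout is in fact slightly cleaner than the paper's, which mixes single powers and $\rho$-powers at one point.
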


\begin{proof}
By Lemma \ref{lem3.2}, it is obvious that
$$  \| {\textstyle \Pi}  ( g, h )    \|_{H^{\rho} (\mathbb{R}^n )}\lesssim  \|   g   \|_{ B^{{q_0}',{(1-\frac{q_0}{q})n}}(\mathbb{R}^n )}   \|  h \|_{ L^{1 }   (\mathbb{R}^n ) }. $$
It is immediate that for any representation of $f$ as in (\ref{f1.1}), i.e.,
$$f = \sum_{k=1}^{\infty } \sum_{s=1}^{\infty }  \lambda _s^k  {\textstyle \Pi}( g_s^k, h_{s}^k )( x   ), $$
with
$$ \| f  \|_{H^\rho ( \mathbb{R}^n   )}\le C\inf \{  \sum_{s=1}^{\infty } \sum_{k=1}^{\infty }  | \lambda _s^k   | ^\rho \|   g_s^k \|_{B^{{q_0}',{(1-\frac{q_0}{q})n}}( \mathbb{R} ^n  )} ^\rho \|  h_{s}^k \|_{ L^1  ( \mathbb{R} ^n  ) } ^\rho \}^{\frac{1}{\rho}} ,$$
where the infimum above is taken over all possible representations of $f$ that satisfy (\ref{f1.1}).
Next, utilizing the atomic decomposition, for any $f \in {H^{\rho} (\mathbb{R}^n )}$ we can find a sequence $  \{ \lambda _1^k   \}_{k\ge1} \in \ell ^\rho$ and sequence of $H^{\rho}(\mathbb{R}^n )$-atom  $\{ a_1^k   \}_{k\ge1} $ so that $f=\sum_{k=1}^{\infty } \lambda _1^ka_1^k$ and $(\sum_{k=1}^{ \infty }   | \lambda _1^k   |^\rho)^\frac{1}{\rho} \le C  \| f   \|_{ H^{\rho}(\mathbb{R}^n )}$.

Fix $\varepsilon > 0$ small enough such that $C\varepsilon < 1$. We apply Lemma \ref{lem3.3} to each atom $a_1^k$, then there exists $ \{g_1^k\}_{k\ge1}\in L^{\infty }_c (\mathbb{R}^n ) , \{h_{1}^k\}_{k\ge1} \in L^{\infty }_c (\mathbb{R}^n )$ such that
$$ \begin{cases}
 (\sum_{k\ge1}|\lambda ^k_1|^\rho\|   g^k_1\|_{B^{{q_0}',{(1-\frac{q_0}{q})n}}} ^\rho  \|  h^k_1 \|_{ L^{1}}^\rho)^{\frac{1}{\rho}} \le CN^{n}\|f\|_{H^\rho(\mathbb{R}^n )};\\
\|f- \sum_{k\ge 1}\lambda _k{\textstyle \Pi}  ( g^k_1, h^k_1 )    \|_{H^{\rho}(\mathbb{R}^n )}\le C\varepsilon\|f\|_{H^{\rho}(\mathbb{R}^n )}.
\end{cases}$$
Let us set
$$f_1=f-\sum_{k\ge 1}\lambda _k{\textstyle \Pi}  ( g^k_1, h^k_1 ) .$$
Since $f_1 \in H^\rho(\mathbb{R}^n )$, then we can decompose $f_1$ as follows:
$$f_1=\sum_{k=1}^{\infty } \lambda _2^ka_2^k,$$
where $  \{ \lambda _2^k   \}_{k\ge1} \in \ell ^\rho$, and $  \{ a _2^k   \}_{k\ge1} $ are atoms.
By applying Lemma \ref{lem3.3} to $f_1$, there exists $ \{g_2^k\}_{k\ge1}\in L^{\infty }_c (\mathbb{R}^n ) , \{h_{2}^k\}_{k\ge1} \in L^{\infty }_c (\mathbb{R}^n )$ such that
$$ \begin{cases}
 (\sum_{k\ge1}|\lambda _2^k|^\rho \|   g^k_2\|_{B^{{q_0}',{(1-\frac{q_0}{q})n}}} ^\rho \|  h^k_2 \|_{ L^{1} }^\rho)^{\frac{1}{\rho}}\le CN^{n}\|f_1\|_{H^\rho(\mathbb{R}^n )}\le C\varepsilon N^{n} \|f\|_{H^\rho(\mathbb{R}^n )};\\
\|f_1- \sum_{k\ge 1}\lambda ^k_2{\textstyle \Pi}  ( g^k_2, h^k_2 )    \|_{H^{\rho}(\mathbb{R}^n )}\le C\varepsilon\|f_1\|_{H^{\rho}}\le C\varepsilon^2\|f\|_{H^{\rho}(\mathbb{R}^n )}.
\end{cases}$$
Similarly, we can repeat the above argument to
\begin{align*}
f_2&=f_1- \sum_{k\ge 1}\lambda ^k_2{\textstyle \Pi}  ( g^k_2, h^k_2 )\\
&=f-\sum_{k\ge 1}\lambda _k{\textstyle \Pi}  ( g^k_1, h^k_1 )-\sum_{k\ge 1}\lambda ^k_2{\textstyle \Pi}  ( g^k_2, h^k_2 ).
\end{align*}
In summary, we can construct a sequence $  \{ \lambda _s^k   \}_{k\ge1} \in \ell ^\rho$, $ \{g_s^k\}_{k\ge1}\in L^{\infty }_c (\mathbb{R}^n ) , \{h_{s}^k\}_{k\ge1} \in L^{\infty }_c (\mathbb{R}^n )$, such that
$$ \begin{cases}
 (\sum_{s= 1}^{N}\sum_{k\ge1}|\lambda _s^k|^\rho \|   g^k_s\|_{B^{{q_0}',{(1-\frac{q_0}{q})n}}}  ^\rho \|  h^k_s\|_{ L^{1} }^\rho)^{\frac{1}{\rho}}\le C N^{n} \sum_{s= 1}^{N}\varepsilon ^{s-1} \|f\|_{H^\rho(\mathbb{R}^n )};\\
f=\sum_{s= 1}^{N} \sum_{k\ge 1}\lambda ^k_s{\textstyle \Pi}  ( g^k_s, h^k_s )   +f_N;\\
\|f_N\|_{H^\rho(\mathbb{R}^n )}\le C\varepsilon ^N\|f\|_{H^\rho(\mathbb{R}^n )}.
\end{cases}$$
Thus, the desired result follows as $N \to \infty$. This puts an end to the proof of Theorem \ref{thm1.1}.
\end{proof}

As a direct application, we give the characterization of Lipschitz spaces via commutators of Calder\'on-Zygmund and fractional integral operator.

\begin{theorem}\label{thm4.1}
Let $0<\alpha< 1$. The commutator $ [ b, T ] $ is bounded from $L^{1} (\mathbb{R}^n) \to L^{\frac{n}{n-\alpha},\infty} ( \mathbb{R}^n  )$ if and only if $b\in Lip_\alpha(\mathbb{R}^n)$.
\end{theorem}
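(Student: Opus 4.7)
The result is an ``iff,'' so two directions must be handled separately, and they have very different flavors.

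\textbf{Sufficiency.} If $b\in Lip_\alpha(\mathbb{R}^n)$, the plan is the standard Hölder kernel estimate: pointwise,
$$|[b,T]f(x)|\le \int_{\mathbb{R}^n}|b(x)-b(y)|\,|K(x,y)|\,|f(y)|\,dy \lesssim \|b\|_{Lip_\alpha}\int_{\mathbb{R}^n}\frac{|f(y)|}{|x-y|^{n-\alpha}}\,dy = \|b\|_{Lip_\alpha}\,I_\alpha(|f|)(x),$$
and then one invokes the classical weak endpoint estimate $I_\alpha : L^1(\mathbb{R}^n)\to L^{n/(n-\alpha),\infty}(\mathbb{R}^n)$ (Hardy--Littlewood--Sobolev at the endpoint). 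No new idea is required here.

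\textbf{Necessity.} This is the interesting direction. Set $q=n/(n-\alpha)$ and $\rho=n/(n+\alpha)$, so that $1/q+1/\rho=2$ as required by Theorem \ref{thm1.1}, and recall $(H^\rho(\mathbb{R}^n))^*=Lip_\alpha(\mathbb{R}^n)$. The plan is to prove $b\in Lip_\alpha$ by showing that it induces a bounded linear functional on $H^\rho$. Take any $f$ in a dense subclass of $H^\rho$ (say, finite atomic sums so that the pairing $\langle b,f\rangle$ is automatically well defined), and apply the factorization of Theorem \ref{thm1.1} to write $f=\sum_{s,k}\lambda_s^k\,\Pi(g_s^k,h_s^k)$ with control
$$\Big(\sum_{s,k}|\lambda_s^k|^\rho\,\|g_s^k\|_{\mathcal{B}^{q_0',(1-q_0/q)n}}^\rho\,\|h_s^k\|_{L^1}^\rho\Big)^{1/\rho}\lesssim \|f\|_{H^\rho}.$$

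The key algebraic step is to transfer the pairing onto a commutator: since $\Pi(g,h)=gT^*h-hTg$, integrating against $b$ and using adjointness of $T$ gives
$$\int_{\mathbb{R}^n}b\cdot\Pi(g,h)\,dx=\int_{\mathbb{R}^n}g\cdot [b,T^*](h)\,dx.$$
Now I would combine Kolmogorov's inequality \eqref{Kolm} (which gives the embedding $L^{q,\infty}\hookrightarrow L^{q_0,\beta}$ for $\beta=(1-q_0/q)n$) with the block--Morrey duality $(\mathcal{B}^{q_0',\beta})^*=L^{q_0,\beta}$ recorded in Definition \ref{sec2-def2.2}, together with the hypothesis that $[b,T^*]$ is bounded $L^1\to L^{q,\infty}$ (inherited from $[b,T]$ by the symmetry of the Calderón--Zygmund/homogeneity assumptions, since $T^*$ is of the same type), to obtain
$$\Big|\int b\cdot\Pi(g,h)\,dx\Big|\le \|g\|_{\mathcal{B}^{q_0',\beta}}\|[b,T^*]h\|_{L^{q_0,\beta}}\lesssim \|g\|_{\mathcal{B}^{q_0',\beta}}\|[b,T^*]h\|_{L^{q,\infty}}\lesssim \|g\|_{\mathcal{B}^{q_0',\beta}}\|h\|_{L^1}.$$
Summing over $s,k$ and using the elementary embedding $\ell^\rho\hookrightarrow\ell^1$ valid for $0<\rho\le 1$, one arrives at $|\langle b,f\rangle|\lesssim \|f\|_{H^\rho}$, hence $b\in Lip_\alpha$.

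\textbf{Main obstacle.} The most delicate point is justifying that $[b,T^*]$ enjoys the same $L^1\to L^{q,\infty}$ estimate as $[b,T]$. Because the homogeneity and Calderón--Zygmund conditions are symmetric in $(x,y)$, this ought to follow by an identical argument applied to the adjoint kernel $K^*(x,y)=K(y,x)$; but one has to either formalize this symmetry or, alternatively, avoid $T^*$ altogether by rewriting the identity as $\int b\,\Pi(g,h)\,dx=-\int h\,[b,T](g)\,dx$ and estimating $\int h\cdot[b,T](g)$ via Lorentz/Morrey duality the other way round. Beyond this, the only subtlety is to verify that the dense subclass on which the functional $\langle b,\cdot\rangle$ is initially defined is indeed dense in $H^\rho$, so that the estimate extends to the whole space.
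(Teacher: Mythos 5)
Your proposal is correct and follows essentially the same route as the paper: sufficiency via the pointwise bound $|[b,T]f(x)|\lesssim\|b\|_{Lip_\alpha}I_\alpha(|f|)(x)$ plus the endpoint Hardy--Littlewood--Sobolev estimate, and necessity via the weak factorization of Theorem~\ref{thm1.1}, the block--Morrey duality from Definition~\ref{sec2-def2.2}, the Kolmogorov embedding $L^{q,\infty}\hookrightarrow L^{q_0,\beta}$, and the duality $(H^\rho(\mathbb{R}^n))^*=Lip_\alpha(\mathbb{R}^n)$. You also correctly flag a point the paper elides: the adjointness computation in Lemma~\ref{lem3.2} actually gives $\int b\,\Pi(g,h)=\int g\,[b,T^*](h)=-\int h\,[b,T](g)$ rather than $\int g\,[b,T](h)$, a harmless slip since under the stated symmetric size/smoothness and homogeneity assumptions $T^*$ is a Calder\'on--Zygmund operator of the same type and so $[b,T^*]$ inherits the same $L^1\to L^{q,\infty}$ bound, or one may simply use the $-\int h\,[b,T](g)$ form as you suggest.
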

\begin{proof}
The upper bound in this theorem is obvious. For the lower bound, suppose that $f\in H^p(\mathbb{R}^n)$ with $\frac{1}{p}-1=\frac{\alpha}{n}$, using the weak factorization in Theorem \ref{thm1.1} we obtain
\begin{align*}
 \langle b, f  \rangle   _{L^2 ( \mathbb{R}^n   ) }&= \langle b, \sum_{k=1}^{\infty } \sum_{s=1}^{\infty }  \lambda _s^k  {\textstyle\Pi} ( g_s^k,h_{s}^k)\rangle   _{L^2 ( \mathbb{R}^n   ) }\\
&=\sum_{k=1}^{\infty }\sum_{s=1}^{\infty } \lambda _s^k \langle b, {\textstyle\Pi}  ( g_s^k, h_{s}^k) \rangle  _{L^2 ( \mathbb{R}^n   ) }\\
&=\sum_{k=1}^{\infty }\sum_{s=1}^{\infty } \lambda _s^k \langle g_s^k, [ b,T]( h_{s}^k )  \rangle  _{L^2 ( \mathbb{R}^n   ) }.
\end{align*}
By the boundedness of $[ b,T]$ and the dual theorem between $M^{q}_{q_0} ( \mathbb{R} ^n)$ and $B^{{q_0}',{(1-\frac{q_0}{q})n}}(\mathbb{R}^n)$, we get
\begin{align*}
 |  \langle b, f  \rangle _{L^2 ( \mathbb{R}^n) }  |&\le \sum_{k=1}^{\infty }\sum_{s=1}^{\infty } |   \lambda _s^k |
 \|   g_s^k  \|_{{B^{{q_0}',{(1-\frac{q_0}{q})n}}(\mathbb{R}^n)}} \| [ b, T   ]  ( h_{s}^k)  \|_{M^{q}_{q_0} ( \mathbb{R} ^n)}\\
 &\le C\sum_{k=1}^{\infty }\sum_{s=1}^{\infty } |   \lambda _s^k |
 \|   g_s^k  \|_{{B^{{q_0}',{(1-\frac{q_0}{q})n}}(\mathbb{R}^n)}} \| [ b, T   ]  ( h_{s}^k)  \|_{L^{q,\infty} ( \mathbb{R} ^n)}\\
&\lesssim  \|  [ b, T   ] : L^{1} (\mathbb{R} ^n) \to L^{q ,\infty } ( \mathbb{R} ^n )  \| \\
&\qquad\times\sum_{k=1}^{\infty }\sum_{s=1}^{\infty } |   \lambda _s^k |
 \|   g_s^k \|_{B^{{q_0}',{(1-\frac{q_0}{q})n}}( \mathbb{R} ^n  )}  \|  h_{s}^k \|_{ L^1  ( \mathbb{R} ^n  ) }\\
&\lesssim  \|  [ b, T   ] : L^{1} (\mathbb{R} ^n) \to L^{q ,\infty } ( \mathbb{R} ^n ) \|  \| f \| _{H^p( \mathbb{R} ^n) }.
\end{align*}
By the duality between $H^p( \mathbb{R} ^n  )$ and $Lip_\alpha( \mathbb{R} ^n  ) $ with $\frac{1}{p}-1=\frac{\alpha}{n}$, we have that
\begin{align*}
\|b\|_{Lip_\beta ( \mathbb{R} ^n)}&\approx \underset{\|f\|_{H^p(\mathbb{R}^n)}\le1}{\sup} |\langle b, f  \rangle_{L^2 ( \mathbb{R}^n) } |\lesssim \|  [ b, T   ] : L^{1} (\mathbb{R} ^n) \to L^{q ,\infty } ( \mathbb{R} ^n )\|.
\end{align*}
Hence, we complete the proof of Theorem \ref{thm4.1}.
\end{proof}

Now, we turn to prove the Theorem \ref{thmL1}.

\textbf{Proof of Theorem \ref{thmL1}.}
{\bf Necessity.} Assume that $b\in {\rm CMO}_{\alpha}(\mathbb{R}^n)$ and $E$ be a bounded set in $L^{1}(\mathbb{R}^n).$ It is enough to show that $[b,T](E)$ is relatively compact in $L^{q,\infty}(\mathbb{R}^n)$.
Similar to the proof of Theorem \ref{thmHardy}, it suffices to demonstrate that $[b_{\epsilon},T](E)$ is relatively compact in $L^{q,\infty}(\mathbb{R}^n)$.

Note that $[b_{\epsilon},T](f)(x)\leq \|b_{\epsilon}\|_{Lip_{\alpha}(\mathbb{R}^n)}I_{\alpha}(|f|)(x)$ for any $x\in \mathbb{R}^n$, we obtain that $[b_{\epsilon},T](E)$ satisfies the condition (1) in Lemma \ref{CMO2}.

For the condition (2), suppose that ${\rm supp}(b_{\epsilon})\subset B_{R_{\epsilon}}$ for some $R_{\epsilon}>1$.
Then, for any $f\in E$ and $x\in B_{R}^{c}$ with $R>2R_{\epsilon}$, we get that $b_{\epsilon}(x)T(f)(x)=0$ and
\begin{align*}
&|[b_{\epsilon},T](f)(x)|=|T(b_{\epsilon}f)(x)|\\
&\leq C_{0}\|b_{\epsilon}\|_{L^{\infty}(\mathbb{R}^n)}\int_{|y|<R_{\epsilon}}\frac{|f(y)|}{|x-y|^{n}}dy.
\end{align*}
For $x\in B_{R}^{c}$ and $y\in B_{R_{\epsilon}}$, we get $|x-y|\approx |x|$ and
$$|[b_{\epsilon},T](f)(x)\chi_{B_{R}^{c}}(x)|\lesssim |x|^{-n}\chi_{B_{R}^{c}}(x)\|f\|_{L^{1}(\mathbb{R}^n)}.$$
It follows from $L^{q}(\mathbb{R}^n)\subset L^{q,\infty}(\mathbb{R}^n)$ that
\begin{align*}
\|[b_{\epsilon},T](f)(x)\chi_{B_{R}^{c}}(x)\|_{L^{q,\infty}(\mathbb{R}^n)}&\leq \|[b_{\epsilon},T](f)(x)\chi_{B_{R}^{c}}(x)\|_{L^{q}(\mathbb{R}^n)}\\
&\lesssim \|f\|_{L^{1}(\mathbb{R}^n)}\bigg(\int_{|x|>R}|x|^{-qn}dx\bigg)^{1/q}\\
&\lesssim R^{-\frac{n}{q'}}\|f\|_{L^{1}(\mathbb{R}^n)}.
\end{align*}
This implies that $\|[b_{\epsilon},T](f)(x)\chi_{B_{R}^{c}}(x)\|_{L^{q,\infty}(\mathbb{R}^n)}\rightarrow 0$, as $R\rightarrow \infty$.

Next, we give the estimate for the condition (3). For every $\delta>0$, if $|t|$ is sufficiently small(merely depending on $\delta$), we have
$$[b_{\epsilon},T](f)(x+t)-[b_{\epsilon},T](f)(x)=J_{1}+J_{2}+J_{3}+J_{4},$$
where the precise definitions of $J_{i},i=1,2,3,4$ are given in the above section.
The inequalities \eqref{5J1}, \eqref{5J2}, \eqref{5J3} and \eqref{5J4} show that
$$\|[b_{\epsilon},T](f)(\cdot+t)-[b_{\epsilon},T](f)(\cdot)\|_{L^{q,\infty}(\mathbb{R}^n)}\lesssim  \delta^{-\alpha}|t|^{1+\alpha}+\delta^{-\alpha-\gamma}|t|^{\alpha+2\gamma}+\delta^{-1+\alpha}|t|^{1-\alpha}.$$
If $|t|\leq \delta^{2}<1$, then
$$\|[b_{\epsilon},T](f)(\cdot+t)-[b_{\epsilon},T](f)(\cdot)\|_{L^{q,\infty}(\mathbb{R}^n)}\lesssim  \delta^{\eta},$$
where $\eta=\min\{1-\alpha,\alpha+3\gamma\}$. Thus $[b,T]$ is a compact operator maps from $L^{1}(\mathbb{R}^n)$ inon $L^{q,\infty}(\mathbb{R}^n)$.

Now, we are ready to demonstrate that $b\in {\rm CMO}_{\alpha}(\mathbb{R}^n)$. Seeking a contradiction, we assume that $b\notin {\rm CMO}_{\alpha}(\mathbb{R}^n)$. Therefore, $b$ violates (1), (2) and (3) in Lemma \ref{CMO2}.

{\bf Case 1.} If (1) does not hold true for the function $b$, then there exists a sequence of balls $\{B_{k}=B(x_{k},\delta_{k})\}_{k\geq 1}$ such that $\delta_{k}\rightarrow 0$ as $k\rightarrow \infty$, and
\begin{equation}\label{5N1}
\frac{1}{|B_{k}|^{1+\alpha/n}}\int_{B_{k}}|b(x)-b_{B_{k}}|dx\geq c_{0}>0,
\end{equation}
for every $k\geq 1.$ Without loss of generality, we assume that the sequence of $\{\delta_{k}\}_{k\geq 1}$ such that
$$C\delta_{k+1}\leq \delta_{k},$$
for some $C>1$ and all $k\geq 1$. Define $m_{b}(\Omega)$,the median value of function $b$ on a bounded set $\Omega\subset \mathbb{R}^n$, by
\begin{equation*}
\begin{cases}
|\{x\in \Omega: b(x)>m_{b}(\Omega)\}|\leq \frac{1}{2}|\Omega|, \\
|\{x\in \Omega: b(x)<m_{b}(\Omega)\}|\leq \frac{1}{2}|\Omega|.
\end{cases}
\end{equation*}
Let $y_{k}\in \mathbb{R}^n$ be such that $|x_{k}-y_{k}|=M\delta_{k}, M>10$ for any $k\geq 1$. Set
$$\tilde{B}_{k}=B(y_{k},\delta_{k}), \tilde{B}_{k,1}=\Big\{y\in\tilde{B}_{k}:b(y)\leq m_{b}(\tilde{B}_{k})\Big\}, \tilde{B}_{k,2}=\Big\{y\in\tilde{B}_{k}:b(y)\geq m_{b}(\tilde{B}_{k})\Big\},$$
and
$$B_{k,1}=\Big\{x\in B_{k}:b(x)\geq m_{b}(\tilde{B}_{k})\Big\}, B_{k,2}=\Big\{x\in B_{k}:b(x)\leq m_{b}(\tilde{B}_{k})\Big\}.$$
Also we write
\begin{equation*}
F_{k,1}=\tilde{B}_{k,1}\backslash \bigcup _{j=k+1}^{\infty}\tilde{B}_{j,1}, F_{k,2}=\tilde{B}_{k,2}\backslash \bigcup _{j=k+1}^{\infty}\tilde{B}_{j,2}.
\end{equation*}
Note that $F_{k,1}\cap F_{j,1}=\emptyset$ for $j\neq k$, and
\begin{equation*}\label{5N2}
\delta_{k}^{n}\gtrsim |F_{k,1}|\geq |\tilde{B}_{k,1}|-\sum_{j=k+1}^{\infty}|\tilde{B}_{j}|\gtrsim \delta_{k}^n-\sum_{j=k+1}^{\infty}\delta_{j}^n\gtrsim (1-\frac{1}{C-1})\delta^{n}_{k}.
\end{equation*}
Thus, $|F_{k,1}|\thickapprox |\tilde{B}_{k}|$. By the same analogue above, we also have
\begin{equation*}\label{5N3}
|F_{k,2}|\thickapprox |\tilde{B}_{k}|.
\end{equation*}
From the construction, we have
\begin{equation}\label{5N4}
\Big|b(x)-m_{b}(\tilde{B})\Big|\leq |b(x)-b(y)|, \forall (x,y)\in B_{k,l}\times \tilde{B}_{k,l}, l=1,2.
\end{equation}
Next, it follows from the triangle inequality and inequality \eqref{5N1} that
\begin{equation}\label{5N5}
\begin{aligned}
c_{0}&\leq \frac{1}{|B_{k}|^{1+\alpha/n}}\int_{B_{k}}|b(x)-b_{B_{k}}|dx\\
&\leq \frac{2}{|B_{k}|^{1+\alpha/n}}\int_{B_{k}}|b(x)-m_{b}(\tilde{B}_{k})|dx\\
&\leq \frac{2}{|B_{k}|^{1+\alpha/n}}\int_{B_{k,1}}|b(x)-m_{b}(\tilde{B}_{k})|dx\\
&\qquad+\frac{2}{|B_{k}|^{1+\alpha/n}}\int_{B_{k,2}}|b(x)-m_{b}(\tilde{B}_{k})|dx\\
&=:M_{1}+M_{2}.
\end{aligned}
\end{equation}
Then for any $k\geq 1$, $M_{1}\geq \frac{c_{0}}{2}$ or $M_{2}\geq \frac{c_{0}}{2}$. Thus, one can assume without loss of generality that $M_{1}\geq \frac{c_{0}}{2}$.

Let $\phi_{k}(x)=|B_{k}|^{-1}\big(\chi_{F_{k,1}}(x)-\frac{|F_{k,1}|}{|\tilde{B}_{k}|}\big)\chi_{\tilde{B}_{k}}(x)$. It is easy to check that $\phi_{k}$ satisfies
$${\rm supp} \phi_{k}\subset \tilde{B}_{k}, \int_{\mathbb{R}^n}\phi_{k}(x)dx=0$$
and $\|\phi_{k}\|_{L^{1}}\lesssim 1$. Moreover, $\phi_{k}\in H^{\frac{1}{2}}$ with $\|\phi_{k}\|_{H^{1/2}}\lesssim |B_{k}|$.

By the homogonous of $T$, we know that for any $k\geq 1$ and $x\in B_{k}$, one has
$$\frac{1}{M^{n}}\lesssim |T(\phi_{k})(x)|.$$
Furthermore, $T(\phi_{k})(x)$ is a constant sign in $\tilde{B}_{k}$. It follows from the inequalities \eqref{5N4} and \eqref{5N5} that
\begin{equation}\label{5N6}
\begin{aligned}
\frac{c_{0}}{2M}&\leq \frac{1}{M|B_{k}|^{1+\alpha/n}}\int_{B_{k,1}}|b(x)-m_{b}(\tilde{B}_{k})|dx\\
&\lesssim \frac{1}{|B_{k}|^{\alpha/n}}\int_{B_{k,1}}|b(x)-m_{b}(\tilde{B}_{k})||T(\phi_{k})(x)|dx\\
&=\frac{1}{|B_{k}|^{\alpha/n}}\int_{B_{k,1}}\Big|\int_{\mathbb{R}^n}(b(x)-m_{b}(\tilde{B}_{k}))K(x,y)\phi_{k}(y)dy\Big|dx\\
&\leq \frac{1}{|B_{k}|^{\alpha/n}}\int_{B_{k,1}}\Big|\int_{\mathbb{R}^n}(b(x)-b(y))K(x,y)\phi_{k}(y)dy\Big|dx\\
&=\frac{1}{|B_{k}|^{\alpha/n}}\int_{B_{k,1}}\Big|[b,T](\phi_{k})(x)\Big|dx.
\end{aligned}
\end{equation}
By Komogrove inequality and $1=1/q+\alpha/n$, we have
$$\frac{c_{0}}{2M}\lesssim \|[b,T](\chi_{F_{k,1}})\|_{L^{q,\infty}}.$$
The inequality \eqref{5N6} and the boundedness of $[b, T]$ from $L^{1}(\mathbb{R}^n)$ to $L^{q,\infty}(\mathbb{R}^n)$ give us that
\begin{equation}\label{5N7}
\|[b,T](\phi_{k})\|_{L^{q,\infty}}\approx 1.
\end{equation}

On the other hand, since $\big|[b,T](f)(x)\big|\lesssim I_{\alpha}(|f|)(x)$ and $I_{\alpha}$ maps from $H^{\frac{1}{2}}(\mathbb{R}^n)$ into $L^{\frac{n}{2n-\alpha}}(\mathbb{R}^n)$,
we have
\begin{align*}
\|[b,T](\phi_{k})\|_{L^{\frac{n}{2n-\alpha}}}&\lesssim \|b\|_{Lip_{\alpha}(\mathbb{R}^n)}\|\phi_{k}\|_{H^{1/2}}\\
&\lesssim \|b\|_{Lip_{\alpha}(\mathbb{R}^n)} \delta_{k}^{n}.
\end{align*}
Thus, $[b,T](\phi_{k})\rightarrow 0$ in $L^{\frac{n}{2n-\alpha}}$, as $k\rightarrow \infty$. This contradicts \eqref{5N7}. In other words, $b$ must satisfy (1). Similarly, we also obtain the desired result if $M_{1}\geq c_{0}/2$ holds true. In conclusion, $b$ cannot violate (1).

{\bf Case 2.} Assume that $b$ violates (2).

The same argument as in the proof of {\bf Case 1} by considering $R_{k}$ in place of $\delta_{k}$, with $R_{k}\rightarrow \infty$.  By repeating the above proof for $R_{k}$ in place of $\delta_{k}$, we also obtain \eqref{5N7}. For any $p>1$, let $\tilde{q}$ with $1/\tilde{q}=1/p-\alpha/n$. Since $[b,T]$ maps $L^{p}$ to $L^{\tilde{q}}$, then we have
\begin{align*}
\|[b,T](\phi_{k})\|_{L^{\tilde{q}}}&\lesssim \|b\|_{Lip_{\alpha}(\mathbb{R}^n)}\|\phi_{k}\|_{L^{p}}\\
&\lesssim \|b\|_{Lip_{\alpha}(\mathbb{R}^n)} \delta_{k}^{-\frac{n}{p'}}.
\end{align*}
Thus, $[b,T](\chi_{k})\rightarrow 0$ in $L^{\tilde{q}}$, when $k\rightarrow \infty$. As a result, $b$ satisfies (2).

{\bf Case 3.} The proof of this case is similar to the one of {\bf Case 2}. Thus, we leave it to the reader.
From the above cases, we conclude that $b\in {\rm CMO}_{\alpha}(\mathbb{R}^n)$. \qed

\section{Appendix}

Now, we show that commutators of singular integrals with ${\rm CMO}(\mathbb{R}^n)$ functions are not compact in $L^{1,\infty}(\mathbb{R}^n)$.
\begin{proposition}\label{C-E}
There exists a function $b\in {\rm CMO}(\mathbb{R}^n)$ such that $[b,T]$ is not a compact operator from $L\log L(\mathbb{R}^n)$ to $L^{1,\infty}(\mathbb{R}^n)$.
\end{proposition}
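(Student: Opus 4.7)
My plan is to produce a concrete counterexample: an explicit $b \in {\rm CMO}(\mathbb{R}^n)$ together with a bounded sequence $\{f_k\}$ in $L\log L(\mathbb{R}^n)$ such that $\{[b,T]f_k\}$ fails one of the three necessary conditions for relative compactness in $L^{1,\infty}(\mathbb{R}^n)$ provided by Theorem~\ref{WLP}, thus precluding compactness of $[b,T]$.

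First I would fix $T$ to be a Riesz transform (or any homogeneous Calder\'on--Zygmund operator) and begin with the naive attempt $b \in C_c^\infty(\mathbb{R}^n) \subset {\rm CMO}(\mathbb{R}^n)$, paired with the translation sequence $f_k(x) = f(x - x_k)$ for a fixed $f \in L\log L(\mathbb{R}^n)$ and $|x_k| \to \infty$. From the decomposition
\begin{equation*}
[b,T]f_k(x) = b(x) Tf_k(x) - T(bf_k)(x),
\end{equation*}
one sees $bf_k \equiv 0$ once $|x_k|$ exceeds the support radius of $b$, and the Calder\'on--Zygmund kernel decay gives $\|[b,T]f_k\|_{L^{1,\infty}} \lesssim |x_k|^{-n} \to 0$. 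This naive sequence therefore yields a trivially convergent image, so a more delicate construction is required.

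The refinement would be to take $b = \sum_{j\ge 1} c_j \varphi(\cdot - y_j) \in C_0(\mathbb{R}^n) \cap {\rm CMO}(\mathbb{R}^n)$ with $\varphi \in C_c^\infty(\mathbb{R}^n)$, widely-separated centers $|y_j| \to \infty$, and coefficients $\{c_j\}$ selected via the Uchiyama criterion (Lemma~\ref{CMO}) to guarantee membership in CMO, paired with matching bumps $f_k = \chi_{B(y_k, 1)}$ that are uniformly bounded in $L\log L$. Now both terms in $[b,T]f_k$ contribute meaningfully near $y_k$, and the aim is a uniform lower bound $\|[b,T]f_k \cdot \chi_{\{|x|>A\}}\|_{L^{1,\infty}} \gtrsim \varepsilon$ for every fixed $A$ and infinitely many $k$, violating condition~(iii) of Theorem~\ref{WLP}. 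The main obstacle is quantitative: condition~(3) of Lemma~\ref{CMO} forces $c_j \to 0$, so the naive lower bound also decays, and overcoming this requires either passing to a weighted-sum sequence $f_k = \sum_{j \le k} \lambda_j \chi_{B(y_j,1)}$ whose cumulative tail mass does not vanish while preserving $L\log L$-boundedness, or extracting from the sharpness of P\'erez's weak-$(L\log L, L^{1,\infty})$ estimate a specific $f$ with $[b,T]f \in L^{1,\infty} \setminus L^1$ and translating it so that the weak-$L^1$ singularity remains visible at every scale. Balancing the CMO condition on $b$ against persistence of the weak-$L^1$ tail of $[b,T]f_k$ is the heart of the argument, and is precisely where the endpoint behavior separates from Uchiyama's classical $L^p$ compactness characterization.
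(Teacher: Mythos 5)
Your proposal goes in a genuinely different and, unfortunately, much harder direction than what the paper actually does. You assume that a nontrivial \emph{sequence} $\{f_k\}$ is needed and therefore set up an elaborate multi-bump construction $b = \sum_j c_j\varphi(\cdot - y_j)$, after which you must fight the decay $c_j\to 0$ forced by Lemma~\ref{CMO}~(3). You correctly identify this tension as the crux, but you do not resolve it, so the proposal is incomplete as written. The paper's proof sidesteps all of this by working with a \emph{single} pair $(b,f)$.

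Concretely, the paper takes $n=1$, $T=H$ the Hilbert transform, $b\in C_c^\infty(\mathbb{R})$ a single bump with $b\equiv 1$ on $(-1,1)$ and $b\equiv 0$ off $(-2,2)$, and $f=-\chi_{(-1,1)}$. Then for $|x|>2$ a direct computation gives
\begin{equation*}
[b,H]f(x)=\int_{-1}^{1}\frac{dy}{x-y}=\log\Bigl(1+\frac{2}{x-1}\Bigr)\sim\frac{2}{x}\quad\text{as }|x|\to\infty.
\end{equation*}
The $\sim 1/|x|$ decay is exactly the borderline behavior for $L^{1,\infty}(\mathbb{R})$: one checks by explicit level-set computation that $\|[b,H]f\,\chi_{\{|x|>A\}}\|_{L^{1,\infty}}=4$ for \emph{every} $A>2$. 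Thus $[b,H]f$ itself violates condition~(iii) of Theorem~\ref{WLP} (it has no absolutely continuous quasi-norm in $L^{1,\infty}$), and since $f$ is a fixed function in $L\log L(\mathbb{R})$, the image of the unit ball under $[b,H]$ contains an element with a persistent weak-$L^1$ tail, so it cannot be relatively compact.

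The observation you are missing is that a commutator of a single $C_c^\infty$ bump with the Hilbert transform applied to an indicator already produces a function with a $1/|x|$ tail, and $1/|x|$ is precisely the threshold where condition~(iii) of Theorem~\ref{WLP} fails. No sequence, no infinite sum of shifted bumps, no delicate choice of $c_j$ is required. Your instinct in the first paragraph --- that translating $f$ off the support of $b$ gives $\|[b,T]f_k\|_{L^{1,\infty}}\to 0$ --- is correct, but the conclusion you drew (that one must therefore construct a more elaborate sequence) was wrong: the obstruction to compactness lives in the tail of a single, easily computed commutator, not in the limiting behavior of a family.
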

\begin{proof}
Without loss of generality, we only deal with $n = 1$ and $T=H$, where $H$ is the Hilbert transform
$$H(f)(x)=p.v.\int_{\mathbb{R}}\frac{f(y)}{x-y}dy.$$
Let $f=-\chi_{(-1,1)}$ and $b\in C^{\infty}_{c}(\mathbb{R})$ such that
\begin{equation*}
  b(x)= \left\{
   \begin{array}{ll}\vspace{1ex}
0, & |x|> 2,\\
1, & |x|< 1.
   \end{array}
 \right.
\end{equation*}
Then $f\in L\log L(\mathbb{R})$ and $b\in {\rm CMO}(\mathbb{R})$. For any $|x|>2$, we have
\begin{equation}\label{C1}
\begin{aligned}
[b,H][b,H](f)(x)&=\int_{\mathbb{R}}\frac{b(x)-b(y)}{x-y}f(y)dy
=\int_{\mathbb{R}}\frac{0-1}{x-y}f(y)dy\\
&=\int_{-1}^{1}\frac{1}{x-y}dy=\log \big(1+\frac{2}{x-1}\big).
\end{aligned}
\end{equation}
By \eqref{C1}, we conclude that for any $A>2$ and $0<\lambda<\min \{\log \big(1+\frac{2}{A-1}\big), -\log \big(1-\frac{2}{A+1}\big)\}$,
\begin{equation}\label{6p-1}
\begin{aligned}
&\Big|\{x\in \mathbb{R}: |[b,H](f)(x)\chi_{E_{A}}(x)|>\lambda\}\Big|\\
&=\Big|\{x\in \mathbb{R}: |\log \big(1+\frac{2}{x-1}\big)|\chi_{E_{A}}(x)>\lambda\}\Big|\\
&=\Big|\{x>A: \log \big(1+\frac{2}{x-1}\big)>\lambda\}\Big|\\
&\qquad+\big|\{x<-A: -\log \big(1+\frac{2}{x-1}\big)>\lambda\}\Big|.
\end{aligned}
\end{equation}
By a direct computation, we obtain
\begin{equation}\label{6p-2}
\begin{aligned}
&\Big|\{x>A: \log \big(1+\frac{2}{x-1}\big)>\lambda\}\Big|\\
&=\Big|\{x\in \mathbb{R}: A<x<\frac{2}{e^{\lambda}-1}+1\}\Big|\\
&=\frac{2}{e^{\lambda}-1}+1-A.
\end{aligned}
\end{equation}
and
\begin{equation}\label{6p-3}
\begin{aligned}
&\Big|\{x<-A: -\log \big(1+\frac{2}{x-1}\big)>\lambda\}\Big|\\
&=\Big|\{x\in \mathbb{R}: \frac{2}{e^{-\lambda}-1}+1<x<-A\}\Big|\\
&=-A-\frac{2}{e^{-\lambda}-1}-1.
\end{aligned}
\end{equation}
The inequalities \eqref{6p-1}-\eqref{6p-3} imply that
\begin{equation*}
\begin{aligned}
\|[b,H](f)\chi_{E_{A}}\|_{L^{1,\infty}(\mathbb{R}^n)}&=\sup_{\lambda>0}\lambda\Big|\{x\in \mathbb{R}: |[b,H](f)(x)\chi_{E_{A}}(x)|>\lambda\}\Big|\\
&=\sup_{\lambda>0}\lambda \Big(\frac{2}{e^{\lambda}-1}-\frac{2}{e^{-\lambda}-1}-2A\Big)\\
&=\lim_{\lambda\rightarrow 0^+}\lambda \Big(\frac{2}{e^{\lambda}-1}-\frac{2}{e^{-\lambda}-1}-2A\Big)\\
&=4\neq 0,
\end{aligned}
\end{equation*}
By the fact that the functions $\frac{\lambda}{e^{\lambda}-1}$ and $-\frac{\lambda}{e^{-\lambda}-1}$ are monotonically decreasing, and
$$\lim_{\lambda\rightarrow 0^+}\frac{\lambda}{e^{\lambda}-1}=\lim_{\lambda\rightarrow 0^+}\frac{1}{e^{\lambda}}=1, \lim_{\lambda\rightarrow 0^+} \frac{\lambda}{e^{-\lambda}-1}=-1,$$
we conclude that for any $A>2$,
$$\|[b,H](f)\chi_{E_{A}}\|_{L^{1,\infty}(\mathbb{R})}=4,$$
then $[b,H](f)$ does not satisfy the condition \eqref{WLP3} in Lemma \ref{WLP}. Thus, $[b,H](f)$ is not a compact operator from $L\log L(\mathbb{R})$ to $L^{1,\infty}(\mathbb{R})$.
\end{proof}

\begin{proposition}\label{C-E-3}
There exists a function $b\in {\rm CMO}(\mathbb{R}^n)$ such that $[b,T]$ is not a compact operator from $H^{1}(\mathbb{R}^n)$ to $L^{1,\infty}(\mathbb{R}^n)$.
\end{proposition}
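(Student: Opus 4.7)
The plan is to mirror the construction in Proposition \ref{C-E}, reducing to $n=1$ and $T=H$ the Hilbert transform. The essential difference between the $L\log L$ and $H^{1}$ settings is that $H^{1}$-atoms carry a vanishing moment, so the non-cancelling choice $f=-\chi_{(-1,1)}$ used there is no longer available. A naive analogue of the Proposition \ref{C-E} computation with a cancelling $f$ would yield $[b,H](f)(x)=O(1/x^{2})$, whose $L^{1,\infty}$ tail does vanish at infinity. I would compensate for the cancellation of $f$ by choosing $b$ with a suitable \emph{linear} behaviour near the origin, forcing the product $bf$ to have a nonzero first moment.

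Concretely, I would take $b\in C_{c}^{\infty}(\mathbb{R})\subset \mathrm{CMO}(\mathbb{R})$ with $b(x)=x$ on $[-1,1]$ and $\mathrm{supp}(b)\subset[-2,2]$, and set $f=\chi_{(0,1)}-\chi_{(-1,0)}$. Since $f/2$ is an $H^{1}$-atom, one has $\|f\|_{H^{1}(\mathbb{R})}<\infty$. For $|x|>2$ one has $b(x)=0$, so $[b,H](f)(x)=-H(bf)(x)$ with $bf(y)=|y|\chi_{(-1,1)}(y)$. A direct integration would give
\[
H(bf)(x)=x\log\!\left(\frac{x^{2}}{x^{2}-1}\right)=\frac{1}{x}+O(|x|^{-3})\quad\text{as } |x|\to\infty,
\]
and in particular $|[b,H](f)(x)|\sim 1/|x|$ for $|x|$ large.

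To close the argument I would mimic the $L^{1,\infty}$ tail computation at the end of Proposition \ref{C-E}: the function $1/|x|$ satisfies $\|(1/|x|)\chi_{|x|>A}\|_{L^{1,\infty}(\mathbb{R})}\geq c>0$ uniformly in $A$, so $\|[b,H](f)\chi_{E_{A}}\|_{L^{1,\infty}(\mathbb{R})}\not\to 0$ as $A\to\infty$. This violates condition \eqref{WLP3} of Theorem \ref{WLP}, and by the same reasoning as in Proposition \ref{C-E} the operator $[b,H]$ cannot be compact from $H^{1}(\mathbb{R})$ to $L^{1,\infty}(\mathbb{R})$.

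The main obstacle, and the only essentially new point relative to Proposition \ref{C-E}, is precisely the cancellation of $H^{1}$-atoms, which kills the slow $1/|x|$ decay that drove the earlier counterexample. The choice $b(x)=x$ on $\mathrm{supp}(f)$ circumvents this by ensuring $\int bf\neq 0$, thereby restoring the critical $1/x$ asymptotic of $[b,H](f)$ which is responsible for the non-vanishing $L^{1,\infty}$ tail.
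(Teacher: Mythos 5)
Your proposal is correct and follows essentially the same strategy as the paper: reduce to $n=1$ and the Hilbert transform, pick a smooth compactly supported $b$ and a simple $H^{1}$-function $f$ with $\int bf\neq 0$ so that $[b,H]f$ has a $1/|x|$ tail, and conclude via condition (iii) of Theorem \ref{WLP}. The paper uses $f(x)=-x(\chi_{(-1,-1/2)}+\chi_{(1/2,1)})$ with $b(y)=y^{-1}$ on $\{1/2<|y|<1\}$ while you use $f=\chi_{(0,1)}-\chi_{(-1,0)}$ with $b(y)=y$ on $[-1,1]$, but this is only a cosmetic difference (note also that the quantity forced nonzero is the zeroth moment $\int bf$, not the first moment).
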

\begin{proof}
Without loss of generality, we only deal with $n = 1$ and $T=H.$

Let $f(x)=-x\big(\chi_{(-1,-\frac{1}{2})}(x)+\chi_{(\frac{1}{2},1)}(x)\big)$ and $b\in C^{\infty}_{c}(\mathbb{R})$ such that
\begin{equation*}
  b(x)= \left\{
   \begin{array}{ll}\vspace{1ex}
0, & |x|> 2,\\
x^{-1}, & \frac{1}{2}<|x|< 1,\\
0, & |x|<\frac{1}{4}.
   \end{array}
 \right.
\end{equation*}
Then $f\in H^{1}(\mathbb{R})\backslash  H^{1}_{b}(\mathbb{R}) $ and $b\in {\rm CMO}(\mathbb{R})$. For any $|x|>2$, we have
\begin{equation*}\label{C3}
\begin{aligned}
[b,H][b,H](f)(x)&=\int_{\mathbb{R}}\frac{b(x)-b(y)}{x-y}f(y)dy=\int_{\mathbb{R}}\frac{0-1/y}{x-y}f(y)dy\\
&=\int_{-1}^{-1/2}\frac{1}{x-y}dy+\int_{1/2}^{1}\frac{1}{x-y}dy\\
&=\log \big(1+\frac{2x}{2x^{2}-x-1}\big).
\end{aligned}
\end{equation*}
It is easy to see that $\frac{t}{2}<\log (1+t)$ for any $t>1$. Therefore,
$$\log \big(1+\frac{2x}{2x^{2}-x-1}\big)>\frac{x}{2x^{2}-x-1}>\frac{1}{x},$$
when $x>2$. It implies that for any $A>2$,
\begin{equation*}
\begin{aligned}
\|[b,H](f)\chi_{E_{A}}\|_{L^{1,\infty}(\mathbb{R}^n)}&>\|\frac{1}{(\cdot)}\chi_{E_{A^{+}}}\|_{L^{1,\infty}(\mathbb{R}^n)}\\
&=\sup_{\lambda>0}\lambda\Big|\{x\in \mathbb{R}: A<x<\frac{1}{\lambda}\}\Big|\\
&=1,
\end{aligned}
\end{equation*}
where $A^+:=\{x\in \mathbb{R}:x>A\}$. So, the condition \eqref{WLP3} in Lemma \ref{WLP} is not satisfied. We conclude that $[b,H](f)$ is not a compact operator from $H^{1}(\mathbb{R})$ to $L^{1,\infty}(\mathbb{R})$.
\end{proof}

In the case of classical Banach function spaces, the Minkowski-type inequality follows by the application
of associate space; see \cite[Lemma 4.2]{CGB}. Now, we would like to give a proof for
the Minkowski-type inequality for weak Lebesgue spaces, .

\begin{proposition}\label{M-WLP}
(Minkowski-type inequality.) Let $1<p<\infty$. Suppose that $f$ is a nonnegative measurable function on $\mathbb{R}^n\times \mathbb{R}^n$ with $\int_{\mathbb{R}^n}\|f(x,\cdot)\|_{L^{p,\infty}(\mathbb{R}^n)}dx<\infty$. Then
$$\Big\|\int_{\mathbb{R}^n}|f(x,\cdot)|dx\Big\|_{L^{p,\infty}(\mathbb{R}^n)}\leq C\int_{\mathbb{R}^n}\|f(x,\cdot)\|_{L^{p,\infty}(\mathbb{R}^n)}dx.$$
\end{proposition}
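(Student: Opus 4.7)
The plan is to reduce the inequality to the ordinary Minkowski inequality by replacing the quasi-norm $\|\cdot\|_{L^{p,\infty}(\mathbb{R}^n)}$ with an equivalent genuine norm, namely
$$\|g\|_{*}:=\sup_{0<|E|<\infty}|E|^{-1/p'}\int_{E}|g(y)|\,dy,$$
where the supremum is taken over Lebesgue measurable sets $E\subset\mathbb{R}^n$ of finite positive measure. This functional is manifestly sublinear in $g$, so Fubini yields Minkowski for free; the real work is in the two-sided comparison with $\|\cdot\|_{L^{p,\infty}}$.

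For the lower bound $\|g\|_{L^{p,\infty}(\mathbb{R}^n)}\le\|g\|_{*}$ I would test against the super-level set $E_{\lambda}=\{|g|>\lambda\}$: provided $|E_{\lambda}|<\infty$, the trivial estimate $\lambda|E_{\lambda}|\le\int_{E_{\lambda}}|g|\,dy\le|E_{\lambda}|^{1/p'}\|g\|_{*}$ gives $\lambda|E_{\lambda}|^{1/p}\le\|g\|_{*}$, and the case $|E_{\lambda}|=\infty$ forces $\|g\|_{*}=\infty$ as well. For the upper bound $\|g\|_{*}\le\frac{p}{p-1}\|g\|_{L^{p,\infty}}$ I would apply the layer-cake representation
$$\int_{E}|g(y)|\,dy=\int_{0}^{T}|E\cap\{|g|>t\}|\,dt+\int_{T}^{\infty}|E\cap\{|g|>t\}|\,dt\le T|E|+\|g\|_{L^{p,\infty}}^{p}\int_{T}^{\infty}t^{-p}\,dt,$$
then optimize in $T$ by taking $T=\|g\|_{L^{p,\infty}}|E|^{-1/p}$ to produce the constant $p/(p-1)$. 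This is exactly the standard Kolmogorov inequality cited in the main text, and it is the only analytic input.

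Once the equivalence $\|g\|_{L^{p,\infty}}\approx\|g\|_{*}$ is in hand, the inequality is immediate. For each set $E$ of finite positive measure, Tonelli's theorem applied to the nonnegative integrand $f(x,y)\chi_{E}(y)$ gives
$$|E|^{-1/p'}\int_{E}\Bigl(\int_{\mathbb{R}^n}f(x,y)\,dx\Bigr)dy=\int_{\mathbb{R}^n}|E|^{-1/p'}\int_{E}f(x,y)\,dy\,dx\le\int_{\mathbb{R}^n}\|f(x,\cdot)\|_{*}\,dx,$$
and taking the supremum over $E$ on the left-hand side produces $\|\int_{\mathbb{R}^n}f(x,\cdot)\,dx\|_{*}$. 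Combining with the two comparison inequalities then yields
$$\Bigl\|\int_{\mathbb{R}^n}f(x,\cdot)\,dx\Bigr\|_{L^{p,\infty}(\mathbb{R}^n)}\le\Bigl\|\int_{\mathbb{R}^n}f(x,\cdot)\,dx\Bigr\|_{*}\le\int_{\mathbb{R}^n}\|f(x,\cdot)\|_{*}\,dx\le\frac{p}{p-1}\int_{\mathbb{R}^n}\|f(x,\cdot)\|_{L^{p,\infty}(\mathbb{R}^n)}\,dx,$$
which is the desired estimate with $C=p/(p-1)$.

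The only mildly delicate point is the Kolmogorov bound $\|g\|_{*}\lesssim\|g\|_{L^{p,\infty}}$, since it is exactly where the hypothesis $p>1$ enters (the integral $\int_{T}^{\infty}t^{-p}\,dt$ diverges at $p=1$). This is also the reason the result fails for $p=1$, so one should flag it clearly. Everything else is an exercise in Fubini and the sublinearity of the supremum.
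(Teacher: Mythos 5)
Your argument is correct, and it differs slightly from the paper's in implementation though not in spirit. The paper proves the estimate by introducing an auxiliary exponent $q$ with $1<q<p$, applying the classical $L^q$-Minkowski inequality over an arbitrary bounded set $E$, and then bounding $\bigl(\int_E |f(x,\cdot)|^q\bigr)^{1/q}\lesssim |E|^{1/q-1/p}\|f(x,\cdot)\|_{L^{p,\infty}}$ via a layer-cake/Kolmogorov argument; finally it takes $E$ to be the super-level set of $\int f(x,\cdot)\,dx$ and rearranges. Your approach instead bypasses the intermediate $L^q$ and works with the $L^1$-based functional $\|g\|_*=\sup_{0<|E|<\infty}|E|^{-1/p'}\int_E|g|$, proves the two-sided comparison $\|g\|_{L^{p,\infty}}\le\|g\|_*\le\frac{p}{p-1}\|g\|_{L^{p,\infty}}$ (again by Kolmogorov on one side), and then observes that since $\|\cdot\|_*$ is a supremum of linear functionals it is sublinear, so Minkowski is immediate from Tonelli. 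Both arguments depend on the same analytic input (Kolmogorov's inequality, hence the restriction $p>1$), but your version is conceptually cleaner: it makes explicit the well-known fact that $L^{p,\infty}$ with $p>1$ is normable via $\|\cdot\|_*$, eliminates the extraneous parameter $q$, and yields an explicit constant $C=p/(p-1)$ where the paper only produces an unspecified $C$. The only small thing to add for completeness is to justify that the sets $E$ in the definition of $\|\cdot\|_*$ need only range over those of finite measure, and that the condition $|\{|g|>\lambda\}|=\infty$ indeed forces $\|g\|_*=\infty$ by exhausting the super-level set; you gesture at this, and it is fine to leave brief.
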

\begin{proof}
We first verify that for any bounded set $E\subset \mathbb{R}^n$ and $1<q<p<\infty$,
\begin{equation}\label{M1}
\begin{aligned}
\bigg(\int_{E}\bigg(\int_{\mathbb{R}^n}|f(x,y)|dx\bigg)^{q}dy\bigg)^{1/q}
&\leq\int_{\mathbb{R}^n}\bigg(\int_{E}|f(x,y)|^{q}dy\bigg)^{1/q}dx\\
&\lesssim |E|^{1/q-1/p}\int_{\mathbb{R}^n}\|f(x,\cdot)\|_{L^{p,\infty}(\mathbb{R}^n)}dx.
\end{aligned}
\end{equation}
Let $x\in \mathbb{R}^n$ be a fixed point. For any $\lambda>0$, we have
$$\lambda|\{y\in \mathbb{R}^n: |f(x,y)|>\lambda\}|^{1/p}\leq \|f(x,\cdot)\|_{L^{p,\infty}(\mathbb{R}^n)}.$$
Choose
$$N(x)=\|f(x,\cdot)\|_{L^{p,\infty}(\mathbb{R}^n)}\big(\frac{q}{p-q}\big)^{1/p}|E|^{-1/p},$$
For $1<q<p<\infty$, we conclude that
%$$E_{\lambda}=\big\{y\in E: |f(x,y)|>\lambda\big\}.$$
%Then
\begin{align*}
\int_{E}|f(x,y)|^{q}dy&=q\int_{0}^{\infty}\lambda^{q-1}\Big|\big\{y\in E: |f(x,y)|>\lambda\big\}\Big|d\lambda\\
&\leq q\int_{0}^{N(x)}\lambda^{q-1}|E|d\lambda +q\int_{N(x)}^{\infty}\lambda^{q-1}\frac{\|f(x,\cdot)\|^{p}_{L^{p,\infty}(\mathbb{R}^n)}}{\lambda^{p}}d\lambda\\
&=N(x)^{q}|E|+ \frac{q}{p-q}N(x)^{q-p}\|f(x,\cdot)\|^{p}_{L^{p,\infty}(\mathbb{R}^n)}.
\end{align*}
It shows that
\begin{align*}
\bigg(\int_{E}|f(x,y)|^{q}dy\bigg)^{1/q}&\leq 2\Big(\frac{q}{p-q}\Big)^{1/p}\|f(x,\cdot)\|^{p}_{L^{p,\infty}(\mathbb{R}^n)}|E|^{-1/p+1/q}.
\end{align*}
Therefore, the proof of the inequality \eqref{M1} is completed.

Now we return to our proof. For any $\lambda>0$, take $E=\{y\in \mathbb{R}^n: \int_{\mathbb{R}^n}|f(x,y)|dx>\lambda\}$, then $|E|<\infty.$ In fact, if $|E|=\infty$, there is a sequence $\{E_{k}\}$ of measurable sets such that $E_{k}\subset E$ and $|E_{k}|=k$ for $k=0,1,2,\cdots.$ Thus for every $k$, by \eqref{M1}, we get
\begin{align*}
\lambda^{q}k=\lambda^{q}|E_{k}|&\leq \int_{E_{k}}\bigg(\int_{\mathbb{R}^n}|f(x,y)|dx\bigg)^{q}dy\\
&\leq C |E_{k}|^{1-q/p}\bigg(\int_{\mathbb{R}^n}\|f(x,\cdot)\|_{L^{p,\infty}(\mathbb{R}^n)}dx\bigg)^{q}\\
&\leq Ck^{1-q/p}\bigg(\int_{\mathbb{R}^n}\|f(x,\cdot)\|_{L^{p,\infty}(\mathbb{R}^n)}dx\bigg)^{q}.
\end{align*}
However, it is not true.

It follows from the inequality \eqref{M1} that
\begin{align*}
\lambda^{q}|E|&\leq \int_{E}\bigg(\int_{\mathbb{R}^n}|f(x,y)|dx\bigg)^{q}dy\\
&\leq C |E|^{1-q/p}\bigg(\int_{\mathbb{R}^n}\|f(x,\cdot)\|_{L^{p,\infty}(\mathbb{R}^n)}dx\bigg)^{q}.
\end{align*}
Therefore,
$$\lambda|E|^{1/p}\leq C\int_{\mathbb{R}^n}\|f(x,\cdot)\|_{L^{p,\infty}(\mathbb{R}^n)}dx,$$
then
$$\Big\|\int_{\mathbb{R}^n}|f(x,\cdot)|dx\Big\|_{L^{p,\infty}(\mathbb{R}^n)}\leq C\int_{\mathbb{R}^n}\|f(x,\cdot)\|_{L^{p,\infty}(\mathbb{R}^n)}dx.$$
Therefore, this proof has been completed.
\end{proof}
\subsection*{Acknowledgements}
The authors extend heartfelt gratitude to Professor C. P\'{e}rez for introducing the problem pertaining to the endpoint theory for the boundedness of commutators.

\section*{Conflict of interests}
The authors declare that they have no conflict of interest.

\subsection*{Data availability}
Data sharing not applicable to this article as no data sets were generated or analysed during
the current study.

\subsection*{Declarations}
{\bf Conflict of interest} Conflict of interest no potential conflict of interests was reported by the authors.

\end{document}